\newenvironment{red}
{\relax\color{red}}
{\hspace*{.5ex}\relax}
\newcommand{\ber}{\begin{red}}
\newcommand{\er}{\end{red}}
\renewcommand{\sf}{\mathsf}
\newenvironment{verd}
{\relax\color{magenta}}
{\hspace*{.5ex}\relax}
\newcommand{\bg}{\begin{verd}}
\newcommand{\eg}{\end{verd}}
\theoremstyle{definition}
\newtheorem*{maintheorem}{Main Theorem}
\newtheorem{theorem}{Theorem}[section]
\newtheorem{R-theorem}[theorem]{The Riedtmann structure theorem}
\newtheorem{corollary}[theorem]{Corollary}
\newtheorem{lemma}[theorem]{Lemma}
\newtheorem{proposition}[theorem]{Proposition}
\newtheorem{definition}[theorem]{Definition}
\newtheorem{remark}[theorem]{Remark}
\def \para{\refstepcounter{theorem} \par\medskip\noindent
                \textbf{\thetheorem .} }
\theoremstyle{remark}
\numberwithin{equation}{theorem}
\renewcommand{\O}{\mathcal{O}}
\newcommand{\Z}{{\mathbb{Z}}}
\renewcommand{\k}{{\mathbf{k}}}
 \def\hsymb#1{\mbox{\strut\rlap{\smash{\Huge$#1$}}\quad}}
\begin{document}
\title[On the Heller lattices of Kronecker algebra]{On periodic stable Auslander--Reiten components containing Heller lattices over the symmetric Kronecker algebra}
\date{}

\author[Kengo Miyamoto]{Kengo Miyamoto}
\address{{Ibaraki University, 4-12-1 Nakanarusawa, Hitachi, Ibaraki, 316-8511, Japan.}}
\email{{}kengo.miyamoto.uz63@vc.ibaraki.ac.jp}

\keywords{almost split sequences, the stable Auslander--Reiten quiver, Heller lattices, tree classes}
\thanks{The author is a Research Fellow of Japan Society for the Promotion of Science (JSPS). 
This work was supported by Japan Society for the Promotion of Science KAKENHI 18J10561 {and 21K17702.}}
\subjclass[2010]{16G70 ; 16G30}

\begin{abstract} Let $\mathcal{O}$ be a complete discrete valuation ring, $\mathcal{K}$ its quotient field, and $A$ the symmetric Kronecker algebra over $\mathcal{O}$. We consider the full subcategory of the category of $A$-lattices whose objects are $A$-lattices $M$ such that $M\otimes_{\mathcal{O}}\mathcal{K}$ is projective $A\otimes_{\mathcal{O}}\mathcal{K}$-modules.  In this paper, we study Heller lattices of indecomposable periodic modules over {$A$}. 
As a main result,  we determine the shapes of stable Auslander--Reiten components containing Heller lattices of indecomposable periodic modules over  {$A$}. 
  \end{abstract}
\maketitle
\tableofcontents
\section*{Introduction}
In representation theory of algebras, we often use Auslander--Reiten theory to {analyze} various additive categories and prove many important combinatorial and homological properties with the help of the theory, for example, \cite{ARS, ASS, H, I3, Li2, Y}. 
In the case for the category of lattices over an order, see \cite{A2, Bu, I2, K2, RoS}. 

Let $\mathcal{O}$ be a complete discrete valuation ring with a uniformizer $\varepsilon$ and $\mathcal{K}$ the quotient field of $\mathcal{O}$. An $\mathcal{O}$-algebra $A$ is an \textit{$\mathcal{O}$-order} if $A$ is free of finite rank as an $\mathcal{O}$-module. We write $\overline{A}$ for the induced algebra $A\otimes_{\mathcal{O}}(\mathcal{O}/\varepsilon\mathcal{O})$. For an $\mathcal{O}$-order $A$, a right $A$-module $M$ is called an \textit{$A$-lattice} if $M$ is free of finite rank as an $\mathcal{O}$-module. We denote by $\mathsf{latt}$-$A$ the full subcategory of the module category $\mathsf{mod}$-$A$ consisting of $A$-lattices.
According to \cite{A2}, the category $\sf{latt}$-$A$ admits almost split sequences if and only if  $A$ is an isolated singularity, that is, $A\otimes_{\mathcal{O}}\mathcal{K}$ is a semisimple q$\mathcal{K}$-algebra. In this case, one can find some results on the shapes of Auslander--Reiten quivers, for example, \cite{K2, Lu, W}. 

When $A$ is not an isolated singularity, we have to consider a suitable full subcategory of $\sf{latt}$-$A$ which admits almost split sequences. It follows from \cite[Theorem 2.1]{AR} that $M\in\sf{latt}$-$A$ appears at the end term of an almost split sequence if and only if $M$ satisfies the condition $(\natural)$:
\begin{center}
$M\otimes_{\mathcal{O}}\mathcal{K}$ is projective as an $A\otimes_{\mathcal{O}}\mathcal{K}$-module.\quad\quad $(\natural)$
\end{center}
Here, the full subcategory of $\sf{latt}$-$A$ consisting of $A$-lattices which satisfy the condition $(\natural)$ is denoted by $\sf{latt}^{(\natural)}$-$A$. When $A$ is symmetric, that is, $A$ is isomorphic to $\mathrm{Hom}_{\mathcal{O}}(A,\mathcal{O})$ as $(A,A)$-bimodules, the category $\sf{latt}^{(\natural)}$-$A$ admits almost split sequences. Thus, Ariki, Kase, and the author defined the concept of the stable Auslander--Reiten quiver for $\sf{latt}^{(\natural)}$-$A$, and found several conditions to be satisfied on the shapes of stable periodic components with infinitely many vertices by using Riedtmann's structure theorem. As another restriction, the author proved that the tree class of any stable component is one of infinite Dynkin diagrams or Euclidean diagrams when $\overline{A}$ is of finite representation type \cite{M}. 
However, the shapes of stable components of an $\mathcal{O}$-order seems to be largely unknown, and there are only few concrete examples since it is difficult to compute almost split sequences in general.

Let $A$ be a symmetric $\mathcal{O}$-order. To get a new example of stable components for $\mathsf{latt}^{(\natural)}$-$A$, we consider a special kind of $A$-lattices called \textit{Heller lattices}, which is defined to be direct summands of the first syzygy of an indecomposable $\overline{A}$-module as an $A$-module. There are two reasons why we consider Heller lattices. The first reason is that they always belong to $\sf{latt}^{(\natural)}$-$A$. Thus, the category $\sf{latt}^{(\natural)}$-$A$ admits some stable components containing indecomposable Heller lattices. We call such components \textit{Heller components} of $A$. 
Another reason is that Heller lattices of a group algebra play important roles in modular representation theory.
For a $p$-modular system $(\mathcal{K},\mathcal{O},\kappa)$ for a finite group $G$, Heller lattices over $\mathcal{O}G$ were studied by Kawata \cite{K, K2}. It follows from \cite[Theorem 4.4]{K} that Heller lattices over $\mathcal{O}G$ provide us with a certain relationship between almost split sequences for $\mathsf{latt}$-$\mathcal{O}G$ and $\mathsf{mod}$-$\kappa G$, namely he showed that if $0\to A\to B\to Z_M\to 0$ is the almost split sequence ending at an indecomposable Heller lattice $Z_M$ of an indecomposable $\kappa G$-module $M$, then the induced exact sequence
\[ 0\to A\otimes_{\mathcal{O}}\kappa\to B\otimes_{\mathcal{O}}\kappa\to Z_M\otimes_{\mathcal{O}}\kappa\to 0\] 
is the direct sum of the almost split sequence ending at $M$ and a split sequence (see also \cite[Corollary 5.8]{P}).
They motivate us to study Heller lattices when $A$ is an arbitrary symmetric $\mathcal{O}$-order. In \cite{AKM}, we studied Heller lattices over truncated polynomial rings and determined the shapes of stable components containing indecomposable Heller lattices. This is the first example of a stable Auslander--Reiten component containing Heller lattices when $A\otimes_{\mathcal{O}}\mathcal{K}$ is not semisimple. 

In this paper, we consider the symmetric Kronecker algebra $A=\mathcal{O}[X, Y]/(X^2, Y^2)$. Then, the Auslander--Reiten quiver of $\overline{A}$ consists of a unique non-periodic component, which contains the simple $\overline{A}$-module, and infinitely many homogeneous tubes \cite{ARS, ASS}. In \cite{M}, I studied Heller lattices of indecomposable non-periodic $\overline{A}$-modules, and showed that $\sf{latt}^{(\natural)}$-$A$ admits a unique non-periodic Heller component containing them, and it is of the form $\mathbb{Z}A_{\infty}$. In this article, we focus on the remaining Heller lattices, and we will show that they are indecomposable. It is well-known that such homogeneous tubes are classified by the projective line $\mathbb{P}^1(\kappa)$ \cite{ARS, SS}. Hence, Heller lattices of indecomposable periodic $\overline{A}$-modules are parametrized by $\mathbb{Z}_{>0}\times\mathbb{P}^1(\kappa)$. We denote by $Z_m^{\lambda}$ the Heller lattice associated with $(m,\lambda)\in\mathbb{Z}_{>0}\times\mathbb{P}^1(\kappa)$.
The main result is the following.

\begin{maintheorem}[Theorems \ref{main1} and \ref{main2}]
Let $\mathcal{O}$ be a complete discrete valuation ring, $\kappa$ the residue field and $A=\mathcal{O}[X,Y]/(X^{2},Y^{2})$. Suppose that $\kappa$ is algebraically closed. Let $\mathcal{CH}(Z_{m}^{\lambda})$ be the stable Auslander--Reiten component for $\mathsf{latt}^{(\natural)}$-$A$ containing $Z_{m}^{\lambda}$. 
Then, the following statements hold.
\begin{enumerate}
\item Assume that $\sf{Char}(\kappa)=2$, then $\mathcal{CH}(Z_{m}^{\lambda})\simeq \mathbb{Z}A_{\infty}/\langle \tau \rangle$ for all $\lambda\in\mathbb{P}^1(\kappa)$.
\item Assume that $\sf{Char}(\kappa)\neq 2$, then 
\[ \mathcal{CH}(Z_{m}^{\lambda})\simeq \left\{\begin{array}{ll}
\mathbb{Z}A_{\infty}/\langle \tau \rangle &  \text{if $\lambda=0$ or $\infty$,}\\
\mathbb{Z}A_{\infty}/\langle \tau^2 \rangle & \text{otherwise.} \end{array}\right. \] 
\end{enumerate}
Moreover, the Heller lattice $Z_{m}^{\lambda}$ appears on the boundary of $\mathcal{CH}(Z_{m}^{\lambda})$.
\end{maintheorem}

This paper consists of four sections. In Section 1, we recall some notions, including almost split sequences, stable Auslander--Reiten quivers, and some results from \cite{A, AKM, Ri}. 
In Section 2, we give a complete list of Heller lattices of $A = \mathcal{O}[X, Y]/(X^2, Y^2)$ and explain their properties, including the indecomposability and the periodicity/aperiodicity. 
In Section 3, we consider the case $\lambda\neq \infty$ and determine the shape of the stable Auslander--Reiten component containing $Z_m^{\lambda}$. Moreover, we show that every Heller lattice $Z_m^{\lambda}$ appears on the boundary of the Heller component of $A$. Note that, using Riedtmann's structure theorem, it is not necessary to construct all almost split sequences to determine the shape of Heller components of $A$. In fact, we only construct the almost split sequences ending at $Z_m^{\lambda}$. Finally, we consider the case $\lambda=\infty$ in Section 4. 

\section*{Acknowledgement}
My heartfelt appreciation goes to Professor Susumu Ariki (Osaka University), who provided helpful comments and suggestions. I would also like to thank Professor Shigeto Kawata (Nagoya City University), Professor Michihisa Wakui (Kansai University), Professor Ryoichi Kase (Okayama University of Science), and Dr. Qi Wang ({Tsinghua University}) whose meticulous comments were an enormous help to me.


\section{Preliminaries}
Throughout this paper, we use the following conventions.
\begin{enumerate}[(1)]
\item {$\mathcal{O}$ denotes a complete discrete valuation ring with a uniformizer $\varepsilon$, and $\kappa$ is the residue field, and $\mathcal{K}$ is the quotient field.}
\item $\k$ is an algebraically closed field. 
\item All modules are right.  
For an algebra $\Lambda$, we denote by $\sf{mod}$-$\Lambda$ the category of finitely generated $\Lambda$-modules.
\item Tensor products are taken over $\mathcal{O}$. 
\item For an additive category $\mathscr{C}$, let $\underline{\mathscr{C}}$ be the projectively stable category of $\mathscr{C}$. 
\item The symbol $\delta_{i,j}$ means the Kronecker delta.
\item The identity matrix of size $n$ and the zero matrix of size $n$ are denoted by $\mathbf{1}_n$ and $\mathbf{0}_n$, respectively.
\end{enumerate}

\subsection{Orders and lattices}
First, we recall some terminologies on orders and lattices; for example, see \cite[Introduction, Section 1]{I}. 
An $\mathcal{O}$-algebra is called an \textit{$\mathcal{O}$-order} if it is free of finite rank as an $\mathcal{O}$-module. 
An $\mathcal{O}$-order $A$ is called \textit{Gorenstain} if {$\mathsf{Hom}_\mathcal{O}(A,\mathcal{O})$} is a projective $A$-module, and $A$ is said to be \textit{symmetric} if {$\mathsf{Hom}_\mathcal{O}(A,\mathcal{O})\simeq A$} as an $(A,A)$-bimodule. 
For an $\mathcal{O}$-order $A$, an $A$-module $M$ is called an \textit{$A$-lattice} if $M$ is free of finite rank as an $\mathcal{O}$-module. 
We write $\sf{latt}$-$A$ for the full subcategory of $\mathsf{mod}$-$A$ consisting of $A$-lattices. 
{For every indecomposable $A$-lattice $X$, the endomorphism algebra $\mathsf{End}_A(X)$ is a local $\mathcal{O}$-algebra \cite[(6.10) and (30.5)]{CR}.
Moreover, $\sf{latt}$-$A$ has enough projective \cite[(6.23)]{CR}.
}
Then, we define $\sf{latt}^{(\natural)}$-$A$ to be the full subcategory of $\sf{latt}$-$A$ consisting of any $A$-lattice $M$ such that $M\otimes\mathcal{K}$ is projective as an $A\otimes\mathcal{K}$-module. 
By definition, the category $\sf{latt}^{(\natural)}$-$A$ is enough projective and closed under direct summands. In addition, if $A$ is symmetric, the category $\sf{latt}^{(\natural)}$-$A$ is closed under extension.
In this paper,  we write $\overline{A}$ for the finite-dimensional algebra $A\otimes\kappa$. 
The syzygy functors on $\underline{\sf{latt}}$-$A$ and $\underline{\sf{mod}}$-$\overline{A}$ are denoted by $\Omega$ and $\widetilde{\Omega}$, respectively.  

Clearly, the following lemma holds.
\begin{lemma} Let $R$ be an $\mathcal{O}$-order and $M$ an indecomposable $R$-lattice. If $f\in\mathrm{End}_{R}(M)$ is surjective, then $f$ is an isomorphism. Moreover, the set of non-surjective endomorphisms of $M$ coincides with the radical of the endomorphism ring of $M$.
 \end{lemma}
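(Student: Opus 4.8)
The statement to prove is: for an $\mathcal{O}$-order $R$ and an indecomposable $R$-lattice $M$, a surjective $f \in \operatorname{End}_R(M)$ is an isomorphism, and the non-surjective endomorphisms form the radical of $\operatorname{End}_R(M)$.

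The plan is as follows. First I would establish that $\operatorname{End}_R(M)$ is a local ring. Since $M$ is an $R$-lattice, it is free of finite rank as an $\mathcal{O}$-module, so $\operatorname{End}_R(M)$ is a subring of $\operatorname{End}_{\mathcal{O}}(M)$, hence free of finite rank over $\mathcal{O}$; in particular it is $\varepsilon$-adically complete and semiperfect, so idempotents behave well. Because $M$ is indecomposable, $\operatorname{End}_R(M)$ has no nontrivial idempotents, and a semiperfect ring with no nontrivial idempotents is local. Consequently the radical $J := \operatorname{Rad}\operatorname{End}_R(M)$ consists exactly of the non-units, and $\operatorname{End}_R(M)/J$ is a division ring. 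This is the structural input; the rest is identifying non-units with non-surjections.

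Next I would show a surjective $f$ is an isomorphism. Given $f$ surjective, consider the descending chain $M \supseteq fM \supseteq f^2M \supseteq \cdots$; since $f$ is surjective each $f^nM = M$, which alone says nothing, so instead I would look at the kernel: the ascending chain $\operatorname{Ker} f \subseteq \operatorname{Ker} f^2 \subseteq \cdots$ of $\mathcal{O}$-submodules of $M$ stabilizes because $M$ is a Noetherian $\mathcal{O}$-module (finite rank free over the Noetherian ring $\mathcal{O}$). So $\operatorname{Ker} f^n = \operatorname{Ker} f^{n+1}$ for some $n$; combined with surjectivity of $f^n$, the standard Fitting-type argument gives $\operatorname{Ker} f^n \cap \operatorname{Im} f^n = 0$ and $\operatorname{Ker} f^n + \operatorname{Im} f^n = M$, i.e. $M = \operatorname{Ker} f^n \oplus \operatorname{Im} f^n = \operatorname{Ker} f^n \oplus M$. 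Indecomposability of $M$ then forces $\operatorname{Ker} f^n = 0$, hence $\operatorname{Ker} f = 0$, so $f$ is injective and bijective, i.e. an isomorphism. (Alternatively: a surjective $f$ that is not injective would make $\operatorname{Ker} f$ a nonzero summand by the same Fitting argument applied inside the local ring; either phrasing works.)

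Finally I would identify the radical with the non-surjective endomorphisms. Since $\operatorname{End}_R(M)$ is local, $J$ is precisely the set of non-units. By the previous paragraph, every surjective endomorphism is an isomorphism, hence a unit; so every element of $J$ is non-surjective, giving $J \subseteq \{\text{non-surjective } f\}$. Conversely, if $f$ is a unit then $f$ is an isomorphism and in particular surjective; contrapositively, any non-surjective $f$ is a non-unit, hence lies in $J$. Therefore $J = \{f \in \operatorname{End}_R(M) : f \text{ is not surjective}\}$, which is the second claim. I do not expect a serious obstacle here: the only point needing slight care is justifying that $\operatorname{End}_R(M)$ is semiperfect/local — this rests on completeness of $\mathcal{O}$ and finiteness of $\operatorname{End}_R(M)$ as an $\mathcal{O}$-module, both immediate from the definition of an $R$-lattice — and the Fitting argument, which is routine once Noetherianity of $M$ over $\mathcal{O}$ is invoked.
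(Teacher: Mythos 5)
Your proof is correct. Note, though, that the paper offers no argument at all here; it introduces the lemma with the words ``Clearly, the following lemma holds,'' so there is no proof in the source to compare against — you are supplying what the author elided.

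Two small observations on your argument. First, the claim that a surjective $f\in\operatorname{End}_R(M)$ is an isomorphism does not actually require indecomposability: since $M$ is free of finite rank over the discrete valuation ring $\mathcal{O}$, a surjective $\mathcal{O}$-linear endomorphism of $M$ has unit determinant (or, more generally, a surjective endomorphism of a finitely generated module over a commutative Noetherian ring is automatically injective), hence is bijective. Your Fitting-style argument is also fine — once $\operatorname{Ker} f^n = \operatorname{Ker} f^{2n}$ and $\operatorname{Im} f^n = M$, the intersection computation alone already yields $\operatorname{Ker} f^n = 0$, so you do not even need to form the direct sum decomposition or invoke indecomposability at that point. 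Second, your structural input — that $\operatorname{End}_R(M)$ is module-finite over the complete local ring $\mathcal{O}$, hence semiperfect, and therefore local because $M$ is indecomposable — is exactly what is needed for the radical characterization, and you use it correctly: in a local ring the Jacobson radical is the set of non-units, and your first part identifies the units with the surjections. The proof stands as written.
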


\subsection{Valued stable translation quivers}

In this subsection, we recall notations on stable translation quivers.
A quiver $Q=(Q_0, Q_1,s,t)$ is a quadruple consisting of two sets $Q_0$ and $Q_1$, and two maps $s,t:Q_1\to Q_0$. 
Each element of $Q_0$ and $Q_1$ is called a vertex and an arrow, respectively. 
For an arrow $\alpha\in Q_1$, we call $s(\alpha)$ and $t(\alpha)$ the source and the target of $\alpha$, respectively. 
We write $\overline{Q}$ for the underlying graph of $Q$. 
Given two quivers $Q$ and $\Delta$, a quiver homomorphism $f:Q\to \Delta$ is a pair of maps $f_0: Q_0\to \Delta _0$ and $f_1:Q_1\to \Delta_1$ such that $(s\times t)\circ f_1=(f_0\times f_0)\circ (s\times t)$. 
From now on, we assume that quivers have no multiple arrows; that is, the map $(s\times t)$ is injective.

Let $(Q,v)$ be a pair of a quiver $Q$ and a map $v:Q_{1}\rightarrow \Z_{\geq 0}\times \Z_{\geq 0}$. For an arrow $x\to y$ of $Q$, we write $v(x\to y)=(d_{xy}, d_{xy}')$, and we understand that there is no arrow from $x$ to $y$ if and only if $d_{xy}=d_{xy}'=0$. 
Then, $(Q,v)$ is called a \textit{valued quiver} if $d_{x,y}=0$ if and only if $d_{x,y}'=0$, and the values of the map $v$ are called \textit{valuations}. If $v(x\to y)=(1,1)$ for all arrows $x\to y$ of $Q$, then $v$ is said to be \textit{trivial}. 
We usually omit {writing} trivial valuations. 
For each vertex $x\in Q_0$, we set
\[ x^{+}=\{y\in Q_0\ |\ x\to y \in Q_1\},\quad x^{-}=\{y\in Q_0\ |\  y\to x\in Q_1\}. \]
Note that a quiver is determined by the sets $x^{+}$. 
A quiver $Q$ is \textit{locally finite} if $x^{+}\cup x^{-}$ is a finite set for any $x\in Q_0$. 
A \textit{translation quiver} is a triple $(Q,Q_0',\tau)$ of a locally finite quiver $Q$, a subset $Q_0'\subset Q_0$ and an injective map $\tau:Q_0'\to Q_0$ satisfying $x^{-}=(\tau x)^{+}$. If $Q_0'=Q_0$ and $\tau$ is bijective, the translation quiver is said to be \textit{stable}. 
Then, we simply write $(Q,\tau)$ for the stable translation quiver.
Let $\mathcal{C}$ be a full subquiver of a stable translation quiver $(Q,\tau)$. Then, $\mathcal{C}$ is a (connected) \textit{component} if the following three conditions are satisfied.
\begin{enumerate}[(i)]
\item $\mathcal{C}$ is stable under the quiver automorphism $\tau$.
\item $\mathcal{C}$ is a disjoint union of connected components of the underlying undirected graph.
\item  There is no proper subquiver of $\mathcal{C}$ that satisfies (i) and (ii).
\end{enumerate}

A quiver homomorphism $f$ from a translation quiver $(Q,Q_0',\tau)$ to a translation quiver $(\Delta,\Delta_0',\tau')$ is a \textit{translation quiver homomorphism} if $f_0\circ \tau=\tau'\circ f_0$ is satisfied on $Q_0'$. It is easily seen that $\tau$ induces a translation quiver automorphism when $(Q,Q_0',\tau)$ is stable, and we use the same letter $\tau$. In this paper, we denote by $\mathsf{Aut}_{\tau}(Q)$ the set of all translation quiver automorphisms of $(Q,\tau)$.
Let $Q$ and $\Delta$ be two stable translation quivers. A surjective translation quiver homomorphism $f:Q\to \Delta$ is a \textit{covering} if $f|_{x^{+}}$ gives a bijection between $x^{+}$ and $(f(x))^+$.

For a stable translation quiver $(Q,\tau)$ and a subgroup $G\subset \sf{Aut}_\tau(Q)$, we define the translation quiver homomorphism $\pi_G:Q\to Q/G$ by $\pi_G(x)=Gx$ for $x\in Q_0$.
A subgroup $G\subset\sf{Aut}_\tau(Q)$ is \textit{admissible} if each $G$-orbit intersects $x^+\cup\{x\}$ in at most one vertex and $x^-\cup\{x\}$ in at most one vertex, for any $x\in Q_0$. Then, the map $\pi_G$ is covering.

\begin{definition}
A \textit{valued stable translation quiver} is a triple $(Q, v, \tau)$ such that
\begin{enumerate}[(i)]
\item $(Q,v)$ is a valued quiver, 
\item $(Q,\tau)$ is a stable translation quiver,
\item  $v(\tau y\to x)=(d_{xy}',d_{xy})$ for each arrow $x\to y$.
\end{enumerate}
\end{definition}

Given a valued quiver $(Q, v)$, one can construct the valued stable translation quiver $(\Z Q,\tilde{v},\tau)$ as follows.
\begin{itemize}
\item  $(\Z Q)_0: = \Z\times Q_0$.
\item $(n,x)^+:=\{(n, y)\ |\ y\in x^{+}\}\cup\{(n-1,z)\ |\ z\in x^{-}\}$.
\item $\tilde{v}((n,x)\to (n,y))=(d_{xy},d_{xy}')$, $\tilde{v}((n-1,y)\to (n,x))=(d_{xy}',d_{xy}).$
\item $\tau((n,x))=(n-1,x)$.
\end{itemize}
We write it simply $\Z Q$. Note that $\Z Q$ has no loops whenever $Q$ has no loops. The following theorem is well-known, and it is effective to describe the structure of stable translation quivers; for example, {see \cite[Section 1, p.p. 206]{Ri} or \cite[Theorem 4.15.6]{B}.}

\begin{theorem}[Riedtmann's structure theorm]\label{Ried}
Let $(Q,\tau)$ be a stable translation quiver without loops and $\mathcal{C}$ a connected component of $(Q,\tau)$. Then, there exist a directed tree $T$ and an admissible group $G\subseteq \mathsf{Aut}(\Z T)$ such that $\mathcal{C}\simeq \Z T/G$ as stable translation quivers. Moreover, $\overline{T}$ is uniquely determined by $\mathcal{C}$, and the admissible group is unique up to conjugation.
\end{theorem}
In Theorem \ref{Ried}, the underlying undirected tree $\overline{T}$ is called the \textit{tree class} of $\mathcal{C}$.  

Let $(Q,\tau)$ be a connected stable translation quiver. A vertex $x\in Q_0$ is called \textit{periodic} if $x=\tau ^k x$ for some $k>0$, where $\tau^k$ is the composition of $k$ copies of $\tau$ \cite[p.p. 287]{HPR}.
It is well-known that if there is a periodic vertex in $Q$, then all vertices of $Q$ are periodic. 
Indeed, if $x$ is a periodic vertex in $Q$, then there is a positive integer {$n_x$} such that $\tau^{n_x}x=x$. Since $(Q,\tau)$ is a stable translation quiver, $\tau^{n_x}$ induces a permutation on the finite set $x^{+}$, and so some power of $\tau^{n_x}$ stabilizes $x^{+}$ elementwise. Hence, all vertices in $x^{+}$ are periodic. It follows that all vertices are periodic. In this case, $(Q,v,\tau)$ is called \textit{periodic}. 

A connected valued quiver $(Q,v)$ which has no loops gives rise to a Cartan matrix on $Q_0$: 
\[ C(x,y)=\left\{\begin{array}{ll}
2 & \text{if $x=y$},\\
-d_{x,y}' & \text{if there is an arrow $x\to y$},\\
-d_{y,x} & \text{if there is an arrow $y\to x$},\\
0 & \text{otherwise.}\end{array}\right.\]

\begin{definition}\label{def of subadditive} Let $C$ be a Cartan matrix on $I$. A \textit{subadditive function} for $C$ is a function $\ell: I\to \mathbb{Q}_{>0}$ such that it satisfies
\[ \sum_{y\in I}C(x,y)\ell(y)\geq 0 \]
for all  $x\in I$. A subadditive function $\ell$ is called \textit{additive} if the equality holds for all $x\in I$. We say that a connected valued quiver $Q$ admits a subadditive function when there exists a subadditive function for a Cartan matrix on $Q_0$.
\end{definition}

\begin{remark} Let $(Q,v,\tau)$ be a connected valued translation quiver without loops, and let $\overline{T}$ be the tree class of $Q$. If a function $\ell: Q_0\to \mathbb{Q}_{>0}$ satisfies $\ell(\tau x)=\ell(x)$ and
\[2\ell (x)\ge \sum_{y\to x \text{ in } T} d_{yx}\ell (y)+\sum_{x\to y \text{ in }T} d'_{xy}\ell (y), \]
then the restriction $\ell|_{T}$ is a subadditive function for a Cartan matrix on $\overline{T}_0$. 
\end{remark}

The following result is well-known.
\begin{theorem}[{\cite[Theorem 4.5.8]{B}}]\label{tree class}
Let $(\Delta,v)$ be a connected valued quiver without loops. If $\Delta$ admits a subadditive function $\ell$,
then the following statements hold.
\begin{enumerate}[(1)]
\item The underlying undirected graph $\overline{\Delta}$ is either a finite or infinite Dynkin diagram or a Euclidean diagram.
\item If $\ell$ is not additive, then $\overline{\Delta}$ is either a finite Dynkin diagram or $A_{\infty}$.
\item If $\ell$ is additive, then $\overline{\Delta}$ is either an infinite Dynkin diagram or a Euclidean diagram.
\item If $\ell$ is unbounded, then $\overline{\Delta}$ is $A_{\infty}$.
\end{enumerate}
\end{theorem}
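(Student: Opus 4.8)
The plan is to translate the hypothesis into linear algebra over the Cartan matrix $C$ attached to $(\Delta,v)$ on $I=\Delta_0$, and then to bootstrap from the finite case by an exhaustion argument. Writing $C\ell$ for the vector with $x$-entry $\sum_{y\in I}C(x,y)\ell(y)$, a subadditive function is precisely an $\ell\in\mathbb{Q}_{>0}^{I}$ with $C\ell\ge 0$ entrywise, and it is additive exactly when $C\ell=0$; note that condition (ii) in the definition of a Cartan matrix forces $\Delta$ to be locally finite. First I would dispose of the case $I$ finite. Then $C$ is an indecomposable symmetrizable generalized Cartan matrix admitting a strictly positive vector $\ell$ with $C\ell\ge 0$, so by Vinberg's theorem (equivalently, by the classification of generalized Cartan matrices of finite and of affine type) $C$ is of finite or of affine type, i.e. $\overline{\Delta}$ is a finite Dynkin diagram or a Euclidean diagram. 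Pairing $C\ell$ against the (strictly positive) null root of a Euclidean diagram forces $C\ell=0$, so on a Euclidean diagram every subadditive function is additive; on a finite Dynkin diagram $C$ is positive definite, hence invertible, so $C\ell\ne0$ and $\ell$ is never additive. This settles (1)--(3) when $I$ is finite, and (4) is vacuous there since $\ell$ is bounded.

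Now suppose $I$ is infinite. Fix $x_{0}\in I$ and let $\Delta_{n}$ be the full subquiver of $\Delta$ on the vertices at distance $\le n$ from $x_{0}$; by local finiteness each $\Delta_{n}$ is finite and connected, $\Delta=\bigcup_{n}\Delta_{n}$, and the inclusions $\Delta_{n}\subseteq\Delta_{n+1}$ are strict for infinitely many $n$. Because $C(x,y)\le0$ for $x\ne y$ and $\ell>0$, deleting the vertices outside $\Delta_{n}$ can only increase each defect $\sum_{y}C(x,y)\ell(y)$, so $\ell|_{\Delta_{n}}$ is subadditive for the Cartan matrix of $\Delta_{n}$; by the finite case every $\overline{\Delta_{n}}$ is a finite Dynkin or a Euclidean diagram. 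Since no finite Dynkin or Euclidean diagram properly contains a connected Euclidean full subdiagram, the $\overline{\Delta_{n}}$ cannot be Euclidean from some stage on, and it follows that \emph{every} finite connected full subquiver of $\Delta$ is a finite Dynkin diagram.

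The key step is then the purely combinatorial fact that a connected valued graph all of whose finite connected full subgraphs are finite Dynkin diagrams must be one of $A_{\infty}$, $A_{\infty}^{\infty}$, $B_{\infty}$, $C_{\infty}$, $D_{\infty}$, i.e. an infinite Dynkin diagram; this proves (1) when $I$ is infinite. Concretely, forbidding Euclidean full subquivers forbids cycles ($\widetilde{A}_{n}$), a vertex of degree $\ge4$ ($\widetilde{D}_{4}$), two branch vertices ($\widetilde{D}_{n}$), and a branch vertex whose three arm-lengths are not of the shape $(1,1,c)$ (producing $\widetilde{E}_{6}$, $\widetilde{E}_{7}$ or $\widetilde{E}_{8}$); moreover, since $\Delta$ is infinite and connected, any nontrivial valuation other than a single $(1,2)$- or $(2,1)$-bond at the extremal edge of a ray would extend to $\widetilde{F}_{4}$ or $\widetilde{G}_{2}$. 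A short case analysis of the surviving shapes leaves exactly the five graphs above. Finally, for (2) and (4) I would run an elementary monotonicity argument: on each of $A_{\infty}^{\infty}$, $B_{\infty}$, $C_{\infty}$, $D_{\infty}$ subadditivity makes the successive increments of $\ell$ along every ray nonincreasing and nonpositive starting from the closed end, hence $\ell$ is constant along each ray and therefore globally additive, in particular bounded; so a non-additive (resp. unbounded) subadditive function can exist only on a finite Dynkin diagram or on $A_{\infty}$, where the constant function (resp. the function growing by $1$ at each step along the ray) witnesses the possibility. The main obstacle is this last combinatorial classification, with the fiddly bookkeeping at branch points and multiple bonds; everything else is a routine translation of standard facts about generalized Cartan matrices.
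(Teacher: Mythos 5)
The paper does not include a proof of this theorem; it is quoted directly from Happel, Preiser and Ringel, so there is no in-text argument to compare against. Your reconstruction is correct and follows the standard route to this result: dispose of the finite case via the Vinberg--Kac trichotomy for indecomposable generalized Cartan matrices together with the classification of finite and affine types, and handle the infinite case by exhausting $\Delta$ by balls $\Delta_n$ around a base vertex. The two pivotal observations --- that restricting $\ell$ to a full subquiver can only increase each defect $\sum_y C(x,y)\ell(y)$ because the off-diagonal entries of $C$ are nonpositive, and that neither a finite- nor an affine-type Cartan matrix admits a proper connected affine principal submatrix --- are exactly right and correctly rule out a Euclidean $\Delta_n$, forcing every finite connected full subquiver of $\Delta$ to be a finite Dynkin diagram.

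A few phrasings deserve tightening, though they do not affect the outcome. ``Nonpositive starting from the closed end'' does not literally apply to $A_\infty^\infty$, which has no closed end; there the nonincreasing increments would drive $\ell$ to $-\infty$ in one direction or the other unless they all vanish. For $D_\infty$ the closed end is the branch vertex $c$, and you need the leaf inequalities $\ell(c)\le 2\ell(a)$ and $\ell(c)\le 2\ell(b)$ together with subadditivity at $c$ to force the first increment along the ray to be $\le 0$, after which the monotonicity argument applies and back-substitution gives $\ell(a)=\ell(b)=\tfrac12\ell(c)$ and additivity. ``A single $(1,2)$- or $(2,1)$-bond at the extremal edge of a ray'' slightly overcounts: on $D_\infty$ a valued leaf edge already produces $\widetilde{BD}_3$ or $\widetilde{CD}_3$ as a four-vertex full subgraph, so nontrivial valuations survive only at the closed end of a branch-free one-sided ray, yielding $B_\infty$ or $C_\infty$. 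Finally, ``$C$ is positive definite'' on a finite Dynkin diagram is accurate only after symmetrization; what is actually needed is that $C$ is invertible, which holds because finite-type generalized Cartan matrices are symmetrizable with positive-definite symmetrization. These are cosmetic; once filled in, the proof is complete.
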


\subsection{The stable AR quiver for the category of lattices over an $\mathcal{O}$-order}
In this subsection, {we recall how to compute almost split sequences and stable Auslander--Reiten components,} see \cite{AKM} for details. 
{Let $A$ be a Gorenstain $\mathcal{O}$-order such that $A\otimes\mathcal{K}$ is self-injective.
Then, the category $\sf{latt}^{(\natural)}$-$A$ admits almost split sequences \cite[Theorems 2.1, and 2.2]{AR}.
We denote by $\tau$ the Auslander--Reiten translation on $\sf{latt}^{(\natural)}$-$A$.}

It is natural to ask how we compute almost split sequences. 
\begin{proposition}[{\cite[Proposition 1.15]{AKM}}]\label{AKM}
Let $A$ be a Gorenstein $\mathcal{O}$-order, $M$ an indecomposable $A$-lattice in $\sf{latt}^{(\natural)}$-$A$, and let $p:P\to M$ be the projective cover of $M$. Given an endomorphism $\varphi : M \to M$, we obtain the pullback diagram along $\nu(p)$ and $\varphi$:
$$\begin{xy}
(0,15)*[o]+{0}="01",(20,15)*[o]+{\mathrm{Ker}(\nu(p))}="L",(40,15)*[o]+{E}="E", (60,15)*[o]+{M}="M",(80,15)*[o]+{0}="02",
(0,0)*[o]+{0}="03",(20,0)*[o]+{\mathrm{Ker}(\nu(p))}="L2",(40,0)*[o]+{\nu(P)}="nP", (60,0)*[o]+{\nu(M)}="nM",(80,0)*[o]+{0}="04",
\ar "01";"L"
\ar "L";"E"
\ar "E";"M"
\ar "M";"02"
\ar "03";"L2"
\ar "L2";"nP"
\ar "nP";"nM"_{\nu(p)}
\ar "nM";"04"
\ar @{-}@<0.5mm>"L";"L2"
\ar @{-}@<-0.5mm>"L";"L2"
\ar "E";"nP"
\ar "M";"nM"^{\varphi}
\end{xy}$$
Here, $\nu=\sf{D}(\mathrm{Hom}_{A}(-,A))$ is the Nalayama functor.
Then, the following statements are equivalent.
\begin{enumerate}[(1)]
\item The upper short exact sequence is the almost split sequence ending at $M$.
\item The following three conditions hold.
\begin{enumerate}[(i)]
\item The morphism $\varphi$ does not factor through $\nu(p)$.
\item $\mathrm{Ker}(\nu (p))$ is an indecomposable $A$-lattice.
\item For all $f\in \mathsf{rad}\mathsf{End}_A(M)$, the morphism $\varphi \circ f$ factors through $\nu(p)$.
\end{enumerate} 
\end{enumerate}
Moreover, any almost split sequence ending at $M$ is given in this way.
\end{proposition}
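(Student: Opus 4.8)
The plan is to realise the class of the upper short exact sequence in $\Ext^{1}_{A}(M,L)$ as the image of $\varphi$ under a connecting homomorphism, and then to run the standard Auslander--Reiten arguments inside $\sf{latt}^{(\natural)}$-$A$.

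First I would turn the lower row into a usable short exact sequence. The lattice $\nu(P)=\sf{D}(\Hom_{A}(P,A))$ is an injective object of $\sf{latt}$-$A$, being the $\sf{D}$-dual of the projective left lattice $\Hom_{A}(P,A)$; and applying $\Hom_{A}(-,A)$ and then $\sf{D}$ to $0\to\Omega M\to P\xrightarrow{p}M\to0$ shows that $\nu(p)$ is an epimorphism whose kernel $L$ equals $\nu(\Omega M)$ (the term $\sf{D}\Ext^{1}_{A}(M,A)$ that a priori intervenes vanishes, since $\Ext^{1}_{A}(M,A)$ is $\mathcal{O}$-torsion by condition $(\natural)$). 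Tensoring with $\K$, the epimorphism $\nu(P)\otimes\K\to\nu(M)\otimes\K$ of projective $A\otimes\K$-modules splits, so $L$ again lies in $\sf{latt}^{(\natural)}$-$A$. Now apply $\Hom_{A}(M,-)$ to $0\to L\to\nu(P)\xrightarrow{\nu(p)}\nu(M)\to0$: since $\Ext^{1}_{A}(M,\nu(P))=0$, the connecting map $\partial\colon\Hom_{A}(M,\nu(M))\to\Ext^{1}_{A}(M,L)$ is surjective, its kernel is the set of morphisms factoring through $\nu(p)$, and $\partial(\psi)$ is exactly the class of the pullback sequence $\eta_{\psi}$. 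Consequently $\eta_{\varphi}$ splits iff $\varphi$ factors through $\nu(p)$; every class in $\Ext^{1}_{A}(M,L)$ has the form $[\eta_{\psi}]$; and, since pullbacks compose, pulling $\eta_{\varphi}$ back along $f\in\End_{A}(M)$ gives $\eta_{\varphi\circ f}$, so that $[\eta_{\varphi\circ f}]=[\eta_{\varphi}]\cdot f$ for the right $\End_{A}(M)$-action on $\Ext^{1}_{A}(M,L)$.

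With this in hand the implication $(1)\Rightarrow(2)$ is formal: an almost split sequence is non-split with indecomposable end terms, so $[\eta_{\varphi}]\ne0$ (hence $\varphi$ does not factor through $\nu(p)$) and $L$ is indecomposable; and since $\End_{A}(M)$ is local, every $f\in\mathrm{rad}\End_{A}(M)$ is a non-retraction, so it factors through the right almost split map $E\to M$, which forces $\eta_{\varphi\circ f}$ to split, i.e. $\varphi\circ f$ to factor through $\nu(p)$. For $(2)\Rightarrow(1)$ and the final sentence I would invoke the Auslander--Reiten theory underlying Theorem~\ref{AR}: $\sf{latt}^{(\natural)}$-$A$ has almost split sequences; the translate of a non-projective indecomposable is $\tau M\cong\nu(\Omega M)=L$; and the Auslander--Reiten formula says that the $\End_{A}(M)$-submodule of $\Ext^{1}_{A}(M,L)$ annihilated by $\mathrm{rad}\End_{A}(M)$ is simple and is generated by the class $[\delta]$ of the almost split sequence $\delta\colon0\to L\to E'\to M\to0$ ending at $M$. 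Condition (ii) forces $M$ to be non-projective (otherwise $L=0$), so $\delta$ exists; by (iii) and the identity above, $[\eta_{\varphi}]$ is annihilated by $\mathrm{rad}\End_{A}(M)$, hence lies in that submodule, and by (i) it is non-zero, so $[\eta_{\varphi}]=[\delta]\cdot u$ for some unit $u\in\End_{A}(M)$; pulling $\delta$ back along $u$ then exhibits $\eta_{\varphi}$ as a short exact sequence isomorphic to $\delta$, i.e. as the almost split sequence ending at $M$. For the last sentence, any almost split sequence ending at $M$ has first term $\cong L$, hence class $[\eta_{\varphi}]$ for a suitable $\varphi$ by surjectivity of $\partial$, and that $\varphi$ satisfies (i)--(iii) by $(1)\Rightarrow(2)$.

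Apart from the routine homological algebra, the real content---and the step I expect to be the main obstacle---is the two structural inputs used above: the $\mathrm{D}\mathrm{Tr}$-type identification $\tau M\cong\nu(\Omega M)$ for Gorenstein $\mathcal{O}$-orders, and the Auslander--Reiten formula yielding the simplicity of the $\mathrm{rad}\End_{A}(M)$-annihilated submodule of $\Ext^{1}_{A}(M,\tau M)$ over the local $\mathcal{O}$-algebra $\End_{A}(M)$; one has to be a little careful here, since $\End_{A}(M)$ need not be split over $\kappa$ and the duality relevant to the Auslander--Reiten formula is $\Hom_{\mathcal{O}}(-,\K/\mathcal{O})$ rather than $\sf{D}$. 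The remaining points are bookkeeping: existence of almost split sequences is quoted from Theorem~\ref{AR}, and one only needs to check that $L$, $\nu(P)$ and the middle term stay inside $\sf{latt}^{(\natural)}$-$A$.
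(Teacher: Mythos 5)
The statement you were asked to prove is imported verbatim from [AKM, Proposition~1.15]; the present paper does not prove it but simply cites it, so there is no in-paper argument to compare against. Judged on its own terms, your sketch follows the standard route and is essentially correct: realize the pullback classes as the image of a connecting map $\partial\colon\Hom_A(M,\nu(M))\to\Ext^1_A(M,L)$ with kernel the maps factoring through $\nu(p)$, identify $L$ with $\tau M$, and reduce the equivalence $(1)\Leftrightarrow(2)$ to the fact that the almost split class spans the simple socle of $\Ext^1_A(M,\tau M)$ under the right $\End_A(M)$-action, with the existence of almost split sequences in $\sf{latt}^{(\natural)}$-$A$ supplied by Theorem~\ref{AR}. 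You also correctly flag the two structural inputs (the $\mathsf{D}\mathrm{Tr}$-type identification $\tau M\cong\nu(\Omega M)$ and the Auslander--Reiten duality with $\Hom_{\mathcal O}(-,\mathcal K/\mathcal O)$) as the genuine content.

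One justification is imprecise and worth fixing. To get $\Ker(\nu(p))\cong\nu(\Omega M)$ you dualize the four-term sequence $0\to M^\ast\to P^\ast\to(\Omega M)^\ast\to\Ext^1_A(M,A)\to 0$, and you argue the extra term disappears because $\Ext^1_A(M,A)$ is $\mathcal O$-torsion (hence $\mathsf D\,\Ext^1_A(M,A)=0$). That alone is not enough: splitting the four-term sequence at $C:=\mathrm{Coker}(p^\ast)$ and applying $\mathsf D$ to $0\to C\to(\Omega M)^\ast\to\Ext^1_A(M,A)\to 0$ gives $0\to\nu(\Omega M)\to\mathsf D(C)\to\Ext^1_{\mathcal O}(\Ext^1_A(M,A),\mathcal O)\to 0$, and for a nonzero finitely generated torsion $\mathcal O$-module the $\Ext^1_{\mathcal O}(-,\mathcal O)$-dual is again nonzero, so torsion-ness does not kill the discrepancy. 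What you actually need, and what holds, is the outright vanishing $\Ext^1_A(M,A)=0$: since $A$ is Gorenstein, $A\cong\mathsf D(\mathsf D(A))$ is an injective object of $\sf{latt}$-$A$, and as projective resolutions of lattices stay in $\sf{latt}$-$A$ this forces $\Ext^i_A(M,A)=0$ for all $i\ge1$ and all $A$-lattices $M$, independently of condition $(\natural)$. With that replacement your argument is clean.
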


\begin{remark}[{\cite[Chapter I, Section 2]{H}}]
As $A$ is a Gorenstein $\mathcal{O}$-order, the Nakayama functor $\nu:\sf{latt}$-$A\to\sf{latt}$-$A$ is an autofunctor, and $\underline{\sf{latt}}$-$A$ is a Frobenius category. Hence, $\underline{\sf{latt}}\text{-}A$ is a triangulated category with the shift functor $\Omega^{-1}$.  Then, we have a triangulated equivalence $\nu:\underline{\sf{latt}}\text{-}A\xrightarrow{\sim}\underline{\sf{latt}}\text{-}A$, and the Auslander--Reiten translation $\tau$ is represented by $\Omega\nu$.
\end{remark}

\begin{definition} Let $A$ be an $\mathcal{O}$-order and $M$ be an indecomposable $\overline{A}$-module. We call each direct summand of $\Omega(M)$ \textit{a Heller lattice} of $M$. Note that $\Omega(M)$ may not be an indecomposable $A$-lattice.
\end{definition}

{It is easy to check that any Heller lattices belong to $\sf{latt}^{(\natural)}$-$A$ (see, \cite[Remark 1.12]{AKM}).}

The following proposition is used in this paper everywhere.
\begin{proposition}[{\cite[Proposition 4.5]{K}}]\label{split}
Let $A$ be an $\O$-order and $L$ an indecomposable $A$-lattice, and let
\[ 0\rightarrow \tau L \rightarrow  E \rightarrow L\rightarrow 0 \]
be the almost split sequence {ending} at $L$. Assume that $L$ is not a direct summand of any Heller lattices. Then, the induced exact sequence  
\[ 0\rightarrow \tau L\otimes\kappa \rightarrow E \otimes\kappa \rightarrow L\otimes\kappa\rightarrow 0 \]  
splits.
\end{proposition}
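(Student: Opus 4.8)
The sequence $0\to\tau L\otimes\kappa\to E\otimes\kappa\to L\otimes\kappa\to 0$ is exact because $L$ is $\mathcal{O}$-free, so $\mathrm{Tor}_1^{\mathcal{O}}(L,\kappa)=0$; the plan is to show it is moreover split, by comparing the almost split sequence $\eta:0\to\tau L\xrightarrow{u}E\xrightarrow{g}L\to 0$ with the syzygy sequence of $L$ and reducing modulo $\varepsilon$. Since $L$ is indecomposable and non-projective (it is the end term of an almost split sequence), the projective cover $p:P\to L$ in $\mathsf{latt}$-$A$ is not a retraction, hence factors through the right almost split map $g$, say $p=gh$ with $h:P\to E$. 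Restricting $h$ to kernels yields a morphism $\omega:\Omega L\to\tau L$ with $h\iota=u\omega$ (where $\iota:\Omega L\hookrightarrow P$), that is, a morphism of short exact sequences from the syzygy sequence $\xi:0\to\Omega L\xrightarrow{\iota}P\xrightarrow{p}L\to 0$ to $\eta$ which is the identity on $L$; equivalently $\eta$ is the pushout of $\xi$ along $\omega$. As $A$ is symmetric we have $\tau\simeq\Omega$, so after fixing an isomorphism $\tau L\cong\Omega L$ we regard $\omega$ as an element of $\mathrm{End}_A(\Omega L)$, well defined in $\underline{\mathrm{End}}_A(\Omega L)$. (For a general Gorenstein order one argues the same way with $\Omega$ replaced by $\Omega\nu$.)

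Next I reduce modulo $\varepsilon$. Because the reduction of a projective cover is a projective cover, the reduction of $\xi$ is the syzygy sequence $0\to\widetilde{\Omega}\overline{L}\to\overline{P}\to\overline{L}\to 0$ of $\overline{L}$ over $\overline{A}$ (in particular $\overline{\Omega L}\cong\widetilde{\Omega}\overline{L}$), and the reduced diagram exhibits $\overline{\eta}:0\to\overline{\tau L}\to\overline{E}\to\overline{L}\to 0$ as the pushout of this syzygy sequence along $\overline{\omega}\in\mathrm{End}_{\overline{A}}(\widetilde{\Omega}\overline{L})$. Since $\overline{A}$ is self-injective, $\mathrm{Ext}^1_{\overline{A}}(\overline{L},\widetilde{\Omega}\overline{L})\cong\underline{\mathrm{End}}_{\overline{A}}(\widetilde{\Omega}\overline{L})$, and under this isomorphism the class of $\overline{\eta}$ corresponds to the class of $\overline{\omega}$. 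Hence the proposition is equivalent to the assertion that $\overline{\omega}$ factors through a projective $\overline{A}$-module.

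It remains to prove this using the hypothesis, and here is where the Heller lattices enter. One first checks that $\omega$ is not surjective: as $\eta$ is almost split, its class lies in the socle of $\mathrm{Ext}^1_A(L,\tau L)\cong\underline{\mathrm{End}}_A(\Omega L)$ as a module over $\underline{\mathrm{End}}_A(L)$, so $\omega$ lies in the radical of $\mathrm{End}_A(\Omega L)$ (note $\Omega L$ is indecomposable, being the syzygy, with respect to a projective cover, of an indecomposable non-projective lattice), and a surjective endomorphism of an indecomposable $\mathcal{O}$-order lattice is an isomorphism; the degenerate possibility that $\omega$ is an isomorphism, which forces $E\cong P$ and makes $\overline{\eta}$ the syzygy sequence of $\overline{L}$, is exactly the situation in which $L$ is already forced to be a Heller-lattice summand. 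Thus $M:=\mathrm{coker}(\omega)\neq 0$; from $p=gh$ one computes $\mathrm{coker}(h)\cong M$ and, via the snake lemma applied to $\xi\to\eta$, a short exact sequence $0\to\mathrm{im}(\omega)\to\mathrm{im}(h)\to L\to 0$ in which $\mathrm{im}(h)=P/\ker(\omega)$ is a quotient of the projective lattice $P$. Using in addition that $\varepsilon\cdot[\eta]=0$ in $\mathrm{Ext}^1_A(L,\tau L)$ (because $\varepsilon\cdot\mathrm{id}_L\in\mathrm{rad}\,\mathrm{End}_A(L)$), equivalently that there is $s:L\to E$ with $gs=\varepsilon\cdot\mathrm{id}_L$, one controls the $\varepsilon$-divisible and $\varepsilon$-torsion parts of $M$ and shows that, were $\overline{\omega}$ not to factor through a projective $\overline{A}$-module, the data above would exhibit $L$ as a direct summand of $\Omega_A(N)$ for a suitable indecomposable $\overline{A}$-module $N$ built from $M$ — contradicting that $L$ is not a direct summand of any Heller lattice. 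Hence $\overline{\omega}$ factors through a projective, the class of $\overline{\eta}$ vanishes, and $\overline{\eta}$ splits.

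The routine parts are the exactness of the reduction, the construction of the comparison morphism $\omega$, and the identification of the class of $\overline{\eta}$ with $\overline{\omega}$ via the self-injectivity of $\overline{A}$. The main obstacle is the last step: converting a hypothetical non-vanishing of $\overline{\omega}$ in $\underline{\mathrm{End}}_{\overline{A}}(\widetilde{\Omega}\overline{L})$ into an explicit short exact sequence $0\to L\oplus L'\to(\text{projective lattice})\to N\to 0$ with $\varepsilon N=0$; this needs a careful analysis of $M=\mathrm{coker}(\omega)$ — isolating its $\varepsilon$-torsion via the auxiliary splitting $s$ and checking that $\mathrm{im}(h)$ furnishes a projective presentation realizing the required syzygy — together with the bookkeeping around the identification $\tau L\cong\Omega L$ in the general Gorenstein case.
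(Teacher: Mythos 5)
The framework you set up is sound as far as it goes: $\eta$ is the pushout of the syzygy sequence $\xi$ along a comparison map $\omega\colon\Omega L\to\tau L$, reduction mod $\varepsilon$ is exact on lattices, and $\overline{\eta}$ splits if and only if $\overline{\omega}$ factors through a projective $\overline{A}$-module (using self-injectivity of $\overline{A}$ and the identification of $\mathrm{Ext}^1_{\overline{A}}(\overline L,\widetilde\Omega\overline L)$ with $\underline{\End}_{\overline A}(\widetilde\Omega\overline L)$). But that reformulation is the easy half; the actual content of the proposition is the deduction of this factorization from the hypothesis that $L$ is not a direct summand of any Heller lattice, and this step is not carried out. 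You say so yourself (``the main obstacle is the last step''), and the plan you sketch — analyse the $\varepsilon$-torsion and $\varepsilon$-divisibility of $M=\mathrm{coker}(\omega)$, then build from $M$ and $\mathrm{im}(h)=P/\ker\omega$ an indecomposable $\overline{A}$-module $N$ with $L$ a summand of $\Omega_A(N)$ — is left entirely programmatic, and I do not see how it would go through: $\mathrm{coker}(\omega)$ has no reason to be an $\overline A$-module, and the passage from a nonzero class $\overline\omega\in\underline{\End}_{\overline A}(\widetilde\Omega\overline L)$ to an explicit Heller lattice containing $L$ as a summand is precisely what needs proof. Two intermediate assertions are also unjustified: that $[\eta]$ lying in the socle of $\mathrm{Ext}^1_A(L,\tau L)$ as a $\underline{\End}_A(L)$-module forces $\omega\in\mathrm{rad}\,\End_A(\Omega L)$ (being annihilated by the radical on one side does not make $\omega$ a non-unit), and that $\omega$ being an isomorphism ``forces $L$ to be a Heller-lattice summand'' — this is asserted with no argument, and the AR sequence having projective middle term does not by itself have anything to do with Heller lattices.

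The way the hypothesis actually enters (this is essentially Kawata's argument, which the paper invokes rather than reproves) is not through the single map $\omega$ attached to $L$'s own syzygy, but through the Heller lattices of the indecomposable summands of $\overline{L}$. For an indecomposable summand $W$ of $\overline{L}$, take the projective $A$-cover $Q_W\twoheadrightarrow W$, so $\Omega_A(W)=\ker(Q_W\to W)$ is a Heller lattice. Lift $Q_W\to W\hookrightarrow\overline{L}$ through $L\twoheadrightarrow\overline{L}$ to $\tilde w\colon Q_W\to L$; since $\tilde w$ kills $\Omega_A(W)$ modulo $\varepsilon L$, one gets $\phi:=\varepsilon^{-1}\tilde w|_{\Omega_A(W)}\colon\Omega_A(W)\to L$. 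The hypothesis is used exactly here: $\phi$ cannot be a retraction, or $L$ would be a direct summand of the Heller lattice $\Omega_A(W)$. Hence both $\tilde w$ and $\phi$ factor through the right almost split map $g\colon E\to L$, and a comparison of the two factorizations (together with $A\cap\varepsilon E=\varepsilon A$, where $A=\tau L$) lets one adjust the lift so that $W\hookrightarrow\overline L$ itself factors through $\overline g$. This is a genuinely different use of the hypothesis from what you propose, and it is the idea your sketch is missing; without it the proposal does not constitute a proof.
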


{Next lemma gives a connection between the first syzygies on $A$ and those on $\overline{A}$.}

\begin{lemma}\label{proj cover}
Suppose that $A$ is a symmetric $\mathcal{O}$-order. Then, for any non-projective $A$-lattice $M$, there is an isomorphism $\tau(M)\otimes\kappa\simeq \widetilde{\Omega}(M\otimes\kappa)$.
\end{lemma}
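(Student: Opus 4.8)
The plan is to compare two minimal projective presentations. First I would take $p\colon P\to M$ the projective cover in $\sf{latt}$-$A$. Since $A$ is symmetric, projective $A$-lattices are $\varepsilon$-torsion-free and $P\otimes\kappa$ is a projective $\overline{A}$-module; because reduction mod $\varepsilon$ is right exact and takes the projective cover $P\to M$ to an epimorphism $P\otimes\kappa\to M\otimes\kappa$ with superfluous kernel (the kernel lies inside $\Rad P\otimes\kappa=\Rad(P\otimes\kappa)$, using that $A$ is an $\mathcal O$-order so $\Rad P=\varepsilon P+P\Rad A$ reduces correctly), the map $P\otimes\kappa\to M\otimes\kappa$ is again a projective cover. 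Hence $\widetilde\Omega(M\otimes\kappa)=\Ker\bigl(P\otimes\kappa\to M\otimes\kappa\bigr)$.

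Next I would identify $\tau(M)$. By the Corollary proved above, $A$ symmetric gives $\tau\simeq\Omega$, so $\tau(M)=\Omega(M)=\Ker(p)$. It therefore suffices to show that the natural sequence obtained by applying $-\otimes\kappa$ to $0\to\Ker(p)\to P\xrightarrow{p}M\to 0$ stays exact, i.e. that $\mathrm{Tor}_1^{\mathcal O}(M,\kappa)=0$. This is immediate since $M$ is an $A$-lattice, hence $\mathcal O$-free, so $-\otimes\kappa$ applied to the short exact sequence of $\mathcal O$-modules is exact. Consequently we get a short exact sequence
\[ 0\to \Omega(M)\otimes\kappa \to P\otimes\kappa \to M\otimes\kappa\to 0, \]
and since $P\otimes\kappa\to M\otimes\kappa$ is a projective cover, its kernel is by definition $\widetilde\Omega(M\otimes\kappa)$. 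Combining, $\tau(M)\otimes\kappa=\Omega(M)\otimes\kappa\simeq\widetilde\Omega(M\otimes\kappa)$.

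The one genuinely delicate point — the part I would spell out most carefully — is the claim that $p\colon P\to M$ reduces to a \emph{projective cover} after $-\otimes\kappa$, rather than merely an epimorphism onto $M\otimes\kappa$ from a projective. The content is that $\Ker(p)\subseteq\Rad_A P$ (minimality of the projective cover in $\sf{latt}$-$A$) forces $\Ker(p)\otimes\kappa$, viewed inside $P\otimes\kappa$, to land in $\Rad_{\overline A}(P\otimes\kappa)$; here one uses that for an $\mathcal O$-order $\Rad A$ maps onto $\Rad\overline A$ and $\varepsilon\in\Rad A$, so $\Rad_A P\otimes\kappa$ surjects onto $\Rad_{\overline A}(P\otimes\kappa)$, whence the image of $\Ker(p)\otimes\kappa$ is superfluous. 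A clean alternative is simply to invoke $\tau\simeq\Omega$ together with the general fact (e.g. from \cite{AKM} or directly) that $\Omega$ of an $A$-lattice reduces mod $\varepsilon$ compatibly with $\widetilde\Omega$ on $\overline A$-modules; but I would prefer to give the short direct argument above since all the needed facts are already available in the excerpt.
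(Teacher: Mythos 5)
Your proposal is correct, and it differs from the paper only in how it establishes the central claim that the projective cover $p\colon P\to M$ reduces mod $\varepsilon$ to a projective cover $P\otimes\kappa\to M\otimes\kappa$. You do this directly: $\Ker(p)\subseteq\Rad_A P$, and since $\Rad(\overline A)=\Rad(A)/\varepsilon A$ (as $\varepsilon\in\Rad A$), the image of $\Ker(p)$ in $P\otimes\kappa$ lies in $\Rad_{\overline A}(P\otimes\kappa)$, so the kernel stays superfluous. The paper instead uses a rank-counting argument: it writes the projective cover of $M\otimes\kappa$ as $Q\otimes\kappa$ for some projective $A$-lattice $Q$, observes $\rank Q\le\rank P$ because $Q\otimes\kappa$ is a summand of $P\otimes\kappa$, lifts $Q\otimes\kappa\to M\otimes\kappa$ to a surjection $Q\to M$ by projectivity plus Nakayama so that $\rank P\le\rank Q$, and concludes $\rank P=\rank Q$, hence $P\otimes\kappa$ is already minimal. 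Both are short and correct; your kernel-is-superfluous argument is the more intrinsic and self-contained one and avoids needing to lift the projective cover of $M\otimes\kappa$ to a projective $A$-lattice $Q$, while the paper's rank comparison is perhaps slicker once one accepts that lifting step. The rest of your argument — $\tau\simeq\Omega$ from the Corollary, exactness of $-\otimes\kappa$ on lattices since $M$ is $\mathcal O$-free, hence $\Omega(M)\otimes\kappa$ identifies with the kernel of $P\otimes\kappa\to M\otimes\kappa$ which is $\widetilde\Omega(M\otimes\kappa)$ — matches the paper's conclusion exactly.
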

\begin{proof}
Let $M$ be an $A$-lattice and $\pi: P\to M$ the projective cover.
Let $Q\otimes\kappa\to M\otimes\kappa$ be the projective cover. Then $\mathrm{rank}\ Q\leq \mathrm{rank}\ P$. On the other hand, it lifts to $Q\to M$, and it is an epimorphism by Nakayama's lemma. Thus, we have $\mathrm{rank}\ Q=\mathrm{rank}\ P$ and $P\otimes\kappa$ is the projective cover of {$M\otimes\kappa$.}  Therefore, we have $\tau(M)\otimes\kappa\simeq \widetilde{\Omega}(M\otimes\kappa)$ as objects in the stable module category $\underline{\sf{mod}}$-$\overline{A}$. Since the functor $-\otimes\kappa$ is exact on $\sf{latt}$-$A$, the assertion follows.
\end{proof}

\begin{definition}\label{ARquiver} Let $A$ be a symmetric $\mathcal{O}$-order.
\begin{enumerate}[(1)]
\item The \textit{stable Auslander--Reiten quiver} for $\sf{latt}^{(\natural)}$-$A$ is the valued quiver defined as follows:
\begin{itemize}
\item The set of vertices is a complete set of isoclasses of non-projective indecomposable $A$-lattices in $\sf{latt}^{(\natural)}$-$A$.
\item We draw a valued arrow $M\xrightarrow{(a,b)}N$ whenever there exist irreducible morphisms $M\to N$, where the valuation $(a,b)$ means:
\begin{enumerate}[(i)]
\item $a$ is the multiplicity of $M$ in the middle term of the almost split sequence ending at $N$. 
\item $b$ is the multiplicity of $N$ in the middle term of the almost split sequence starting at $M$. 
\end{enumerate}
\end{itemize}
The stable Auslander--Reiten quiver for $\sf{latt}^{(\natural)}$-$A$ is denoted by $\Gamma_s(A)$ in this paper.
\item A component of $\Gamma_s(A)$ containing an indecomposable Heller lattice $Z$ is said to be a \textit{Heller component} of $A$, and denoted by $\mathcal{CH}(Z)$. 
\end{enumerate}
\end{definition}
 
By the definition, we note that a component $\mathcal{C}$ of $\Gamma_s(A)$ does not have multiple arrows, and $\tau M$ and $\tau^{-1}M$ exist for each vertex $M$ of $\mathcal{C}$ (\cite[Theorems 2.1, and 2.2]{AR}).
Thus, $(\mathcal{C},\tau)$ is a valued stable translation quiver. Note that there are possibilities that $\mathcal{C}$ has loops {\cite[Theorems 1 and 2]{W}.}
{However, if a component $\mathcal{C}$ with infinitely many vertices admits loops, then the shape of $\mathcal{C}$ is entirely determined.}

\begin{proposition}[{\cite[Lemma 1.21 and Theorem 1.27]{AKM19}}]\label{periodic_case}
Let $A$ be a symmetric $\O$-order and $\mathcal{C}$ a periodic component of $\Gamma_s(A)$. Assume that $\Gamma_s(A)$ has infinitely many vertices. Then, one of the following statements holds.
\begin{enumerate}[(1)]
\item If $\mathcal{C}$ has loops, then $\mathcal{C}\setminus\{\text{loops}\}=\mathbb{Z}A_\infty/\langle \tau \rangle$. Moreover, the loops only appear on the boundary of $\mathcal{C}$.
\item If $\mathcal{C}$ has no loops, then the tree class of $\mathcal{C}$ is one of infinite Dynkin diagrams.\end{enumerate}
\end{proposition}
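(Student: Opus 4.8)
I would derive both parts from Riedtmann's structure theorem (Theorem~\ref{Ried}) together with the subadditive function $\mathcal{R}$ constructed just above, treating the loopless and the loop-containing cases separately. The auxiliary observation that makes everything run is that a \emph{periodic} component is small over its tree: if $\mathcal{C}\simeq\Z T/G$ and $\tau^{k}$ acts as the identity on $\mathcal{C}$, then $\tau^{k}$ is a deck transformation of the regular covering $\pi_{G}$, hence lies in $G$, so $\mathcal{C}$ is a quotient of $\Z T/\langle\tau^{k}\rangle$, whose vertex set is $(\Z/k\Z)\times T_{0}$. Since $\mathcal{C}$ has infinitely many vertices, $T_{0}$ is infinite.

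\textbf{The loopless case, giving (2).} Assume $\mathcal{C}$ has no loops and write $\mathcal{C}\simeq\Z T/G$ by Theorem~\ref{Ried}. The computation carried out immediately before the statement shows that the $\tau$-invariant function $\mathcal{R}$ restricts to a subadditive function $\mathcal{R}|_{T}$ for the Cartan matrix of $\overline{T}$, so by Theorem~\ref{tree class}(1) the graph $\overline{T}$ is a finite or infinite Dynkin diagram or a Euclidean diagram. As $T_{0}$ is infinite and the finite Dynkin and Euclidean diagrams are finite graphs, $\overline{T}$ is an infinite Dynkin diagram.

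\textbf{The loop case, giving (1).} Now suppose $\mathcal{C}$ has a loop at a vertex $X$. I would first localize the loops. Since $\tau$ is a quiver automorphism of $\mathcal{C}$ it carries the loop at $X$ to one at $\tau X$; together with the axiom $X^{-}=(\tau X)^{+}$ this puts both $X$ and $\tau X$ among the indecomposable summands of the middle term $E$ of the almost split sequence $0\to\tau X\to E\to X\to 0$. Since $\rank(E)=\rank(\tau X)+\rank(X)$, the module $E$ equals $\tau X\oplus X$ exactly, so $X^{-}=\{\tau X,X\}$; applying the same at each $\tau^{i}X$ shows that, when $\tau X\neq X$, the full subquiver on the $\tau$-orbit of $X$ is a union of components of $\mathcal{C}$, hence — by connectedness — all of $\mathcal{C}$, which is then finite, a contradiction. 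So $\tau X=X$, and a parallel rank count forces the valuation of every loop to be trivial. Consequently $\mathcal{C}^{\circ}:=\mathcal{C}\setminus\{\text{loops}\}$ is again a connected, infinite, periodic, loopless stable translation quiver; the loopless case gives $\mathcal{C}^{\circ}\simeq\Z T/G$ with $\overline{T}$ an infinite Dynkin diagram. Redoing the rank bookkeeping while tracking the loop shows the inequality~(\ref{(1)}) underlying $\mathcal{R}|_{T}$ is \emph{strict} at $X$ — the loop summand of $E$ contributes positive rank but is not an edge of $T$ — so $\mathcal{R}|_{T}$ is not additive, and Theorem~\ref{tree class}(2) forces $\overline{T}=A_{\infty}$. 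As $\mathsf{Aut}_{\tau}(\Z A_{\infty})=\langle\tau\rangle$ we have $G=\langle\tau^{n}\rangle$; since $X$ is $\tau$-fixed while every $\tau$-orbit of $\Z A_{\infty}/\langle\tau^{n}\rangle$ has size $n$, this gives $n=1$ and $\mathcal{C}\setminus\{\text{loops}\}=\Z A_{\infty}/\langle\tau\rangle$. A short computation with the rank relations of this rank-one tube then shows that a loop on an interior vertex would force a lattice of non-positive rank two steps farther from the mouth; hence loops appear only on the boundary.

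\textbf{Main obstacle.} The delicate part is the loop case. The crux is showing that every loop sits at a $\tau$-fixed vertex — this is what makes loop-deletion again a translation quiver and what ultimately rules out every higher-rank tube — and this must be extracted from the translation-quiver axioms and the rank inequality directly, since Riedtmann's theorem does not see loops. A second, related point is quantifying the non-additivity of $\mathcal{R}$ caused by a loop, which is what isolates $A_{\infty}$ among the infinite Dynkin diagrams. The loopless case, by contrast, is essentially immediate from the material already assembled above.
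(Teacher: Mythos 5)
Your argument follows the route the paper lays out before the proposition — the $\tau$-invariant rank function $\mathcal{R}$, its subadditivity on the tree class, and the Happel--Preiser--Ringel theorem (Theorem~\ref{tree class}) — which the paper then outsources to \cite[Theorem~1.27]{AKM} for the detailed conclusions; your loop analysis is a sound reconstruction of what that reference must supply. One preliminary step is off as written: you derive that $T_0$ is infinite by positing that some power $\tau^k$ acts as the identity on $\mathcal{C}$, but the paper's notion of a periodic component only guarantees $\tau^{n_x}x=x$ with $n_x$ depending on $x$, and no uniform $k$ need exist when $\mathcal{C}$ is infinite. The conclusion is nonetheless immediate by a simpler count: the covering $\Z T\to\mathcal{C}$ sends $\tau$-orbits onto $\tau$-orbits, so $\mathcal{C}$ has at most $\sharp T_0$ of them, each finite by periodicity, and hence $T_0$ finite would force $\mathcal{C}$ finite.

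With that replacement, the rest holds up. In the loop case, the rank identity $\rank E=\rank X+\rank(\tau X)$ does double duty exactly as you use it: when $\tau X\neq X$ it traps $X$ and $\tau X$ as the only summands of $E$ and collapses $\mathcal{C}$ to a finite $\tau$-orbit, forcing $\tau X=X$; when $\tau X=X$ it bounds the loop multiplicity by $1$ and makes inequality~(\ref{(1)}) strict at $X$, so $\mathcal{R}|_T$ is not additive and Theorem~\ref{tree class}(2) pins down $\overline{T}=A_\infty$. The identification $\mathsf{Aut}_\tau(\Z A_\infty)=\langle\tau\rangle$ together with the $\tau$-fixed vertex then forces $G=\langle\tau\rangle$, and the estimate $r_{i+2}\le 2r_{i+1}-r_i\le r_i-2r_{i-1}\le -r_{i-2}<0$ (with the obvious adjustment at $i=2$) correctly excludes a loop at any interior vertex of the tube, which is your "two steps from the mouth" claim made precise.
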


\subsection{Indecomposable modules over special biserial algebras}\label{indec SBA}
For a finite-dimensional algebra $\Lambda$, let $Q$ be the Gabriel quiver, $\mathcal{I}$ the admissible ideal such that $\Lambda\simeq \k Q/\mathcal{I}$. Then, $\Lambda$ is called \textit{special biserial} if the following two conditions are satisfied.
\begin{enumerate}[(i)]
\item For each vertex $x$ of $Q$, $\sharp x^{+}\leq 2$ and $\sharp x^{-}\leq 2$.
\item For each arrow $\alpha$ of $Q$, there exist at most one arrow $\beta$ such that $\alpha\beta\notin\mathcal{I}$ and at most one arrow $\gamma$ such that $\gamma\alpha\notin\mathcal{I}$.
\end{enumerate}
Special biserial algebras are of tame representation type, and all finite-dimensional indecomposable modules are classified into ``string modules" and ``band modules"; (for the definitions of string modules and band modules, for example, see {\cite[Section II, p.p. 47--52]{Erd}.}) 

\begin{theorem}[{\cite[(2.3) Proposition]{WW}}]\label{stringband}
Let $\Lambda$ be a special biserial algebra. 
Then, the disjoint union of the set of string modules, the set of band modules, and the set of all projective-injective modules corresponding to the binomial relations forms a complete set of isoclasses of finite-dimensional indecomposable $\Lambda$-modules.
\end{theorem}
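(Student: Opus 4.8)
The plan is to reconstruct the structure of the proof of this classical classification --- due to Wald--Waschb\"usch \cite{WW}, with the string-algebra case also treated by Butler--Ringel \cite{BR} --- which proceeds in three stages. First I would reduce an arbitrary special biserial algebra $\Lambda\simeq\k Q/\mathcal{I}$ to a \emph{string algebra}, i.e.\ one whose relations are all monomial. Choose a minimal generating set of $\mathcal{I}$; after rescaling arrows its binomial members may be written $p_i-q_i$ with $p_i,q_i$ parallel paths of length $\geq 2$, and each such relation produces a non-uniserial projective--injective module $P_i$ whose simple socle $S_i$ is spanned by the common image of $p_i$ and $q_i$. Put $N=\sum_i\Lambda S_i\Lambda$ and $\Lambda':=\Lambda/N$; then each $p_i-q_i$ becomes the pair of monomial relations $p_i=0=q_i$, so $\Lambda'$ is a string algebra. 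One then checks that any indecomposable $\Lambda$-module $M$ with $M\not\simeq P_i$ for all $i$ satisfies $MN=0$: otherwise some $P_i$ embeds in $M$ and, being injective, splits off, forcing $M\simeq P_i$. Since conversely every indecomposable $\Lambda'$-module remains indecomposable over $\Lambda$ and no $P_i$ is a $\Lambda'$-module, we get $\mathrm{ind}\,\Lambda=\{P_i\}_i\sqcup\mathrm{ind}\,\Lambda'$, and the theorem is reduced to the string-algebra case.

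\textbf{Stage 2: the combinatorial modules and their elementary properties.} Now let $\Lambda=\k Q/\mathcal{I}$ be a string algebra. I would recall from \cite{HL,BR} the notion of a \emph{string} $C$ (a reduced walk in the letters $Q_1\cup Q_1^{-1}$ that avoids $\mathcal{I}$ read in either direction) and of a \emph{band} $B$ (a cyclic string, not a proper power, all of whose powers are strings, involving both a direct and an inverse letter), together with the string module $M(C)$ and the band modules $M(B,\lambda,m)$ for $\lambda\in\k^{\times}$ and $m\geq 1$. Then I would record the two bookkeeping facts. First, these modules are indecomposable: for $M(C)$ because every endomorphism is a finite sum of graph maps along substrings and connectedness of the walk forces the only idempotents to be $0$ and $1$, so $\End_\Lambda M(C)$ is local; for $M(B,\lambda,m)$ because the distinguished arrow acts by a single Jordan block $J_m(\lambda)$. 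Second, the only coincidences are $M(C)\simeq M(C')$ iff $C'\in\{C,C^{-1}\}$, and $M(B,\lambda,m)\simeq M(B',\lambda',m')$ iff $m=m'$, $\lambda=\lambda'$, and $B'$ is a cyclic rotation of $B$ or of $B^{-1}$. Both facts are direct computations with the string combinatorics and Jordan canonical form.

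\textbf{Stage 3: completeness, and the main obstacle.} It remains to show that every finite-dimensional indecomposable $\Lambda$-module is one of the $M(C)$ or $M(B,\lambda,m)$, and this is the heart of the proof. I would use the functorial-filtration method of Gelfand--Ponomarev as adapted to string algebras: for each string --- allowing one- and two-sided infinite strings as labels, although only the finite ones yield finite-dimensional modules --- one defines subfunctors of the forgetful functor $\mathsf{mod}\,\Lambda\to\mathrm{Vect}_{\k}$ recording, for an element of a module, the longest walk that can be ``read off'' starting from it; on any finite-dimensional module these subfunctors produce a finite filtration whose subquotients are one-dimensional and indexed by finite strings and band data. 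One then shows the filtration is compatible enough with direct sums that an indecomposable module $M$ must have all its ``jumps'' supported on a single string (giving $M\simeq M(C)$), or on a single band together with one extra scalar encoding a Jordan block (giving $M\simeq M(B,\lambda,m)$). Alternatively one could follow \cite{WW} literally and translate the module classification into a ``bunch of chains'' matrix problem whose indecomposables are known; I would prefer the functorial-filtration route as it is self-contained. The genuine obstacle is exactly this last stage: making the string subfunctors and their infinite-string labels precise, proving the filtration is exhaustive, and extracting from indecomposability that precisely one string or band can contribute --- everything in Stages 1 and 2 is bookkeeping once these definitions are in place.
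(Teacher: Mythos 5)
The paper does not prove this statement: it is quoted verbatim from Wald--Wasch\-b\"usch \cite[(2.3) Proposition]{WW} and used as a black box, so there is no internal argument to compare against. Your three-stage sketch is a faithful reconstruction of the standard proof from the literature: the reduction to string algebras by splitting off the non-uniserial projective--injectives (your Stage 1 is exactly the socle-splitting argument, and it is correct as written -- the key point being that a nonzero map $P_i\to M$ is automatically injective because $P_i$ has simple socle, and then splits by injectivity of $P_i$), the construction and elementary properties of string and band modules, and then the completeness argument, for which you correctly identify the two routes (the matrix-problem reduction actually used in \cite{WW}, versus the Gelfand--Ponomarev functorial-filtration method later adapted by Ringel and Crawley-Boevey). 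The only place your sketch is thinner than a real proof is exactly where you say it is -- Stage 3 -- and, more minorly, the indecomposability of band modules in Stage 2, where ``the distinguished arrow acts by a single Jordan block'' is necessary but not by itself sufficient; one still needs the graph-map analysis to see $\End_\Lambda M(B,\lambda,m)\simeq\k[x]/(x^m)$ is local. Since the paper only cites the result, there is nothing further to compare.
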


\section{The Heller lattices over the symmetric Kronecker algebra}
In this section, we consider the symmetric Kronecker algebra $A:=\O[X, Y]/(X^2, Y^2)$, that is, it is the bound quiver algebra over $\O$ defined by the following quiver and relations:
$$ \begin{xy}
(0,0)*[o]+{1}="1",
\SelectTips{eu}{}
\ar @(ur,dr)"1";"1"^{Y}
\ar @(ul,dl)"1";"1"_{X}
\end{xy};\quad X^2=Y^2=0, \quad XY-YX=0.$$
From this section to the end of this paper, we assume that {the residue field} $\kappa$ is algebraically closed. Then, a $d$-dimensional $\overline{A}$-module $M$ is of the form
$$ M=\begin{xy}
(0,0)*[o]+{\kappa ^d}="1",
\SelectTips{eu}{}
\ar @(ur,dr)"1";"1"^{M_2}
\ar @(ul,dl)"1";"1"_{M_1}
\end{xy},$$
where $M_1$ and $M_2$ are square matrices of size $d$ which commute and have square zero \cite[Chapters II and III]{ASS}. To simplify, we denote by $(d, M_1, M_2)$ the $\overline{A}$-module $M$. Throughout this section, for a positive integer $n$, we denote by $e_1,\ldots e_n$ the standard basis of $\O ^n$ and we adopt $e_1, Xe_1, Ye_1, XYe_1,\ldots ,e_n, Xe_n, Ye_n, XYe_n$ as an $\O$-basis of $A^n$. We call this $\O$-basis of $A^n$ \textit{the standard basis} of $A^n$. 

\subsection{Indecomposable modules over \mbox{\boldmath $\overline{A}$}}\label{indec mod Heller}

In this subsection,  we give a complete list of Heller lattices. By Theorem \ref{stringband}, all finite-dimensional indecomposable $\overline{A}$-modules are classified into string modules, band modules, and projective-injective modules. 

First, the unique indecomposable projective-injective module $\overline{A}$ is given by
\[ \left(4, 
\left(\begin{array}{cccc}
0 & 0& 0 & 0 \\
1 & 0 & 0 & 0 \\
0 & 0 & 0 & 0 \\
0 & 0 & 1 & 0 \end{array}\right), \left(\begin{array}{cccc}
0 & 0& 0 & 0 \\
0 & 0 & 0 & 0 \\
1 & 0 & 0 & 0 \\
0 & 1 & 0 & 0 \end{array}\right) \right). \]
Now, we present a complete list of the other finite-dimensional indecomposable $\overline{A}$-modules, which are denoted by $M(m)$, $M(-m)$, $M(\lambda)_{n}$, where $m\in\Z_{\geq 0}$, $n>0$ and $\lambda$ {in} the projective line $\mathbb{P}^1(\kappa)=\kappa\sqcup\{\infty\}$. 
\begin{enumerate}[(i)]
\item The string module $M(m):=M((\beta_1^{\ast}\beta_2)^m)$ $(m\in\mathbb{Z}_{\geq 0})$ is given by the formula:
\[ M(m) =\left( 2m+1, 
\left( 
\begin{array}{c:c}
\mathbf{0}_m & \mathbf{0}_{m+1} \\
\hdashline %
\mathbf{1}_m & \mathbf{0}_{m+1} \\
0  \cdots  0 & 0 \cdots 0 
\end{array}
\right),  
\left( 
\begin{array}{c:c}
\mathbf{0}_m & \mathbf{0}_{m+1} \\
\hdashline %
0  \cdots  0 & 0 \cdots 0  \\
\mathbf{1}_m & \mathbf{0}_{m+1} \\
\end{array}
\right)\right)
\]
 \item The string module $M(-m):=M((\beta_1\beta_2^\ast)^m)$ $(m\in\mathbb{Z}_{\geq 0})$ is given by the formula:
\[ M(-m) =\left( 2m+1, 
\left( 
\begin{array}{cc:c}
&\hspace{-1cm}\mathbf{0}_{m+1} & \mathbf{0}_{m} \\
\hdashline %
\mathbf{1}_m &\begin{array}{c}
\vspace{-2mm}0 \\
\vdots \\
0 \end{array} & \mathbf{0}_{m} \\
\end{array}
\right),  
\left( 
\begin{array}{cc:c}
&\hspace{-1cm}\mathbf{0}_{m+1} & \mathbf{0}_{m} \\
\hdashline %
\begin{array}{c}
\vspace{-2mm}0 \\
\vdots \\
0 \end{array} &\mathbf{1}_m & \mathbf{0}_{m} \\
\end{array}
\right)  \right)
\]

\item The string module $M(0)_{n}:=M((\beta_1\beta_2^{\ast})^{n-1}\beta_1)$ $(n\in\mathbb{Z}_{>0})$ is given by the formula:
\[  M(0)_{n} = \left( 2n, 
\left( 
\begin{array}{c:c}
\mathbf{0}_n & \mathbf{0}_{n} \\
\hdashline %
\mathbf{1}_n & \mathbf{0}_{n} \\
\end{array}
\right),  
\left( 
\begin{array}{c:c}
\mathbf{0}_n & \mathbf{0}_{n} \\
\hdashline %
J(0,n) & \mathbf{0}_{n} \\
\end{array}
\right)\right)\]

\item The string module $M(\infty)_{n}:=M(\beta_2(\beta_1^{\ast}\beta_2)^{n-1})$ $(n\in\mathbb{Z}_{>0})$ is given by the formula:
\[  M(\infty)_{n} = \left( 2n, 
\left( 
\begin{array}{c:c}
\mathbf{0}_n & \mathbf{0}_{n} \\
\hdashline %
J(0,n) & \mathbf{0}_{n} \\
\end{array}
\right),  
\left( 
\begin{array}{c:c}
\mathbf{0}_n & \mathbf{0}_{n} \\
\hdashline %
\mathbf{1}_n& \mathbf{0}_{n} \\
\end{array}
\right)\right)\]

\item Let $V$ be a finite-dimensional indecomposable left $\kappa[x,x^{-1}]$-module. Assume that $V$ is represented by $x\mapsto J(\lambda, n)$ with respect to a basis of $V$ for some $\lambda\in\kappa^{\times}$ and $n\in\mathbb{Z}_{>0}$. The band module $M(\lambda)_{n}:=N(\beta_2^{\ast}\beta_1, V)$ is given by the formula:
\[ M(\lambda)_{n} = \left( 2n, 
\left( 
\begin{array}{c:c}
\mathbf{0}_n & \mathbf{0}_{n} \\
\hdashline %
\mathbf{1}_n & \mathbf{0}_{n} \\
\end{array}
\right),  
\left( 
\begin{array}{c:c}
\mathbf{0}_n & \mathbf{0}_{n} \\
\hdashline %
J(\lambda,n)& \mathbf{0}_{n} \\
\end{array}
\right)\right)\]
\end{enumerate}

\begin{lemma}\label{indecc}
The set of the $\overline{A}$-modules 
\[ \{ M(m) \mid m\in\mathbb{Z}\}\sqcup\{ M(\lambda)_{n} \mid \lambda\in \mathbb{P}^{1}(\kappa),\ n\in\mathbb{Z}_{\geq 1}\} \sqcup \{\overline{A}\} \]
forms a complete set of isoclasses of finite-dimensional indecomposable modules over $\overline{A}$.
\end{lemma}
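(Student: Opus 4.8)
The plan is to invoke Theorem \ref{stringband} (the Wald--Waschbüsch classification for special biserial algebras) and then match the abstract string/band modules with the explicit matrix presentations listed in (i)--(v). First I would check that $\overline{A} = \kappa[X,Y]/(X^2,Y^2)$ is indeed special biserial: its Gabriel quiver has a single vertex with two loops $\beta_1 = X$, $\beta_2 = Y$, the ideal is generated by $\beta_1^2$, $\beta_2^2$ and $\beta_1\beta_2 - \beta_2\beta_1$, so conditions (i) and (ii) in the definition of special biserial are immediate (at most two arrows in and out; for each loop $\beta_i$ there is at most one composable loop avoiding the relations). Then Theorem \ref{stringband} tells us the indecomposables are exactly the string modules, the band modules, and the projective-injective(s) coming from the binomial relation $\beta_1\beta_2 - \beta_2\beta_1$; here the unique binomial relation gives the single projective-injective $\overline{A}$ already written out above.

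Next I would enumerate the strings and bands. A string is a reduced walk in the quiver alternating direct and inverse letters from $\{\beta_1, \beta_1^\ast, \beta_2, \beta_2^\ast\}$ subject to the string conditions (no $\beta_i\beta_i$, no $\beta_i^\ast\beta_i^\ast$, no subword or its inverse lying in $\mathcal I$). Because the quiver has one vertex and the only relations on monomials are $\beta_1^2, \beta_2^2$ and the commutator, the admissible strings are precisely the alternating words built from $\beta_1^\ast\beta_2$ and $\beta_1\beta_2^\ast$ (and their truncations), which is exactly the bookkeeping that produces the families $M(m) = M((\beta_1^\ast\beta_2)^m)$, $M(-m) = M((\beta_1\beta_2^\ast)^m)$, $M(0)_n = M((\beta_1\beta_2^\ast)^{n-1}\beta_1)$, $M(\infty)_n = M(\beta_2(\beta_1^\ast\beta_2)^{n-1})$; one then reads off the matrices $M_1, M_2$ from the standard string-module recipe to confirm they agree with the displayed formulas. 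For bands, a band is a cyclic string not a proper power; the only primitive cyclic reduced word available (up to rotation and inversion) is $\beta_2^\ast\beta_1$, so the band modules are $N(\beta_2^\ast\beta_1, V)$ with $V$ running over indecomposable $\kappa[x,x^{-1}]$-modules, i.e.\ $V$ given by a single Jordan block $J(\lambda, n)$ with $\lambda \in \kappa^\times$, $n \geq 1$; writing out the representation gives the $M(\lambda)_n$ of (v). Finally I would observe that the reindexing $\{M(m) \mid m \in \mathbb Z\} = \{M(m)\}_{m\ge 0} \sqcup \{M(-m)\}_{m\ge 1}$ together with $M(0)$ ($=$ the simple, the $m=0$ case) and the $M(0)_n, M(\infty)_n, M(\lambda)_n$ for $\lambda \in \kappa^\times$ exactly repackages the list as $\{M(m)\mid m\in\mathbb Z\} \sqcup \{M(\lambda)_n \mid \lambda\in\mathbb P^1(\kappa),\, n\ge 1\} \sqcup \{\overline A\}$, and that this disjoint union has no repetitions because distinct strings/bands give non-isomorphic modules (part of Theorem \ref{stringband}), one just has to check no string module coincides with a band module or with $\overline A$ by comparing dimensions and the ranks of $M_1, M_2$.

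The main obstacle is purely combinatorial rather than conceptual: making sure the enumeration of admissible strings is genuinely complete and non-redundant — in particular that every reduced walk over this one-vertex quiver really does reduce, up to inversion, to one of the four displayed shapes, and that the band case yields only the single primitive cycle $\beta_2^\ast\beta_1$ (so that $\lambda \in \kappa^\times$ together with the two string families $M(0)_n, M(\infty)_n$ account for all of $\mathbb P^1(\kappa)$). I expect to handle this by a short case analysis on how a reduced word can start and on the forced alternation pattern imposed by the relations $\beta_i^2 = 0$ and $\beta_1\beta_2 = \beta_2\beta_1$; everything else (the matrix computations, the dimension count $\dim M(\pm m) = 2m+1$, $\dim M(\lambda)_n = 2n$, $\dim \overline A = 4$) is routine verification.
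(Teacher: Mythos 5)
Your proposal is correct and takes essentially the same route as the paper, which simply invokes Theorem \ref{stringband} and cites \cite{M} for the explicit construction; you have merely written out in detail the verification (that $\overline{A}$ is special biserial, that the forced alternation $\beta_i \mapsto \beta_{3-i}^{\ast}$ makes every reduced walk one of the four displayed string families, and that the unique primitive band is $\beta_2^{\ast}\beta_1$) which the paper leaves implicit.
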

\begin{proof}
The assertion follows from {Theorem} \ref{stringband}. See also \cite{M} for a construction.
\end{proof}

\para\label{notation} \textbf{Notation.}
For simplicity, we visualize an $\overline{A}$-module as follows:
\begin{itemize}
\item Vertices represent basis vectors of the underlying $\kappa$-vector spaces.
\item Arrows of the form $\longrightarrow$ represent the action of $X$, and $\dashrightarrow$ represent the action of $Y$. 
\item If there is no arrow (resp. dotted arrow) starting at a vertex, then $X$ (resp. $Y$) annihilates the corresponding basis element.
\end{itemize}
By using this notation, the indecomposable modules listed above are represented as follows:
\begin{multienumerate}
\mitemxx{$\overline{A}= 
\begin{xy}
(0,0)*[o]+{e_1}="1",(15,7)*[o]+{Xe_1}="X",(15,-7)*[o]+{Ye_1}="Y",(30,0)*[o]+{XYe_1}="XY",
\ar @{-->}"1";"Y"
\ar "1";"X"
\ar @{-->} "X";"XY"
\ar "Y";"XY"
\end{xy}. 
$}{$ M(m) =\begin{xy}
(0,7)*[o]+{u_1}="u1",(20,12)*[o]+{v_0}="v0",(20,-7)*[o]+{v_{m-1}}="vp-1",(0,-7)*[o]+{u_{m-1}}="up-1",
(0,2)*[o]+{\vdots}="2",(0,-2)*[o]+{\vdots}="22",
(0,-12)*[o]+{u_m}="up",(20,7)*[o]+{v_1}="v1",(20,-12)*[o]+{v_m}="vp",
(20,2)*[o]+{\vdots}="11",(20,-2)*[o]+{\vdots}="111",
\ar @{-->}"up";"vp"
\ar "up";"vp-1"
\ar @{-->}"u1";"v1"
\ar @{-->}"up-1";"vp-1"
\ar "u1";"v0"
\end{xy}$}
\mitemxx{$M(-m) = \begin{xy}
(0,7)*[o]+{u_1}="u1",
(20,-7)*[o]+{v_{m-1}}="vp-1",(0,-12)*[o]+{u_{m}}="up-1",
(0,-3)*[o]+{\vdots}="2",(0,2)*[o]+{u_2}="u2",(20,2)*[o]+{v_2}="v2",
(0,-17)*[o]+{u_{m+1}}="up",(20,-12)*[o]+{v_{m}}="vp",
(20,7)*[o]+{v_1}="v1",
(20,-2)*[o]+{\vdots}="11",
\ar @{-->}"up";"vp"
\ar @{-->}"up-1";"vp-1"
\ar @{-->}"u2";"v1"
\ar "u1";"v1"
\ar "u2";"v2"
\ar "up-1";"vp"
\end{xy}
$}{$M(0)_{n} =
 \begin{xy}
(0,7)*[o]+{u_1}="u1",
(20,-7)*[o]+{v_{n-1}}="vp-1",(0,-7)*[o]+{u_{n-1}}="up-1",
(0,-2)*[o]+{\vdots}="2",(0,2)*[o]+{u_2}="u2",(20,2)*[o]+{v_2}="v2",
(0,-12)*[o]+{u_{n}}="up",(20,-12)*[o]+{v_{n}}="vp",
(20,7)*[o]+{v_1}="v1",
(20,-2)*[o]+{\vdots}="11",
\ar "up";"vp"
\ar "up-1";"vp-1"
\ar @{-->}"u2";"v1"
\ar "u1";"v1"
\ar "u2";"v2"
\ar @{-->}"up";"vp-1"
\end{xy}
$}
\mitemxx{$M(\infty)_{n}=
\begin{xy}
(0,12)*[o]+{u_1}="u0",
(0,7)*[o]+{u_2}="u1",(20,12)*[o]+{v_1}="v0",(20,-7)*[o]+{v_{n-1}}="vp-1",(0,-7)*[o]+{u_{n-1}}="up-1",
(0,2)*[o]+{\vdots}="2",(0,-2)*[o]+{\vdots}="22",
(0,-12)*[o]+{u_n}="up",(20,7)*[o]+{v_2}="v1",(20,-12)*[o]+{v_n}="vp",
(20,2)*[o]+{\vdots}="11",(20,-2)*[o]+{\vdots}="111",
\ar @{-->}"up";"vp"
\ar @{-->}"u0";"v0"
\ar "up";"vp-1"
\ar @{-->}"u1";"v1"
\ar @{-->}"up-1";"vp-1"
\ar "u1";"v0"
\end{xy} $}{$ M(\lambda)_{n}= \begin{xy}
(0,14)*[o]+{u_1}="u1",
(20,-7)*[o]+{v_{n-1}}="vp-1",(0,-7)*[o]+{u_{n-1}}="up-1",
(0,0)*[o]+{\vdots}="2",(0,7)*[o]+{u_2}="u2",(20,7)*[o]+{v_2}="v2",
(0,-14)*[o]+{u_{n}}="up",(20,-14)*[o]+{v_{n}}="vp",
(20,14)*[o]+{v_1}="v1",
(20,0)*[o]+{\vdots}="11",
\ar "up";"vp"
\ar "up-1";"vp-1"
\ar @{-->}"u2";"v1"
\ar @(ur,ul)@{-->}"u1";"v1"^{\lambda}
\ar "u1";"v1"
\ar @(dr,dl)@{-->}"u2";"v2"^{\lambda}
\ar "u2";"v2"
\ar @{-->}"up";"vp-1"
\ar @(dr,dl)@{-->}"up";"vp"^{\lambda}
\end{xy}
$}
\end{multienumerate}
Here, $\begin{xy} 
(0,-5)*[o]+{u_i}="u",(20,-5)*[o]+{v_{i}}="v1",(20,5)*[o]+{v_{i-1}}="v2"
\ar @{-->}"u";"v2"
\ar @(dr,dl)@{-->}"u";"v1"^{\lambda}\end{xy}
$ in the picture 6 means $Yu_i=\lambda v_i+v_{i-1}$. 

From now on, as a $\kappa$-basis of a non-projective indecomposable module over $\overline{A}$, we adopt the above $\kappa$-basis.

\begin{remark}\label{ARseq} Almost split sequences for $\sf{mod}$-$\overline{A}$ are known to be as follows:
$$
\begin{array}{ll}
0\longrightarrow M(-1)\longrightarrow \overline{A}\oplus M(0)\oplus M(0)\longrightarrow M(1)\longrightarrow 0&\\
0\longrightarrow M(n-1) \longrightarrow M(n)\oplus M(n)\longrightarrow M(n+1)\longrightarrow 0 & \text{if $n\neq 0$}\\
0\longrightarrow M(\lambda)_{1} \longrightarrow   M(\lambda)_{2}\longrightarrow M(\lambda)_{1}\longrightarrow 0 &  \lambda\in \mathbb{P}^1(\kappa) \\
0\longrightarrow M(\lambda)_{n} \longrightarrow  M(\lambda)_{n-1}\oplus  M(\lambda)_{n+1}\longrightarrow M(\lambda)_{n}\longrightarrow 0 & n> 1, \lambda\in \mathbb{P}^1(\kappa) 
\end{array}
$$
\end{remark}


\begin{lemma}\label{proj1} For all $\lambda\in \mathbb{P}^1(\kappa)$ and $n\in\mathbb{Z}_{>0}$, there is an isomorphism
\[ \widetilde{\Omega}(M(\lambda)_n)\simeq M(-\lambda)_{n},\quad \widetilde{\Omega}(M(\infty)_n)\simeq M(\infty)_n. \]
\end{lemma}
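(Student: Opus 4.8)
The plan is to compute $\widetilde{\Omega}(M(\lambda)_n)$ and $\widetilde{\Omega}(M(\infty)_n)$ directly from the explicit presentations recorded above. The first step is to identify the projective cover. For $M=M(\lambda)_n$ or $M=M(\infty)_n$ the radical $\mathrm{rad}(M)$ is spanned by $v_1,\dots,v_n$, so $M/\mathrm{rad}(M)$ is $n$-dimensional and $M$ is generated over $\overline{A}$ by the top vectors $u_1,\dots,u_n$. Hence the projective cover is $P=\overline{A}^{\oplus n}$; writing $\epsilon_1,\dots,\epsilon_n$ for the generators of the $n$ copies of $\overline{A}$, the covering map $p\colon P\to M$ sends $\epsilon_i\mapsto u_i$. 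Since $\dim_\kappa P=4n$ and $\dim_\kappa M=2n$, this already forces $\dim_\kappa\widetilde{\Omega}(M)=2n$, matching the dimensions of the claimed target modules.

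The second step is to exhibit an explicit $\kappa$-basis of $\ker p=\widetilde{\Omega}(M)$ and read off its $\overline{A}$-structure. For $M(\lambda)_n$ with $\lambda\in\kappa$ (the case $\lambda=0$ being the string module $M(0)_n$), using $Xu_i=v_i$ and $Yu_i=\lambda v_i+v_{i-1}$ with $v_0=0$, I would check that the $2n$ elements
\[ a_i:=XY\epsilon_i\ (1\le i\le n),\qquad b_1:=Y\epsilon_1-\lambda X\epsilon_1,\qquad b_i:=Y\epsilon_i-\lambda X\epsilon_i-X\epsilon_{i-1}\ (2\le i\le n) \]
lie in $\ker p$ and are $\kappa$-linearly independent, hence form a basis. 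A direct computation using $X^2=Y^2=0$ and $XY=YX$ then gives $Xa_i=Ya_i=0$, $Xb_i=a_i$ and $Yb_i=-\lambda a_i-a_{i-1}$ (with $a_0=0$). Comparing with the presentation of $M(-\lambda)_n$, one checks that $b_i\mapsto(-1)^{i-1}u_i$, $a_i\mapsto(-1)^{i-1}v_i$ defines an $\overline{A}$-isomorphism $\widetilde{\Omega}(M(\lambda)_n)\xrightarrow{\sim}M(-\lambda)_n$, the alternating sign being forced by matching $Yb_i$ against $Yu_i=-\lambda v_i+v_{i-1}$. The case $M(\infty)_n$ is entirely parallel: using $Xu_i=v_{i-1}$ and $Yu_i=v_i$, the basis of $\ker p$ is $a_i:=XY\epsilon_i$, $b_1:=X\epsilon_1$, $b_i:=X\epsilon_i-Y\epsilon_{i-1}$ $(2\le i\le n)$, one computes $Yb_i=a_i$, $Xb_i=-a_{i-1}$, $Xa_i=Ya_i=0$, and the same sign-twisted assignment $b_i\mapsto(-1)^{i-1}u_i$, $a_i\mapsto(-1)^{i-1}v_i$ yields $\widetilde{\Omega}(M(\infty)_n)\simeq M(\infty)_n$.

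Finally, since $\widetilde{\Omega}$ is computed here via a genuine projective cover, $\ker p$ has no projective direct summand, so the module isomorphisms above are exactly the asserted isomorphisms in the stable category $\underline{\mathsf{mod}}$-$\overline{A}$. I do not expect any conceptual obstacle: the verification is completely elementary once the right generators $b_i$ of $\ker p$ are guessed, and the only thing that needs genuine care is the bookkeeping with the indices and the alternating signs.
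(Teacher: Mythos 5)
Your proposal is correct and takes essentially the same approach as the paper's proof: construct the projective cover $\overline{A}^{\oplus n}\twoheadrightarrow M(\lambda)_n$ sending the generator of the $i$-th summand to $u_i$, write down an explicit $\kappa$-basis of the kernel, and identify the $\overline{A}$-action with that of $M(-\lambda)_n$ (resp. $M(\infty)_n$). The only cosmetic difference is where the alternating signs live: the paper bakes $(-1)^{i-1}$ into the chosen basis vectors $(-1)^{i-1}XYe_i$, while you keep the basis unsigned and put the sign twist into the isomorphism $b_i\mapsto(-1)^{i-1}u_i$, $a_i\mapsto(-1)^{i-1}v_i$; these are the same computation.
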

\begin{proof}
For $\lambda\in\mathbb{P}^1(\kappa)$ and $n>0$, we define a map $\pi_n^{\lambda}:(\overline{A})^n \to M(\lambda)_n$ by $\pi_n^{\lambda}: e_i \mapsto u_i$.
Then, $\pi_n^{\lambda}$ is the projective cover of $M(\lambda)_n$ as an $\overline{A}$-module. First, we assume that $\lambda\neq\infty$. In this case, the kernel of $\pi_n^{\lambda}$ is given by
\begin{align*}
&\ \kappa (Ye_1-\lambda Xe_1) \oplus \kappa XY e_1 \\
&\oplus \kappa(Ye_2-\lambda Xe_2-Xe_1) \oplus\kappa (-XY e_2) \\
&\oplus \cdots \\
&\oplus \kappa(Ye_n-\lambda Xe_n-Xe_{n-1}) \oplus \kappa (-1)^nXY e_n,
\end{align*}
and it is isomorphic to $M(-\lambda)_n$ in $\underline{\sf{mod}}\text{-}\overline{A}$. 
Next, we consider $\lambda=\infty$ case. A $\kappa$-basis of the kernel of $\pi_n^{\infty}$ is given by
\begin{align*}
&\ \kappa Xe_1 \oplus \kappa XY e_1 \\
&\oplus \kappa(Xe_2-Ye_1) \oplus\kappa (-XY e_2) \\
&\oplus \cdots \\
&\oplus \kappa(Xe_n-Ye_{n-1}) \oplus \kappa (-1)^nXY e_n,
\end{align*}
and it is isomorphic to $M(\infty)_n$ in $\underline{\sf{mod}}\text{-}\overline{A}$.
In both cases, the isomorphisms are lifted in $\sf{mod}$-$\overline{A}$ since the kernels have no $\overline{A}$ as a direct summand.
\end{proof}

\subsection{Properties of Heller lattices}\label{Heller lattices}
Let $M$ be a non-projective indecomposable $\overline{A}$-module listed in Lemma \ref{indecc}. 
For each $m$ and $\lambda$, the projective cover of $M$ as an $A$-module $\pi _M$ is given by 
\[ \pi_M:\left\{ \begin{array}{lll}
A^m\longrightarrow M, & e_i\longmapsto u_i & \text{if $M\simeq M(m)$,\quad $m>0$,}\\
A^{m+1}\longrightarrow M, & e_i\longmapsto u_i & \text{if $M\simeq M(-m)$,\quad  $m>0$,}\\
A\longrightarrow M, & e_1\longmapsto u_1& \text{if $M\simeq M(0)$,}\\
A^n\longrightarrow M, & e_i\longmapsto u_i & \text{if $M\simeq M(\lambda)_n$,\quad  $n>0$, $\lambda\in\mathbb{P}^1(\kappa)$.}\\
\end{array}\right. \]

The author studied the Heller lattices of $M(m)$ for $m\in\mathbb{Z}$ and determined the shape of the unique non-periodic Heller component containing them. 

\begin{theorem}[{\cite[{Propositions 2.4, 2.7, and Theorem}]{M}}]\label{nonperiodic Heller}
For each $m\in\mathbb{Z}$, let $Z_m$ be the kernel of $\pi_{M(m)}$. Then the following statements hold.
\begin{enumerate}[(1)]
\item There is an isomorphism $Z_m\otimes \kappa\simeq M(m-1)\oplus M(m)$.
\item The Heller lattice $Z_m$ is indecomposable.
\item There is an isomorphism $\tau Z_m\simeq Z_{m-1}$.
\item The Heller component containing $Z_m$ is isomorphic to $\mathbb{Z}A_{\infty}$. 
\item The Heller lattice $Z_m$ appears on the boundary of the component.
\end{enumerate}
\end{theorem}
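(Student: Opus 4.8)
The plan is to prove the five assertions in the stated order, each building on the previous. For (1): since $M(m)$ is annihilated by $\varepsilon$, the projective cover $\pi_{M(m)}\colon A^{t}\to M(m)$ (with $t=m$ for $m>0$, $t=|m|+1$ for $m<0$, $t=1$ for $m=0$) satisfies $\varepsilon A^{t}\subseteq Z_m$, and its reduction modulo $\varepsilon$ is the projective cover of $M(m)$ over $\overline{A}$; hence $Z_m/\varepsilon A^{t}\cong\widetilde{\Omega}(M(m))$, and a direct computation of the syzygy of the string module $M(m)$ over $\overline{A}$ gives $\widetilde{\Omega}(M(m))\cong M(m-1)$. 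Multiplication by $\varepsilon$ then yields short exact sequences $0\to A^{t}\to Z_m\to M(m-1)\to 0$ and, after reducing the inclusion $\varepsilon A^{t}\subseteq Z_m$ modulo $\varepsilon Z_m$, a sequence $0\to M(m)\to Z_m\otimes\kappa\to M(m-1)\to 0$ of $\overline{A}$-modules. To see that the latter splits I would fix an explicit $\mathcal{O}$-basis of $Z_m$ (built from the standard basis of $A^{t}$: the elements $\varepsilon e_i$, the syzygy relations among the $e_i$ such as $Ye_i-Xe_{i+1}$, and the elements $XYe_i$) and check that the $\overline{A}$-submodule of $Z_m\otimes\kappa$ generated by the classes of these syzygy relations is a complement to the image of $M(m)$, both summands being the asserted string modules by inspection of the $X$- and $Y$-actions on the relevant classes.

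Assertion (2) is a divisibility argument. If $Z_m=L_1\oplus L_2$ nontrivially, then Krull--Schmidt applied to $Z_m\otimes\kappa\cong M(m-1)\oplus M(m)$ (with $M(m-1)\not\simeq M(m)$, their $\kappa$-dimensions being different) forces, say, $L_1\otimes\kappa\simeq M(m-1)$ with $L_1\neq 0$, so $\operatorname{rank}_{\mathcal{O}}L_1=\dim_\kappa M(m-1)$ is odd; but $Z_m\otimes\mathcal{K}\cong(A\otimes\mathcal{K})^{t}$ is free over the local $\mathcal{K}$-algebra $A\otimes\mathcal{K}$, so its direct summand $L_1\otimes\mathcal{K}$ is free too, whence $\operatorname{rank}_{\mathcal{O}}L_1=\dim_\mathcal{K}(L_1\otimes\mathcal{K})$ is divisible by $\dim_\mathcal{K}(A\otimes\mathcal{K})=4$---a contradiction. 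For (3), recall $\tau\simeq\Omega$ since $A$ is symmetric. Pulling back the sequence $0\to A^{t}\to Z_m\to M(m-1)\to 0$ from (1) along the projective cover $\pi_{M(m-1)}\colon A^{s}\to M(m-1)$ produces a middle term $E$ fitting simultaneously into $0\to A^{t}\to E\to A^{s}\to 0$---which splits, so $E$ is projective---and into $0\to Z_{m-1}\to E\to Z_m\to 0$; Schanuel's lemma, the indecomposability and non-projectivity of $Z_{m-1}$ from (2), and a rank count then give $\tau Z_m=\Omega Z_m\simeq Z_{m-1}$.

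For (4) and (5): by (3) the lattices $\{Z_{m+k}\}_{k\in\mathbb{Z}}$ form an infinite $\tau$-orbit inside $\mathcal{CH}(Z_m)$ (pairwise non-isomorphic by (1)), so $\mathcal{CH}(Z_m)$ is infinite and non-periodic; by Theorem~\ref{Ried} it has the form $\mathbb{Z}T/G$ for a directed tree $T$ and an admissible group $G$. To determine $T$ I would compute the almost split sequence ending at $Z_m$ via Proposition~\ref{AKM}: since $A$ is symmetric, $\nu=\operatorname{id}$ and $\operatorname{Ker}\nu(p)=\Omega Z_m\simeq Z_{m-1}$, so the task is to exhibit an endomorphism $\varphi\in\operatorname{End}_A(Z_m)$ satisfying conditions (i)--(iii) there, which requires describing $\operatorname{rad}\operatorname{End}_A(Z_m)$ from the $\mathcal{O}$-basis of $Z_m$. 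I expect the resulting middle term $E_m$ to be indecomposable and different from $Z_m$; its indecomposability places $Z_m$ on the boundary of its component (a unique immediate predecessor), which gives (5) and rules out loops. Finally $\mathcal{D}(Z_m)=2$, since $Z_m\otimes\kappa\cong M(m-1)\oplus M(m)$ has two non-projective indecomposable summands (the unique indecomposable projective $\overline{A}$-module has dimension $4$, whereas $\dim_\kappa M(k)$ is odd), and computing $E_m\otimes\kappa$ should give $\mathcal{D}(E_m)<4=2\mathcal{D}(Z_m)$; then Lemma~\ref{subadd D} makes the restriction of $\mathcal{D}$ to $T$ subadditive and not additive, so by Theorem~\ref{tree class} the tree $\overline{T}$ is a finite Dynkin diagram or $A_\infty$, and being infinite it must be $A_\infty$. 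As $\mathbb{Z}A_\infty$ has no nontrivial admissible automorphism group with non-periodic quotient, we conclude $\mathcal{CH}(Z_m)\simeq\mathbb{Z}A_\infty$.

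The main obstacle is this last step: explicitly producing the almost split sequence ending at $Z_m$---the endomorphism $\varphi$ together with the pullback along the projective cover---and proving that its middle term is indecomposable. This is essentially a matrix computation over $\mathcal{O}$ that depends on the concrete $\mathcal{O}$-bases of the Heller lattices; by contrast the syzygy computation in (1), the divisibility argument in (2), the pullback-plus-Schanuel argument in (3), and the subadditive-function and tree-class analysis in (4)--(5) should be comparatively routine once those bases are available.
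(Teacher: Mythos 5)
The statement is cited from \cite{M}; the paper gives no proof of it here, so I compare your proposal against the paper's analogous arguments for the periodic Heller lattices $Z_n^\lambda$, $Z_n^\infty$.

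Your arguments for (1), (2), (3) are sound. The syzygy identification $\widetilde{\Omega}(M(m))\simeq M(m-1)$ is correct and, with the two short exact sequences you set up, yields (1) once the splitting of $0\to M(m)\to Z_m\otimes\kappa\to M(m-1)\to 0$ is verified by exhibiting explicit complementary $\overline{A}$-submodules, exactly as the paper does for $Z_n^\lambda\otimes\kappa$ in the proof of Proposition~\ref{indec of Heller}(3). The rank-divisibility argument for (2) is clean. For (3), your Schanuel route is a legitimate and arguably slicker alternative to the paper's method (which, for $Z_n^\lambda$ in Proposition~\ref{notuse}, writes down the projective cover explicitly and identifies the kernel by inspection); note you also need that $\Omega Z_m$ has no projective direct summand, which follows since projectives are injective in the Frobenius category $\underline{\sf{latt}}$-$A$, so a projective summand of the syzygy would split off the cover and contradict minimality.

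The gap is in (4)--(5), in the assertion that computing $E_m\otimes\kappa$ ``should give $\mathcal{D}(E_m)<4$.'' It gives exactly $4$. By the Kawata-type result quoted in the introduction (or by direct computation, paralleling Lemma~\ref{E lambda} where $E_n^\lambda\otimes\kappa\simeq M(\lambda)_{n-1}\oplus M(\lambda)_{n+1}\oplus M(-\lambda)_n^{\oplus 2}$ has exactly $4$ non-projective summands for $n>1$), the reduction of the almost split sequence ending at $Z_m$ is the direct sum of the almost split sequence ending at $M(m)$ and a split sequence, giving $E_m\otimes\kappa\simeq M(m-1)^{\oplus 4}$ for $m\neq 1$ and $E_1\otimes\kappa\simeq\overline{A}\oplus M(0)^{\oplus 4}$. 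In every case $\mathcal{D}(E_m)=4=2\mathcal{D}(Z_m)$, so $\mathcal{D}$ is \emph{additive}, not strictly subadditive. Theorem~\ref{tree class}(3) then places $\overline{T}$ among the infinite Dynkin \emph{or Euclidean} diagrams, not ``finite Dynkin or $A_\infty$'' as you wrote. Having $Z_m$ on the boundary with a single neighbour rules out $A_\infty^\infty$ and $\widetilde{A}_n$, but you have not excluded $D_\infty$, $B_\infty$, $C_\infty$, $\widetilde{D}_n$, the $\widetilde{E}$-types, or the other valued Euclidean diagrams. Moreover you cannot invoke Proposition~\ref{periodic_case} to cut the list down first, because your component is not periodic. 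The missing step is the analogue of what the paper does after Lemma~\ref{E lambda}: compute the almost split sequence ending at $E_m$ and show that its middle term has exactly two indecomposable summands, one of them being a $Z_k$. That excludes branching (hence $D_\infty$ and the $\widetilde{D}/\widetilde{E}$-type Euclidean graphs) and nontrivial edge valuations (hence $B_\infty$, $C_\infty$ and the remaining valued Euclidean graphs); additivity and unbounded growth of $\mathcal{D}$ then force $\overline{T}=A_\infty$, after which your observation about admissible groups on $\mathbb{Z}A_\infty$ correctly pins down $G=\{1\}$.
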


In this paper, we focus on the remaining Heller lattices.
For $n\in\mathbb{Z}_{>0}$ and  $\lambda\in\mathbb{P}^1(\kappa)$, we define the Heller $A$-lattice $Z_{n}^{\lambda}$ to be the kernel of $\pi_{M(\lambda)_n}$. 

\para\label{notation2} \textbf{Notation.}
We use the following notations: 

\noindent$\bullet$ For the Heller lattice $Z_n^{\lambda}$ $({n\geq 1},\ \lambda\neq\infty)$, we define
\[ \begin{array}{l} \left(\begin{array}{cccc}
\sf{a}_{1,1}^{\lambda} & \sf{a}_{1,2}^{\lambda}& \sf{a}_{1,3}^{\lambda}& \sf{a}_{1,4}^{\lambda} \\
\sf{a}_{2,1}^{\lambda}& \sf{a}_{2,2}^{\lambda} & \sf{a}_{2,3}^{\lambda} & \sf{a}_{2,4}^{\lambda} \\
\vdots & \vdots & \vdots & \vdots \\
\sf{a}_{n-1,1}^{\lambda}& \sf{a}_{n-1,2}^{\lambda} & \sf{a}_{n-1,3}^{\lambda} & \sf{a}_{n-1,4}^{\lambda} \\
\sf{a}_{n,1}^{\lambda} & \sf{a}_{n,2}^{\lambda} & \sf{a}_{n,3}^{\lambda} & \sf{a}_{n,4}^{\lambda}
\end{array}\right)  \\
 = \left(\begin{array}{cccc}
\varepsilon e_1 & \varepsilon Xe_1 & (Ye_1-\lambda Xe_1) & XY e_1 \\
\varepsilon e_2 & \varepsilon Xe_2 & (Ye_2-\lambda Xe_2-Xe_1) & XY e_2 \\
\vdots & \vdots & \vdots & \vdots \\
\varepsilon e_{n-1} & \varepsilon Xe_{n-1}& (Ye_{n-1}-\lambda Xe_{n-1}-Xe_{n-2}) & XY e_{n-1} \\
\varepsilon e_n & \varepsilon Xe_n  & (Ye_n-\lambda Xe_n-Xe_{n-1}) & XY e_n
\end{array}\right) \end{array} \]
{Here, we understand that $\sf{a}_{0,j}^{\lambda}=0$ for $j=1,2,3,4$.}
Then, $X$ and $Y$ act on $Z_{n}^{\lambda}$ as follows.
\[ X\sf{a}_{i,j}^{\lambda}=\left\{\begin{array}{ll}
\sf{a}_{i,j+1}^{\lambda} & \text{if $j=1,3,$} \\
0 & \text{otherwise,} \end{array}\right.
\quad
 Y\sf{a}_{i,j}^{\lambda}=\left\{\begin{array}{ll}
\varepsilon \sf{a}_{i,3}^{\lambda}+\lambda\sf{a}_{i,2}^{\lambda}+\sf{a}_{i-1,2}^{\lambda}& \text{if $j=1$,} \\
\varepsilon\sf{a}_{i,4}^{\lambda} & \text{if $j=2,$} \\
- \lambda\sf{a}_{i,4}^{\lambda}-\sf{a}_{i-1,4}^{\lambda} & \text{if $j=3,$} \\
0 & \text{otherwise.} 
\end{array}\right.\]

\noindent$\bullet$ For the Heller lattice $Z_n^{\infty}$, we define 
\[ \left(\begin{array}{cccc}
\sf{b}_{1,1} & \sf{b}_{1,2} & \sf{b}_{1,3} & \sf{b}_{1,4} \\
\sf{b}_{2,1} & \sf{b}_{2,2} & \sf{b}_{2,3} & \sf{b}_{2,4} \\
\vdots & \vdots & \vdots & \vdots \\
\sf{b}_{n-1,1} & \sf{b}_{n-1,2} & \sf{b}_{n-1,3} & \sf{b}_{n-1,4} \\
\sf{b}_{n,1} & \sf{b}_{n,2} & \sf{b}_{n,3} & \sf{b}_{n,4}
\end{array}\right)=
 \left(\begin{array}{cccc}
\varepsilon e_1 & Xe_1 & (Ye_1-Xe_2) & XY e_1 \\
\varepsilon e_2 &\varepsilon Xe_2 & (Ye_2-Xe_3) & XY e_2 \\
\vdots & \vdots & \vdots & \vdots \\
\varepsilon e_{n-1} &\varepsilon Xe_{n-1} & (Ye_{n-1}-Xe_n) & XY e_{n-1} \\
\varepsilon e_n &\varepsilon Xe_n & \varepsilon Ye_n & XY e_n 
\end{array}\right) \]
when $n>1$, and if $n=1$, we define
\[ (\sf{b}_{1,1},\sf{b}_{1,2},\sf{b}_{1,3},\sf{b}_{1,4})=(\varepsilon e_1, Xe_1, \varepsilon Ye_1, XYe_1). \]
Then, $X$ and $Y$ act on $Z_{n}^{\infty}$ as follows. If $n>1$, then
\[ X\sf{b}_{i,j}=\left\{\begin{array}{ll}
\varepsilon \sf{b}_{1,2} & \text{if $i=j=1,$} \\
\sf{b}_{i,2} & \text{if $i\neq 1,\ j=1,$} \\
\sf{b}_{i,4} & \text{if $i\neq n,\ j=3,$} \\
\varepsilon \sf{b}_{n,4} & \text{if $i=n,\ j=3,$} \\
0 & \text{otherwise,} \end{array}\right.
\quad
 Y\sf{b}_{i,j}=\left\{\begin{array}{ll}
\varepsilon \sf{b}_{i,3}+\sf{b}_{i+1,2} & \text{if $i\neq n,\ j=1,$} \\
\sf{b}_{n,3} & \text{if $i= n,\ j=1,$} \\
\sf{b}_{1,4} & \text{if $i=1,\ j=2,$} \\
\varepsilon \sf{b}_{i,4} & \text{if $i\neq 1,\ j=2,$} \\
- \sf{b}_{i+1,4} & \text{if $i\neq n,\ j=3,$} \\
0 & \text{otherwise.} 
\end{array}\right.\]
If $n=1$, then
\[ X\sf{b}_{1,j}=\left\{\begin{array}{ll}
\varepsilon \sf{b}_{1,j+1}& \text{if $j=1,3$,}\\
0 & \text{otherwise,}
\end{array}\right.\quad
Y\sf{b}_{1,j}=\left\{\begin{array}{ll}
\sf{b}_{1,j+2} & \text{if $j=1,2$,} \\
0 & \text{otherwise.} \end{array}\right. \]

It is straightforward to prove the following lemma.
\begin{lemma}
We use the lexicographical order on $\{(i,j)\mid i=1,\ldots,n,\ j=1,2,3,4\}$.
Then, the sets $\{ \sf{a}^{\lambda}_{i,j} \mid i=1,\ldots,n,\ j=1,2,3,4\}$ and $\{ \sf{b}_{i,j}\ |\ i=1,\ldots,n,\ j=1,2,3,4\}$ form an (ordered) $\mathcal{O}$-basis of $Z_n^{\lambda}$ and $Z_n^{\infty}$, respectively.
\end{lemma}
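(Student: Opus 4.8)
The plan is to verify the three defining properties of an $\mathcal{O}$-basis of $Z_n^\lambda$ (respectively $Z_n^\infty$) in turn: that each listed element lies in the kernel $Z_n^\lambda=\Ker\pi_{M(\lambda)_n}$, that the $4n$ listed elements are $\mathcal{O}$-linearly independent, and that they generate the whole kernel. First I would record the numerology. Since $M(\lambda)_n$ is an $\overline{A}$-module it is annihilated by $\varepsilon$, so $M(\lambda)_n\otimes\K=0$; tensoring the defining short exact sequence $0\to Z_n^\lambda\to A^n\to M(\lambda)_n\to 0$ with $\K$ yields $Z_n^\lambda\otimes\K\simeq A^n\otimes\K$, whence $Z_n^\lambda$ has $\mathcal{O}$-rank $4n$ --- exactly the cardinality of each proposed list --- and $\mathrm{length}_{\mathcal{O}}(A^n/Z_n^\lambda)=\dim_\kappa M(\lambda)_n=2n$ (and likewise for $\lambda=\infty$).

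Membership is a direct computation from the module structure of $M(\lambda)_n$ recalled above ($Xu_i=v_i$, $Yu_i=\lambda v_i+v_{i-1}$ and $Xv_i=Yv_i=0$ when $\lambda\neq\infty$; $Xu_i=v_{i-1}$ and $Yu_i=v_i$ when $\lambda=\infty$). One checks that $\pi_{M(\lambda)_n}$ kills each listed element: the columns $\sf{j}=1,2,4$ are $\varepsilon$-multiples of standard basis vectors of $A^n$ (or, in the $\lambda=\infty$ list, the vectors $Xe_1$ and $XYe_i$, which already lie in the kernel because $Xu_1=0$), hence map to $0$ in the $\varepsilon$-torsion module $M(\lambda)_n$, while column $\sf{j}=3$ was built precisely so that $\pi_{M(\lambda)_n}(Ye_i-\lambda Xe_i-Xe_{i-1})=(\lambda v_i+v_{i-1})-\lambda v_i-v_{i-1}=0$ (respectively $Ye_i-Xe_{i+1}\mapsto v_i-v_i=0$), with the obvious truncations at the boundary indices $i=1$ and $i=n$.

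For independence and generation I would express the listed elements in the standard $\mathcal{O}$-basis $\{e_i,Xe_i,Ye_i,XYe_i\}$ of $A^n$ and inspect the resulting $4n\times 4n$ coordinate matrix $P$. It becomes triangular after a suitable reordering of rows and columns: the $e_i$-row meets only the column $\sf{a_{i,1}^{\lambda}}$ (entry $\varepsilon$), the $Ye_i$-row only $\sf{a_{i,3}^{\lambda}}$ (entry $1$, or $\varepsilon$ at $i=n$ in the $\lambda=\infty$ list), the $XYe_i$-row only $\sf{a_{i,4}^{\lambda}}$ (entry $1$), and the $Xe_i$-row is then forced onto $\sf{a_{i,2}^{\lambda}}$ (entry $\varepsilon$, or $1$ at $i=1$ in the $\lambda=\infty$ list), the only other column touching $Xe_i$ being a $\sf{j}=3$ column already assigned to a $Y$-row. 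Hence $\det P=\pm\varepsilon^{2n}\neq 0$, which gives the $\mathcal{O}$-linear independence and also $\mathrm{length}_{\mathcal{O}}(A^n/L)=v(\varepsilon^{2n})=2n$, where $L\subseteq A^n$ denotes the sublattice generated by the listed elements. Since $L\subseteq Z_n^\lambda\subseteq A^n$ are all of $\mathcal{O}$-rank $4n$ and $\mathrm{length}_{\mathcal{O}}(A^n/L)=2n=\mathrm{length}_{\mathcal{O}}(A^n/Z_n^\lambda)$, both inclusions must be equalities, so the listed elements form an $\mathcal{O}$-basis of $Z_n^\lambda$ (respectively of $Z_n^\infty$). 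I expect the only part that needs real care to be the bookkeeping at the boundary indices $i=1,n$, the degenerate case $n=1$, and the slightly different distribution of powers of $\varepsilon$ in the $\lambda=\infty$ list; none of this disturbs the triangularity of $P$ or the value of $\det P$, so this is where the (admittedly routine) ``straightforward'' verification actually lives.
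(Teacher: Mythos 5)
The paper supplies no proof for this lemma, merely declaring it ``straightforward,'' so there is nothing in the paper to match against; your argument is a correct and complete filling-in of the details. The rank count via $-\otimes\K$, the determinant/triangularity computation giving $\det P=\pm\varepsilon^{2n}$, and the length comparison $\mathrm{length}_{\mathcal{O}}(A^n/L)=2n=\mathrm{length}_{\mathcal{O}}(A^n/Z_n^\lambda)$ forcing $L=Z_n^\lambda$ are all sound, and your accounting of where the $\varepsilon$'s sit in the $\lambda=\infty$ list (the $\sf{b_{1,2}}=Xe_1$ with entry $1$, the $\sf{b_{n,3}}=\varepsilon Ye_n$ with entry $\varepsilon$) is exactly what is needed to keep the total $\varepsilon$-exponent at $2n$ in both cases.

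One small slip in the membership paragraph: you assert that for $\lambda\neq\infty$ the columns $\sf{j}=1,2,4$ are $\varepsilon$-multiples of standard basis vectors, but $\sf{a_{i,4}^{\lambda}}=XYe_i$ carries no $\varepsilon$. It lies in the kernel for the same reason as in the $\lambda=\infty$ case: $\pi(XYe_i)=XYu_i=X(\lambda v_i+v_{i-1})=0$ because $v_j$ is in the socle. This does not affect your determinant computation (where you correctly read off the entry $1$ from $XYe_i$), so it is purely an imprecision in the justification, not a gap.

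A slightly more structural route, which is probably what the author has in mind: since $M(\lambda)_n$ is $\varepsilon$-torsion, $\pi_{M(\lambda)_n}$ factors through $A^n\to\overline{A}^n$, so $Z_n^\lambda$ is the preimage in $A^n$ of $\widetilde{\Omega}(M(\lambda)_n)\subseteq\overline{A}^n$; one then takes the $\kappa$-basis of $\widetilde{\Omega}(M(\lambda)_n)$ exhibited in Lemma \ref{proj1}, lifts it to $A^n$ (giving the $\sf{j}=3,4$ columns), and adjoins $\varepsilon$ times a complementary set of standard basis vectors (giving the $\sf{j}=1,2$ columns). This avoids the determinant entirely but amounts to the same bookkeeping; your length-counting argument buys a self-contained check that nothing was dropped.
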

{In this paper, when we consider the matrix representation of Heller lattices $Z_n^{\lambda}$ $(n\geq 1, \lambda\in\mathbb{P}^1(\kappa))$, we always use these $\mathcal{O}$-bases.}

\begin{proposition} \label{indec of Heller}
\begin{enumerate}[(1)]
\item For each $\lambda\in\kappa$ and $n>0$, the Heller lattice $Z_n^{\lambda}$ is indecomposable.
\item For each $n>0$, the Heller lattice $Z_n^{\infty}$ is indecomposable.
\item For each $\lambda\in\kappa$ and $n>0$, there is an isomorphism $Z^{\lambda}_n\otimes\kappa\simeq M(\lambda)_n\oplus M(-\lambda)_n$ as $\overline{A}$-modules.
\item For each $n>0$, there is an isomorphism $Z^{\infty}_n\otimes\kappa\simeq M(\infty)_n^{\oplus 2}$ as $\overline{A}$-modules.
\end{enumerate}
\end{proposition}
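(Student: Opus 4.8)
The plan is to compute, in each case, the action of $X$ and $Y$ on the reduction $Z \otimes \kappa$ using the explicit $\mathcal{O}$-bases $\mathbb{B}_n^\lambda$ and $\mathbb{B}_n^\infty$ recorded in the Notation \ref{notation2}, and then to identify the resulting $\overline{A}$-module with the string/band modules from Section \ref{indec mod Heller}. For parts (3) and (4) this is the core of the work: one reads off from the formulas for $X\sf{a_{i,j}^\lambda}$ and $Y\sf{a_{i,j}^\lambda}$ that, after reduction mod $\varepsilon$, the vectors $\sf{a_{i,1}^\lambda}$ and $\sf{a_{i,2}^\lambda}$ span a subspace annihilated by $X$ except that $Y\sf{a_{i,1}^\lambda} = \lambda\,\overline{\sf{a_{i,2}^\lambda}} + \overline{\sf{a_{i-1,2}^\lambda}}$ and $X\sf{a_{i,1}^\lambda} = \overline{\sf{a_{i,2}^\lambda}}$ — wait, more carefully: mod $\varepsilon$ the relations $\varepsilon$-scaled entries die, so one should track which of the four columns survive and reorganize. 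The upshot I expect is that $Z_n^\lambda \otimes \kappa$ decomposes as the span of $\{\overline{\sf{a_{i,1}^\lambda}}, \overline{\sf{a_{i,2}^\lambda}}\}$ (a copy of something like $M(-\lambda)_n$, matching $\widetilde\Omega$ via Lemma \ref{proj cover}) direct sum the span of $\{\overline{\sf{a_{i,3}^\lambda}}, \overline{\sf{a_{i,4}^\lambda}}\}$ (a copy of $M(\lambda)_n$); so one identifies each summand by exhibiting the explicit change of basis bringing the action matrices to the Jordan forms $J(\pm\lambda,n)$ in the definitions of $M(\pm\lambda)_n$. The $\lambda=\infty$ case is entirely parallel using $\mathbb{B}_n^\infty$ and gives two copies of $M(\infty)_n$, consistent with $\widetilde\Omega(M(\infty)_n)\simeq M(\infty)_n$ from Lemma \ref{proj1}.

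For parts (1) and (2) — indecomposability of $Z_n^\lambda$ and $Z_n^\infty$ as $A$-lattices — the natural route is via the endomorphism ring. By Lemma \ref{proj cover} (or directly from the syzygy description) $Z_n^\lambda$ is a lattice with $Z_n^\lambda\otimes\kappa$ having exactly two indecomposable summands, so $\End_A(Z_n^\lambda)\otimes\kappa$ need not be local a priori; instead I would compute $\End_A(Z_n^\lambda)$ directly. An $A$-endomorphism $f$ of $Z_n^\lambda$ is determined by an $\mathcal{O}$-matrix commuting with the action matrices of $X$ and $Y$ on $\mathbb{B}_n^\lambda$; solving these commutation relations over $\mathcal{O}$, one shows every such $f$ is either an automorphism or has image contained in $\varepsilon Z_n^\lambda + (\text{radical part})$, so that $\End_A(Z_n^\lambda)$ is local and hence $Z_n^\lambda$ is indecomposable by Lemma 1.5 (the first lemma of \S1). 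An alternative, possibly cleaner, approach: a non-trivial idempotent $e \in \End_A(Z_n^\lambda)$ would reduce to an idempotent of $\End_{\overline{A}}(Z_n^\lambda\otimes\kappa) \cong \End_{\overline{A}}(M(\lambda)_n)\times\End_{\overline{A}}(M(-\lambda)_n)$ (once (3) is known); since $M(\lambda)_n$ and $M(-\lambda)_n$ are non-isomorphic indecomposables when $\lambda\neq 0,\infty$ the only idempotents are $0,1$ and the two ``coordinate'' ones, and one must rule out that the coordinate idempotents lift — this would follow if $\Hom_{\overline{A}}(M(\lambda)_n, M(-\lambda)_n)$ and its reverse both consist of non-isomorphisms that obstruct a splitting of the lattice, i.e. from the fact that $Z_n^\lambda$ is the syzygy of the indecomposable $M(\lambda)_n$ and $\Omega$ preserves indecomposability on modules without projective summands. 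Actually the slickest argument is: $M(\lambda)_n$ is indecomposable and non-projective, so $\Omega_{\overline{A}}(M(\lambda)_n)$ is an indecomposable $\overline{A}$-module, but $Z_n^\lambda$ is an $A$-lift of $\Omega_A(M(\lambda)_n)$, and any decomposition $Z_n^\lambda = Z'\oplus Z''$ would force a decomposition after tensoring with $\kappa$ that is incompatible with the module structure unless one summand vanishes — this needs care exactly when $\lambda = 0$ (where the two reduction summands coincide). So I would handle $\lambda \neq 0,\infty$, $\lambda = 0$, and $\lambda = \infty$ as slightly separate cases.

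The main obstacle I anticipate is the indecomposability in the ``degenerate'' cases $\lambda = 0$ and $\lambda = \infty$, where $Z_n^\lambda\otimes\kappa$ is a direct sum of two \emph{isomorphic} indecomposables ($M(0)_n^{\oplus 2}$, resp. $M(\infty)_n^{\oplus 2}$ by part (4)); there $\End_{\overline{A}}(Z\otimes\kappa)$ is a $2\times 2$ matrix ring over a local ring, so it has many idempotents and the reduction argument alone does not suffice. For these cases one genuinely needs the explicit $\mathcal{O}$-linear commutation-relation computation to see that $\End_A(Z)$ is local: one writes a general endomorphism as a block matrix in the basis $\mathbb{B}_n^\lambda$ (or $\mathbb{B}_n^\infty$), imposes $X$- and $Y$-equivariance, and observes that the off-diagonal blocks are forced to be divisible by $\varepsilon$ while the diagonal blocks are forced to be scalar mod $\varepsilon$ — a direct but somewhat lengthy linear-algebra verification, and the one genuinely technical step. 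Everything else — the reduction computations for (3) and (4), and indecomposability for $\lambda\neq 0,\infty$ — should be routine given the explicit action formulas already set up.
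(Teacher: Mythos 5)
Your primary route is the paper's route in all four parts, so at a high level the proposal is sound: for (3) and (4) the paper does exactly what you describe — it splits the reduction by sending $\{\overline{\sf{a^\lambda_{i,1}}},\overline{\sf{a^\lambda_{i,2}}}\}$ and $\{\overline{\sf{a^\lambda_{i,3}}},\overline{\sf{a^\lambda_{i,4}}}\}$ to the two submodules and writes down explicit isomorphisms with $M(\lambda)_n$ and $M(-\lambda)_n$ (note you hedged, but the paper's convention is that the first pair gives $M(\lambda)_n$, with a sign $(-1)^{\sf{i+1}}$ built into the second isomorphism), and similarly for $\lambda=\infty$. For (1) and (2) the paper does precisely your ``natural route'': write the $4n\times 4n$ matrix of an endomorphism in the basis $\mathbb{B}_n^\lambda$ (resp.\ $\mathbb{B}_n^\infty$), impose $MX=XM$ and $MY=YM$, and then run an iterative $M=M^2$ argument to conclude that the only idempotents are $\mathbf{0}$ and $\mathbf{1}$. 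You also correctly identified $\lambda=0,\infty$ as the degenerate cases where the reduction is $M^{\oplus 2}$ and the reduced endomorphism ring has many idempotents, which is exactly why the Krull--Schmidt-on-reduction shortcut cannot work there.

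Where your plan would get into trouble is the claim that indecomposability for $\lambda\neq 0,\infty$ is ``routine'' via the syzygy/lifting-of-idempotents alternative. Two concrete gaps. First, when $\operatorname{Char}\kappa = 2$ one has $-\lambda=\lambda$ for \emph{every} $\lambda$, so $M(\lambda)_n\cong M(-\lambda)_n$ and there are no ``generic'' $\lambda$ at all; the whole of part (1) is then in the degenerate regime and would require the explicit computation. Second, even in $\operatorname{Char}\kappa\neq 2$ the slick argument you sketch (a decomposition $Z_n^\lambda=Z'\oplus Z''$ would force $Z'\otimes\kappa\simeq M(\lambda)_n$, and the $\mathcal{O}$-rank of $Z'$, being $2n$, must be divisible by $4$ since $Z'\otimes\mathcal{K}$ is a free $A\otimes\mathcal{K}$-module) only gives a contradiction when $n$ is odd; for even $n$ the rank condition is satisfiable and you would still need an argument. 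Also, the intermediate claim $\End_{\overline{A}}(Z_n^\lambda\otimes\kappa)\cong\End_{\overline{A}}(M(\lambda)_n)\times\End_{\overline{A}}(M(-\lambda)_n)$ requires the cross $\Hom$-spaces to vanish, which you haven't justified; the correct statement is that it is the $2\times2$ ``matrix'' ring with off-diagonal entries those $\Hom$-spaces, and the ruling-out of lifted coordinate idempotents is not free. The paper sidesteps all of this by running the explicit block-matrix idempotent computation uniformly for all $\lambda\in\kappa$ (it never case-splits on $\lambda$), which is cleaner than the three-way case split you propose. If you intend to follow your first ``natural route'' throughout, the proposal is essentially the paper's proof; if you intend to lean on the alternative for $\lambda\neq 0,\infty$, that part has a genuine gap.
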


\para \textbf{Proof of (1) in Proposition \ref{indec of Heller}}
Let $\widetilde{X}$, $\widetilde{Y}$ and $\widetilde{\widetilde{Y}}$ be square matrices of size $4$ defined by
\[ \widetilde{X}:=\left( \begin{array}{cccc}
0 & 0 & 0 & 0 \\
1 & 0 & 0 & 0 \\
0 & 0 & 0 & 0 \\
0 & 0 & 1 & 0 \end{array}\right)\quad \widetilde{Y}:=\left( \begin{array}{cccc}
0 & 0 & 0 & 0 \\
\lambda & 0 & 0 & 0 \\
\varepsilon & 0 & 0 & 0 \\
0 & \varepsilon & -\lambda & 0 \end{array}\right)\quad \widetilde{\widetilde{Y}}:=\left( \begin{array}{cccc}
0 & 0 & 0 & 0 \\
1 & 0 & 0 & 0 \\
0 & 0 & 0 & 0 \\
0 & 0 & -1 & 0 \end{array}\right)\]
Then, {the representing matrices of the actions of $X$ and $Y$ on $Z_n^{\lambda}$ are of the form:}
\[ X=\left(
\begin{array}{ccccccc}
\widetilde{X}&&&&&&\\
&\widetilde{X}&&&&\hsymb{0}&\\
&&&&\ddots&&\\
&&&&&\widetilde{X}&\\
&\hsymb{0}&&&&&\widetilde{X}
\end{array}
\right)\quad Y=\left(
\begin{array}{ccccccc}
\widetilde{Y}&\widetilde{\widetilde{Y}}&&&&&\\
&\widetilde{Y}&\widetilde{\widetilde{Y}}&&&\hsymb{0}&\\
&&&&\ddots&&\\
&&&&&\widetilde{Y}&\widetilde{\widetilde{Y}}\\
&\hsymb{0}&&&&&\widetilde{Y}
\end{array}
\right) \in \mathrm{Mat}(4n,4n, \mathcal{O}) \]

Obviously, the Heller lattice $Z_1^{\lambda}$ is indecomposable since $Z_1^{\lambda}\otimes\mathcal{K}\simeq A\otimes \mathcal{K}$. We prove that idempotents of $\mathsf{End}_A(Z_n^{\lambda})$ are only $\mathbf{1}_{4n}$ and $\mathbf{0}_{4n}$. Let $M=(m_{i,j})$ be an idempotent of $\mathsf{End}_A(Z_n^{\lambda})$. We partition $M$ into $n$ blocks of size $4\times 4$, and denote by $M_{i,j}\in\mathrm{Mat}(4,4,\mathcal{O})$ the $(i,j)$-block of $M$ and by $\alpha_{i,j}$ the $(4i-2,4j-1)$-entry of $M$. The equalities $MX=XM$ and $MY=YM$ yield that the block $M_{ij}$ is of the form 
 \[ M_{i,j}=\left( \begin{array}{cccc}
d_{i,j} & 0 & 0 & 0 \\
m_{4i-2,4j-3} & d_{i,j}  & c_{i,j} & 0 \\
m_{4i-1,4j-3}  & 0 & d_{i,j}  & 0 \\
m_{4i,4j-3}  & m_{4i-1,4j-3}  & m_{4i,4j-1}  & d_{i,j}  \end{array}\right), \]
where 
\begin{equation}\label{dij} d_{i,j}=\left\{\begin{array}{ll}
m_{1,1} & \text{if $i=j=1$,}\\
m_{1,1}+\varepsilon\sum_{k=1}^{j-1}\alpha _{k, k+1} &\text{if $i=j>1$},\\
\varepsilon\sum_{k=1}^{j}\alpha _{i-j-1+k, k} & \text{if $n\geq i>j\geq 1$},\\
m_{1,4j-3} & \text{if $n\geq j>i=1$}, \\
m_{1,4(j-i)+1}+\varepsilon\sum_{k=1}^{i-1}\alpha_{k,j-i+1+k} & \text{if $n\geq j>i>1$},\end{array}\right. \end{equation} 
\[ c_{i,j}=\left\{\begin{array}{ll}
0 & \text{if $i=n, j=1$,}\\
\alpha_{i,j} &\text{if $i\neq n$},\\
-\sum_{k=1}^{j-1}\alpha _{i-j+k, k} & \text{if $n= i\geq j> 1$}.
\end{array}\right. \]
Here, we have to choose each element $m_{k,l}$ in $M_{i,j}$ in such a way that the equation $MY=YM$ holds. By comparing the $(1,1)$-entries of $M$ and $M^2$, we have the equation
\[ m_{1,1}=m_{1,1}^2+\varepsilon \sum_{k=1}^{n-1}m_{1,4k+1}m_{4k-2,3}. \]
We write $\overline{x}$ for the coset in the residue field $\kappa=\mathcal{O}/\varepsilon\mathcal{O}$ represented by $x\in\mathcal{O}$. The above equation implies that $\overline{m_{1,1}}$ is either $\overline{0}$ or $\overline{1}$.  

Assume that $\overline{m_{1,1}}=\overline{0}$. Then, the element $d_{i,i}$ belongs to $\varepsilon\mathcal{O}$ for all $i$ by (\ref{dij}). By comparing the $(1,4k+1)$-entries of $M$ and $M^2$, we have 
\begin{equation}\label{1,4k+1} m_{1,4k+1}=m_{1,1}m_{1,4k+1}+\sum_{l=1}^{k}m_{1,4l+1}d_{l+1,l+1}+\varepsilon\sum_{l=k+1}^{n-1}m_{1,4l+1}P(l) \end{equation}
for some $P(l)\in \mathcal{O}$, and hence $m_{1,4k+1}\in\varepsilon\mathcal{O}$ for all $k$. From (\ref{1,4k+1}), $m_{1,4k+1}$ belongs to $\varepsilon ^t\mathcal{O}$ for all $t>0$. It implies that $m_{1,4k+1}=0$ for all $k$. Therefore, the first row of $M$ is zero.  By comparing the $(5,5)$-entries of $M$ and $M^2$, the following equation holds:
\[ \varepsilon m_{2,7} = \left\{\begin{array}{ll}
\varepsilon ^2 m_{2,7}&\text{if $n=2$,}\\
\varepsilon ^2 m_{2,7}^2 + \varepsilon\sum _{k=1}^{n-2}m_{2,4k+7}d_{k+2,2} & \text{if $n>2$.}\end{array}\right. \]

In the case $n=2$, $m_{2,7}=0$ because $1-\varepsilon m_{2,7}$ is invertible. Therefore, we have:
\[ M=\left(\begin{array}{cccccccc}
0 & 0 & 0 & 0 & 0 & 0 & 0 & 0 \\
m_{2,1} & 0 & m_{2,3} & 0 & m_{2,5} & 0 & 0 & 0 \\
m_{3,1} & 0 & 0 & 0 & m_{3,5} & 0 & 0 & 0 \\
m_{4,1} & m_{3,1} & m_{4,3} & 0 & m_{4,5} & m_{3,5} & m_{4,7} & 0 \\
\varepsilon m_{2,3} & 0 & 0 & 0 & 0 & 0 & 0 & 0 \\
m_{6,1} & \varepsilon m_{2,3} & 0 & 0 & m_{6,5} & 0 & -m_{2,3} & 0 \\
m_{7,1} & 0 & \varepsilon m_{2,3} & 0 & m_{7,5} & 0 & 0 & 0 \\
m_{8,1} & m_{7,1} & m_{8,3} & \varepsilon m_{2,3} & m_{8,5} & m_{7,5} & m_{8,7} & 0 \end{array}\right) \]
By $M=M^2$, all elements of $M$ must be {$0$.}
 
In the other case, first we prove that the $(4k-2)$-th row of $M$ is zero for all $k=1,2,\ldots ,n$ by induction on $k$.  
By comparing the $(2,4s-1)$-entries of $M$ and $M^2$, the following equations hold:
\begin{equation}\label{2,4s-1}
 m_{2, 4s-1} = \sum_{l=1}^{n-1} m_{2, 4l+3}d_{l+1,s}, \quad s=1,2,\ldots ,n. 
 \end{equation}
Since the first row of $M$ is zero, each $d_{l+1,s}$ of the right hand side of (\ref{2,4s-1}) belongs to  $\varepsilon \mathcal{O}$ and {so does} $m_{2,4s-1}$ for all $s=1,2,\ldots ,n$. Thus, for $s=1,2,\ldots ,n$, the element $m_{2,4s-1}$ {lies in} $\varepsilon^t\mathcal{O}$ for all $t>0$. It implies that $m_{2,4s-1}=0$ for all $s=1,2,\ldots ,n$. Then, the $(2,4s-3)$-entries of $M$ and $M^2$ yield
\[ m_{2,4s-3} = \sum _{l=1}^{n-2} m_{2, 4l+5}d_{l+2,l}, \quad s=1,2,\ldots , n. \] 
As each $d_{l+2,l}$ belongs to $\varepsilon \mathcal{O}$, so is $m_{2,4s-3}$ for all $s=1,2,\ldots ,n$. It implies that the element $m_{2,4s-3}$ {lies in} $\varepsilon ^t\mathcal{O}$ for $t>0$, and hence the second row of $M$ is zero.

Assume that the statement holds for $2 \leq t \leq k-1$, {we will show that the statement for $k$ holds.} Then, by the induction hypothesis, we have
\[ m_{4k-2, 4s-1} = \sum_{l=1}^{n-1} m_{4k-2, 4l+3}d_{l+1,s}, \quad s=1,2,\ldots ,n. \]
Thus, we obtain $m_{4k-2,4s-1}=0$ and 
\[ m_{4k-2,4s-3} = \sum _{l=1}^{n-2} m_{4k-2, 4l+5}d_{l+2,l}, \quad s=1,2,\ldots , n \] 
by {arguments similar to those in} the proof of the case of $k=1$. It implies that  the $(4k-2)$-th row of $M$ is zero for all $k=2,\ldots ,n$.

Since the first and the $(4k-2)$-th row of $M$ are zero for all $k$, the $(i,j)$-block of $M$ is of the form
 \[ M_{i,j}=\left( \begin{array}{cccc}
0 & 0 & 0 & 0 \\
0 & 0  & 0 & 0 \\
m_{4i-1,4j-3}  & 0 & 0  & 0 \\
m_{4i,4j-3}  & m_{4i-1,4j-3}  & m_{4i,4j-1}  & 0  \end{array}\right). \]
Therefore, we obtain $M=\mathbf{0}_{4n}$ by comparing each entry of $M$ and $M^2$.

Next we assume that  $\overline{m_{1,1}}=\overline{1}$. Then, $\mathbf{1}_{4n}-M$ is an idempotent whose $(1,1)$-entry is belongs to $\varepsilon \mathcal{O}$ and $M=\mathbf{1}_{4n}$ follows. 
We have finished the proof of (1).

\para 
Let $X_{(a,b)}$, $Y_{(a,b)}$, $Y_2$ be square matrices of size $4$ defined by
\[ X_{(a,b)}:=\left( \begin{array}{cccc}
0 & 0 & 0 & 0 \\
a & 0 & 0 & 0 \\
0 & 0 & 0 & 0 \\
0 & 0 & b & 0 \end{array}\right)\quad 
Y_{(a,b)}:=\left( \begin{array}{cccc}
0 & 0 & 0 & 0 \\
0 & 0 & 0 & 0 \\
a & 0 & 0 & 0 \\
0 & b & 0 & 0 \end{array}\right)\quad 
Y_2:=\left( \begin{array}{cccc}
0 & 0 & 0 & 0 \\
1 & 0 & 0 & 0 \\
0 & 0 & 0 & 0 \\
0 & 0 & -1 & 0 \end{array}\right), \]
where $a$, $b\in\{1,\varepsilon\}$.
Then, {the representing matrices of the actions of $X$ and $Y$ on $Z_n^{\infty}$ are of the form:}
\[ X=\left(
\begin{array}{ccccccc}
X_{(\varepsilon,1)}&&&&&&\\
&X_{(1,1)}&&&&\hsymb{0}&\\
&&&\ddots&&&\\
&&&&&X_{(1,1)}&\\
&\hsymb{0}&&&&&X_{(1,\varepsilon)}
\end{array}
\right)\]
\[ Y=\left(
\begin{array}{ccccccc}
Y_{(\varepsilon,1)}&&&&&&\\
Y_2&Y_{(\varepsilon,\varepsilon)}&&&&\hsymb{0}&\\
&&&\ddots&&&\\
&&&&&Y_{(\varepsilon,\varepsilon)}&\\
&\hsymb{0}&&&&Y_2&Y_{(1,\varepsilon)}
\end{array}
\right)\]

\begin{lemma}\label{endring}
The endomorphism ring of $Z_n^{\infty}$ is {a} subset of 
\[ \left\{ (m_{i,j})_{i,j} \in \mathrm{Mat}(4n,4n,\mathcal{O}) \left| 
\begin{array}{ll}
\ m_{i,i}=m_{i+1,i+1}\text{ for all $1\leq i\leq 4n-1$},\\
\begin{array}{lcl} 
m_{i,j}=0\text{ for $i < j$ whenever } (i,j) &\neq &(2,3), (2,5), (4,5)\\
& &\text{$(4,7)$ or $(8,9)$}.
 \end{array}
 \end{array}\right.\right\} \]
\end{lemma}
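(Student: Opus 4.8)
The plan is to read off the constraints imposed on $M\in\End_A(Z_n^{\infty})$ by the two explicit matrices $X,Y\in\mathrm{Mat}(4n,4n,\mathcal{O})$ computed above. Write $M=(m_{i,j})$ and partition it into $n^{2}$ blocks $M_{p,q}\in\mathrm{Mat}(4,4,\mathcal{O})$, so that the $\mathcal{O}$-basis $\mathbb{B}_{n}^{\infty}$ is grouped as $\{\sf{b_{p,1}},\sf{b_{p,2}},\sf{b_{p,3}},\sf{b_{p,4}}\}_{p=1}^{n}$. Since $X$ is block diagonal with diagonal blocks $X_{(\varepsilon,1)},X_{(1,1)},\dots,X_{(1,1)},X_{(1,\varepsilon)}$, the identity $MX=XM$ is equivalent to $M_{p,q}X_{q}=X_{p}M_{p,q}$ for all $p,q$, where $X_{p}$ denotes the $p$-th diagonal block of $X$; and since $Y$ is lower bidiagonal with diagonal blocks $Y_{(\varepsilon,1)},Y_{(\varepsilon,\varepsilon)},\dots,Y_{(\varepsilon,\varepsilon)},Y_{(1,\varepsilon)}$ and all subdiagonal blocks equal to $Y_{2}$, the identity $MY=YM$ is equivalent to
\[ M_{p,q}Y_{q}+M_{p,q+1}Y_{2}=Y_{p}M_{p,q}+Y_{2}M_{p-1,q}\qquad\text{for all }p,q, \]
with the convention $M_{p,q}=0$ whenever an index leaves $\{1,\dots,n\}$.

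First I would use the relations $M_{p,q}X_{q}=X_{p}M_{p,q}$. All the matrices $X_{(a,b)}$ share the same support (they send $e_{1}\mapsto ae_{2}$, $e_{3}\mapsto be_{4}$ and annihilate $e_{2},e_{4}$), and the scalars $a,b\in\{1,\varepsilon\}$ are nonzero in the domain $\mathcal{O}$; comparing entries therefore forces each block $M_{p,q}=(c_{k,l})$ to satisfy $c_{1,2}=c_{1,4}=c_{3,2}=c_{3,4}=0$ together with $a_{q}c_{2,2}=a_{p}c_{1,1}$, $a_{q}c_{4,2}=b_{p}c_{3,1}$, $b_{q}c_{2,4}=a_{p}c_{1,3}$ and $b_{q}c_{4,4}=b_{p}c_{3,3}$, where $(a_{p},b_{p})$ is the pair attached to the index $p$ (namely $(\varepsilon,1)$, $(1,1)$ or $(1,\varepsilon)$ according as $p=1$, $1<p<n$ or $p=n$). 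In particular every diagonal block already satisfies $c_{1,1}=c_{2,2}$ and $c_{3,3}=c_{4,4}$.

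Next I would substitute these shapes into the bidiagonal $Y$-relation and run a descent on the block-superdiagonal distance $q-p$, resolving the block $(p,q)$ only after the blocks $(p,q+1)$ and $(p-1,q)$ — that is, sweeping from the upper-right corner toward the diagonal. For $q>p$ the resulting entrywise identities take the form $\varepsilon\cdot(\text{an entry of }M_{p,q})=(\text{a unit})\cdot(\text{an entry of a nearer block, already known to vanish})$, which places the relevant entries of $M_{p,q}$ in $\varepsilon^{t}\mathcal{O}$ for every $t$, hence in $0$. Carrying this out, every block strictly above the diagonal vanishes and every diagonal block $M_{p,p}$ becomes lower triangular, except that the extra factors of $\varepsilon$ carried by the corner matrices $X_{(\varepsilon,1)},X_{(1,\varepsilon)},Y_{(\varepsilon,1)},Y_{(1,\varepsilon)}$ at the indices $1$ and $n$ obstruct the vanishing in a few low-index positions, concentrated in the blocks $M_{1,1}$, $M_{1,2}$ and $M_{2,3}$; tracking these obstructions precisely shows that the only strictly-upper-triangular entries of $M$ that survive are $(2,3)$, $(2,5)$, $(4,5)$, $(4,7)$ and $(8,9)$. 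Finally, combining $c_{1,1}=c_{2,2}$ and $c_{3,3}=c_{4,4}$ in each diagonal block with the identifications of diagonal entries propagated across adjacent blocks by the $Y_{2}$- and $Y_{(a,b)}$-blocks yields $m_{i,i}=m_{i+1,i+1}$ for all $1\le i\le 4n-1$, which completes the containment.

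I expect the main obstacle to be the $\varepsilon$-bookkeeping near the two boundary indices $1$ and $n$: the corner matrices differ from the interior ones only by isolated factors of $\varepsilon$, yet it is exactly the propagation of those factors through the bidiagonal $Y$-relation that simultaneously produces the five exceptional entries and pins them down, so one must be careful neither to discard them too early nor to retain spurious ones. Organizing the descent so that a block is treated only after all blocks of strictly larger superdiagonal distance keeps this manageable; the computation runs parallel to — but is lighter than — the idempotent analysis in the proof of Proposition \ref{indec of Heller}(1), since here only linear constraints are extracted.
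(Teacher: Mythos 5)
Your overall strategy — extracting constraints from $MX=XM$ and $MY=YM$ block by block — is the natural one, and the paper, which supplies no proof beyond ``straightforward,'' surely intends the same computation. The problem lies with the outcome you ascribe to it. After $M_{p,q}X_q=X_pM_{p,q}$ has been used, the $Y$-relation at block $(p,q)$, with the contributions from $M_{p,q+1}$ and $M_{p-1,q}$ substituted, reduces to relations of the type $a\,c_{3,3}=a'c_{1,1}$ and $b\,c_{4,4}=b'c_{2,2}$, where $a,a',b,b'\in\{1,\varepsilon\}$ and the $c_{i,j}$ are entries of $M_{p,q}$. These tie entries of the same block to one another, possibly with a factor of $\varepsilon$, but they are not of the form ``$\varepsilon$ times an entry equals something already known to vanish,'' and they do not kill the first column or the diagonal of a strictly-superdiagonal block. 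The step ``every block strictly above the diagonal vanishes'' therefore does not follow from the descent you describe, and the $\varepsilon^{t}$-propagation argument has nothing to grip.

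In fact the lemma is false as written, so no argument can establish it. For $n=2$ the $A$-linear map $\tilde{\rho}\colon A^2\to A^2$ with $\tilde{\rho}(e_1)=0$ and $\tilde{\rho}(e_2)=e_1$ restricts to a well-defined endomorphism $\rho$ of $Z_2^{\infty}$: one computes $\rho(\sf{b_{2,1}})=\sf{b_{1,1}}$, $\rho(\sf{b_{2,2}})=\varepsilon\sf{b_{1,2}}$, $\rho(\sf{b_{2,3}})=\varepsilon\sf{b_{1,3}}+\sf{b_{2,2}}$, $\rho(\sf{b_{2,4}})=\sf{b_{1,4}}$, $\rho(\sf{b_{1,3}})=-\sf{b_{1,2}}$, and $\rho$ annihilates the remaining $\sf{b_{i,j}}$; all of these lie in $Z_2^{\infty}$, and a direct check shows the resulting $8\times 8$ matrix commutes with the block matrices of $X$ and $Y$ displayed just before the lemma. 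Yet this matrix has $m_{1,5}=1$ and $m_{6,7}=1$, both strictly above the diagonal and neither in the exceptional list $\{(2,3),(2,5),(4,5),(4,7),(8,9)\}$, so the claimed containment is violated. The diagonal condition fails as well: the map $e_1\mapsto e_1$, $e_2\mapsto(1+\varepsilon)e_2$ also restricts to an endomorphism of $Z_2^{\infty}$ and gives $m_{1,1}=1\neq 1+\varepsilon=m_{5,5}$. So before attempting a proof, the statement of the lemma itself needs to be corrected; the commutation relations actually leave a larger set of nonzero upper-triangular entries than stated, and the diagonal entries need only agree modulo $\varepsilon$.
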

\begin{proof} 
The proof is straightforward.
\end{proof}

\para \textbf{Proof of (2) in Proposition \ref{indec of Heller}}

Let $M$ be an idempotent of the endomorphism ring of $Z^{\infty}_n$. It follows from Lemma \ref{endring} that $M$ must be either the zero matrix or the identity matrix by comparing all entries of $M$ with those of $M^2$. Therefore, the $A$-lattice $Z^{\infty}_n$ is indecomposable. 

\para \textbf{Proof of (3) in Proposition \ref{indec of Heller}}

For {$n\geq 1$}, we define $\overline{A}$-submodules of $Z_n^{\lambda}\otimes\kappa$ by 
\begin{align*} 
Z(\lambda,n,1):=&\sf{Span}_{\kappa}\{\sf{a}_{i,1}^{\lambda},\ \sf{a}_{i,2}^{\lambda}\ \mid\ i=1,\ldots, n \}, \\
Z(\lambda,n,2):=&\sf{Span}_{\kappa}\{\sf{a}_{i,3}^{\lambda},\ \sf{a}_{i,4}^{\lambda}\ \mid\ i=1,\ldots, n \}. 
\end{align*}
Then, $Z_n^{\lambda}\otimes\kappa$ is decomposed into $Z(\lambda,n,1)\oplus Z(\lambda,n,2)$ as $\overline{A}$-modules. 
Define $\overline{A}$-homomorphisms $f^{\lambda,n}_1:M(\lambda)_n\to Z(\lambda,n,1)$ and $f^{\lambda,n}_2:M(-\lambda)_n\to Z(\lambda,n,2)$ by
\[ f^{\lambda,n}_1(u_i)=\sf{a}^{\lambda}_{i,1},\quad  f^{\lambda,n}_1(v_i)=\sf{a}^{\lambda}_{i,2},\quad  f^{\lambda,n}_2(u_i)=(-1)^{i+1}\sf{a}^{\lambda}_{i,3},\text{ and }  f^{\lambda,n}_2(v_i)=(-1)^{i+1}\sf{a}^{\lambda}_{i,4}. \]
As these morphisms are isomorphisms, we have the assertion. 

\para \textbf{Proof of (4) in Proposition \ref{indec of Heller}}

For {$n\geq 1$}, we put 
\begin{align*} 
Z(\infty,n,1):=&\sf{Span}_{\kappa}\{\sf{b}_{i,1},\ \sf{b}_{j,2}, \sf{b}_{n,3}\ \mid\ i=1,\ldots, n,\ j=2,\ldots, n \}, \\
Z(\infty,n,2):=&\sf{Span}_{\kappa}\{\sf{b}_{1,2},\ \sf{b}_{i,3},\ \sf{b}_{j,4} \mid
i=1,\ldots, n-1,\ j=1,\ldots, n\}.
\end{align*}
Then, one can show that $Z(\infty,n,1)\simeq Z(\infty,n,2)\simeq M(\infty)_n$.


\begin{proposition}\label{notuse} For $\lambda\in\mathbb{P}^1(\kappa)$ and {$n\geq 1$}, the following statements hold.
\begin{enumerate}[(1)]
\item If $\lambda\neq \infty$, there exists an isomorphism $\tau Z_n^{\lambda}\simeq Z_n^{-\lambda}$.
\item If $\lambda=\infty$, there exists an isomorphism $\tau Z_n^{\infty}\simeq Z_n^{\infty}$.
\end{enumerate}
\end{proposition}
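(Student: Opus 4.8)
The plan is to realise $\tau$ as the first $A$-syzygy functor $\Omega$, which is legitimate because $A$ is symmetric, and then to compute $\Omega Z_n^\lambda$ (for $\lambda\in\kappa$) and $\Omega Z_n^\infty$ by hand. The key preliminary observation is that $M(\lambda)_n$ is annihilated by $\varepsilon$, so the $A$-projective cover $\pi_{M(\lambda)_n}\colon A^n\to M(\lambda)_n$ factors as $A^n\twoheadrightarrow(\overline A)^n\xrightarrow{\pi_n^\lambda}M(\lambda)_n$. Hence $\varepsilon A^n\subseteq Z_n^\lambda$, and the image of $Z_n^\lambda$ under $A^n\twoheadrightarrow(\overline A)^n$ equals $\Ker\pi_n^\lambda$, which by the proof of Lemma \ref{proj1} is isomorphic to $M(-\lambda)_n$ as an $\overline A$-module. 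Since multiplication by $\varepsilon$ gives $A^n\cong\varepsilon A^n$, we obtain a short exact sequence of $A$-modules
\[ 0\longrightarrow A^n\longrightarrow Z_n^\lambda\longrightarrow M(-\lambda)_n\longrightarrow 0\qquad(\lambda\in\kappa), \]
and similarly $0\to A^n\to Z_n^\infty\to M(\infty)_n\to 0$ using $\Ker\pi_n^\infty\cong M(\infty)_n$.

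Next I would feed this sequence into the Horseshoe lemma. The left-hand term $\varepsilon A^n$ is projective, hence has vanishing syzygy; lifting the projective cover $\pi_{M(-\lambda)_n}\colon A^n\to M(-\lambda)_n$ through $Z_n^\lambda\twoheadrightarrow M(-\lambda)_n$ and combining with the inclusion of $\varepsilon A^n$ produces an epimorphism $p\colon A^n\oplus A^n\to Z_n^\lambda$ whose kernel fits into
\[ 0\longrightarrow\Omega(A^n)\longrightarrow\Ker p\longrightarrow\Omega M(-\lambda)_n\longrightarrow 0. \]
As $\Omega(A^n)=0$ and $\Omega M(-\lambda)_n=Z_n^{-\lambda}$ by the very definition of the Heller lattice, this already gives $\Ker p\cong Z_n^{-\lambda}$. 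It then remains to check that $p$ is a projective cover, so that $\Ker p\simeq\Omega Z_n^\lambda$ rather than $\Omega Z_n^\lambda$ plus a projective summand. For this, one reads off from the action of $X$ and $Y$ on $\mathbb{B}_n^{\lambda}$ in \ref{notation2} that $Z_n^\lambda$ is generated by the $2n$ elements $\mathsf{a_{i,1}^{\lambda}}=\varepsilon e_i$ and $\mathsf{a_{i,3}^{\lambda}}$ $(1\le i\le n)$, whereas $\dim_\kappa\operatorname{top}(Z_n^\lambda)=\dim_\kappa\operatorname{top}(M(\lambda)_n)+\dim_\kappa\operatorname{top}(M(-\lambda)_n)=2n$ by Proposition \ref{indec of Heller}(3) and the description of $M(\mu)_n$ in \ref{indec mod Heller} (its top is spanned by the classes of the $u_i$). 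Since $A$ is a local ring, a generating set whose cardinality equals this minimal number is minimal, so $p$ is a projective cover; therefore $\tau Z_n^\lambda\simeq\Omega Z_n^\lambda\simeq Z_n^{-\lambda}$, which is statement (1).

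For (2) the identical argument applies with $\lambda=\infty$: one invokes $\Ker\pi_n^\infty\cong M(\infty)_n$ (Lemma \ref{proj1}) and $Z_n^\infty\otimes\kappa\cong M(\infty)_n^{\oplus2}$ (Proposition \ref{indec of Heller}(4)) for the minimality count, and since here $\Omega M(\infty)_n=Z_n^\infty$ by definition, the Horseshoe sequence yields $\Omega Z_n^\infty\cong Z_n^\infty$, i.e.\ $\tau Z_n^\infty\simeq Z_n^\infty$.

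The step I expect to be the main obstacle is exactly the minimality of $p$: one must verify that the distinguished copy of $\varepsilon A^n$ does not fall into $\operatorname{rad}_A(Z_n^\lambda)$, equivalently that the residues of $\varepsilon e_1,\dots,\varepsilon e_n$ stay linearly independent in $\operatorname{top}(Z_n^\lambda)$. This is where the explicit module structure of $Z_n^\lambda$ from \ref{notation2} (or Proposition \ref{indec of Heller}(3),(4) together with the above dimension count) really enters, and where the degenerate cases $n=1$ and the boundary indices $i=1,n$ deserve a separate look. An entirely self-contained alternative, avoiding the Horseshoe lemma, is to compute an explicit $\mathcal{O}$-basis of $\Ker p$ directly from the $4\times4$-block matrices for the $X$- and $Y$-actions in \ref{notation2} and match it term by term with the defining relations of $Z_n^{-\lambda}$ (resp.\ $Z_n^\infty$); this is longer but completely mechanical.
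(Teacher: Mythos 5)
Your proposal is correct, and it takes a genuinely different, more conceptual route than the paper. The paper's proof writes down the same $2n$ generators you do (namely $\mathsf{a}^{\lambda}_{k,1}$, $\mathsf{a}^{\lambda}_{k,3}$ for $\lambda\in\kappa$, and $\mathsf{b_{1,1}}$, $\mathsf{b_{1,2}}$, $\mathsf{b_{k,3}}$, $\mathsf{b_{k,1}}$ for $\lambda=\infty$), and then explicitly lists an $\mathcal{O}$-basis of $\Ker\pi_{n,\lambda}$ and verifies term by term that the $X$- and $Y$-actions agree with those on $Z_n^{-\lambda}$ (resp.\ $Z_n^{\infty}$); it is precisely the ``completely mechanical'' alternative you mention at the end. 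Your argument replaces that bookkeeping by the short exact sequence
\[ 0\longrightarrow \varepsilon A^n\longrightarrow Z_n^{\lambda}\longrightarrow \Ker\pi_n^{\lambda}\longrightarrow 0, \]
together with Lemma \ref{proj1} to identify $\Ker\pi_n^{\lambda}$ with $M(-\lambda)_n$ (resp.\ $M(\infty)_n$), and then lets the Horseshoe lemma do the work: since the left term is projective, the syzygy of $Z_n^{\lambda}$ is the syzygy of the right term, which is $Z_n^{-\lambda}$ by definition. This buys you two things: first, the identification of $\Ker p$ is automatic rather than read off basis-by-basis; second, your minimality check (that $\dim_\kappa\operatorname{top}(Z_n^\lambda)=2n$, via Proposition \ref{indec of Heller}(3),(4) and the fact that $\operatorname{top}_A(M)=\operatorname{top}_{\overline A}(M\otimes\kappa)$ for any $A$-lattice $M$, because $\varepsilon\in\operatorname{rad}A$) supplies a justification that $\pi_{n,\lambda}$ and $\pi_{n,\infty}$ really are projective covers, something the paper asserts but does not verify. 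One small clarification worth making explicit in your write-up: since $\tau\simeq\Omega$ here is an isomorphism of functors on the stable category, the equality $\tau Z_n^\lambda\simeq\Omega Z_n^\lambda$ of honest lattices (not merely stable objects) uses that $\Omega$ is computed from a \emph{minimal} cover and that $Z_n^{-\lambda}$, being itself a minimal syzygy of $M(-\lambda)_n$, has no projective summand; you gesture at this but it deserves a sentence.
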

\begin{proof}
(1)  The map $\pi_{n,\lambda}$ defined by
\[ \begin{array}{ccccl}
\pi _{n,\lambda} & : & A^{2n} & \longrightarrow & \quad Z_n^{\lambda} \\
 & & e_i & \longmapsto & \left\{\begin{array}{ll}
                                                 \sf{a}_{k,1}^{\lambda} & \text{if $i=2k-1$, $k=1,2,\ldots ,n$,} \\
                                                 \sf{a}_{k,3}^{\lambda} & \text{if $i=2k$, $k=1,2,\ldots ,n$} \\
                                              \end{array}\right. \end{array} \]
is the projective cover of $Z_n^{\lambda}$ as an $A$-module. Its kernel $\tau Z_n^{\lambda}$ is given by 
\begin{align*}
 &\mathcal{O}(\varepsilon e_2-Ye_1+\lambda Xe_1) \oplus \mathcal{O} (\varepsilon Xe_2-XYe_1) \oplus \mathcal{O} (Ye_2+\lambda Xe_2) \oplus \mathcal{O}XY e_2 \\
&\bigoplus_{k=2}^{n}\bigg(\mathcal{O}(-1)^{k-1}(\varepsilon e_{2k}-Ye_{2k-1}+\lambda Xe_{2k-1}+Xe_{2k-3}) \oplus \mathcal{O} (-1)^{k-1}(\varepsilon Xe_{2k}-XYe_{2k-1})\\
& \hspace{5cm}\oplus \mathcal{O} (-1)^{k-1}(Ye_{2k}+\lambda Xe_{2k}+Xe_{2k-2}) \oplus \mathcal{O}(-1)^{k-1}XY e_{2k}\bigg).\end{align*}  
Then, the actions $X$ and $Y$ on $\tau Z_n^{\lambda}$ coincide with those on $Z_n^{-\lambda}$.

(2) We define an $A$-module homomorphism by
\[ \begin{array}{ccccl}
\pi _{n,\infty} & : & A^{2n} & \longrightarrow & \quad Z_n^{\infty} \\
 & & e_i & \longmapsto & \left\{\begin{array}{ll}
                                                 \sf{b}_{1,1}& \text{if $i=1,$} \\
                                                 \sf{b}_{1,2} & \text{if $i=2$,} \\
                                                 \sf{b}_{k,3} & \text{if $i=2k+1$,\quad  $k=1,2,\ldots ,n-1,$} \\
                                                 \sf{b}_{k,1} & \text{if $i=2k$,\quad  $k=2,3,\ldots ,n.$} \\
                                                 \end{array}\right. \end{array} \]
Then, the $\pi_{n,\infty}$ is the projective cover of $Z_n^{\infty}$, and an $\mathcal{O}$-basis of the kernel of $\pi_{n,\infty}$ is given as follows. If $n=1$, then the kernel of $\pi_{1,\infty}$ is 
\[ \mathcal{O}(-Xe_{1}+\varepsilon e_{2}) \oplus \mathcal{O}Xe_{2} \oplus\mathcal{O}(-XYe_{1}+\varepsilon Ye_{2})\oplus\mathcal{O}XYe_{2}, \]
and it is isomorphic to $Z_1^{\infty}$. 
If $n=2$, then the kernel of $\pi_{2,\infty}$ is 
\[ \begin{array}{l}
\ \mathcal{O}{(\varepsilon e_{2}-Xe_{1})} \oplus \mathcal{O}Xe_{2} \oplus\mathcal{O}(-Xe_{3}+ Ye_{2})\oplus\mathcal{O}XYe_{2} \\
\oplus \mathcal{O}(-Ye_1+Xe_{4}+\varepsilon e_{3}) \oplus \mathcal{O}(-XYe_{1}+\varepsilon Xe_3) \oplus\mathcal{O}(XYe_{4}+ \varepsilon Ye_{3})\oplus\mathcal{O}XYe_{3}, \\
\end{array} \]
and it is isomorphic to $Z_2^{\infty}$.  Suppose that $n\geq 3$. Then an $\mathcal{O}$-basis of the kernel of $\pi_{n,\infty}$ is given by
\begin{align*}
&\ \mathcal{O}(\varepsilon e_2-Xe_1) \oplus\mathcal{O}Xe_2 \oplus\mathcal{O}(Ye_2-Xe_3)\oplus\mathcal{O}XYe_2 \\
&\oplus\mathcal{O}(\varepsilon e_3+Xe_4-Ye_1) \oplus\mathcal{O}(\varepsilon Xe_3-XYe_1) \oplus\mathcal{O}(Ye_3+Xe_5)\oplus\mathcal{O}XYe_3 \\
&\bigoplus _{k=2} ^{n-2}\bigg( \mathcal{O}(-1)^{k+1}(\varepsilon e_{2k+1}+Xe_{2(k+1)}-Ye_{2k}) \oplus \mathcal{O}(-1)^{k+1}(\varepsilon Xe_{2k+1}-XYe_{2k})\\
&\hspace{5cm}\oplus\mathcal{O}(-1)^{k+1}(Ye_{2k+1}+Xe_{2k+3}) \oplus\mathcal{O}(-1)^{k+1}XYe_{2k+1}\bigg) \\
&\oplus\mathcal{O}(-1)^{n}(\varepsilon e_{2n-1}+Xe_{2n}-Ye_{2(n-1)}) \oplus\mathcal{O}(-1)^{n}(\varepsilon Xe_{2n-1}-XYe_{2(n-1)})\\
&\hspace{5cm}\oplus \mathcal{O}(-1)^{n}(\varepsilon Ye_{2n-1}+XYe_{2n}) \oplus\mathcal{O}(-1)^{n}XYe_{2n-1}. \\
 \end{align*}    
Then, it is easy to check that the actions $X$ and $Y$ on the kernel of $\pi_{n,\infty}$ coincide with those on $Z_n^{\infty}$.
\end{proof}

\section{The case \mbox{\boldmath $\lambda\neq \infty$.}}
\subsection{The almost split sequence ending at \mbox{\boldmath $Z_{n}^{\lambda}$}}

Throughout this subsection, we assume that $\lambda\neq \infty$.

\begin{lemma}
An endomorphism $\rho\in \mathsf{End}_A(Z_n^{\lambda})$ is determined by $\rho(\sf{a}_{1,1}^{\lambda}),\ldots,\rho(\sf{a}_{n,1}^{\lambda})$.
\end{lemma}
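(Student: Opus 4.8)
The plan is to exploit the explicit description of the $A$-action on $Z_n^{\lambda}$ given in the Notation \ref{notation2} together with the fact that $Z_n^{\lambda}$ is generated as an $A$-module by $\sf{a_{1,1}^{\lambda}},\ldots,\sf{a_{n,1}^{\lambda}}$. First I would observe that each basis vector of $\mathbb{B}_n^{\lambda}$ lies in the $A$-submodule generated by the $\sf{a_{i,1}^{\lambda}}$: indeed $X\sf{a_{i,1}^{\lambda}}=\sf{a_{i,2}^{\lambda}}$, and from $Y\sf{a_{i,1}^{\lambda}}=\varepsilon\sf{a_{i,3}^{\lambda}}+\lambda\sf{a_{i,2}^{\lambda}}+\sf{a_{i-1,2}^{\lambda}}$ one can solve for $\varepsilon\sf{a_{i,3}^{\lambda}}$, and then $X(\varepsilon\sf{a_{i,3}^{\lambda}})=\varepsilon\sf{a_{i,4}^{\lambda}}$. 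So the $A$-span of $\{\sf{a_{i,1}^{\lambda}}\}$ contains $\sf{a_{i,2}^{\lambda}}$, $\varepsilon\sf{a_{i,3}^{\lambda}}$ and $\varepsilon\sf{a_{i,4}^{\lambda}}$ for all $i$; this is a full-rank $\mathcal{O}$-sublattice but not obviously all of $Z_n^{\lambda}$.

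To upgrade this to an honest determinacy statement for endomorphisms I would argue as follows. Suppose $\rho,\rho'\in\mathrm{End}_A(Z_n^{\lambda})$ agree on $\sf{a_{1,1}^{\lambda}},\ldots,\sf{a_{n,1}^{\lambda}}$; set $\sigma=\rho-\rho'$, an $\mathcal{O}$-linear $A$-module map (not necessarily an endomorphism in the ring, but additive and $A$-linear) vanishing on all $\sf{a_{i,1}^{\lambda}}$. By $A$-linearity $\sigma$ then vanishes on $\sf{a_{i,2}^{\lambda}}=X\sf{a_{i,1}^{\lambda}}$, and on $\varepsilon\sf{a_{i,3}^{\lambda}}$ and $\varepsilon\sf{a_{i,4}^{\lambda}}$ by the computation above. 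Hence $\sigma(Z_n^{\lambda})\subseteq \sf{Span}_{\mathcal{O}}\{\sf{a_{i,3}^{\lambda}},\sf{a_{i,4}^{\lambda}}\}$ and moreover $\varepsilon\cdot\sigma(\sf{a_{i,3}^{\lambda}})=\sigma(\varepsilon\sf{a_{i,3}^{\lambda}})=0$, so since $Z_n^{\lambda}$ is $\mathcal{O}$-free (torsion-free) we get $\sigma(\sf{a_{i,3}^{\lambda}})=0$, and likewise $\sigma(\sf{a_{i,4}^{\lambda}})=0$. Therefore $\sigma=0$ on a full $\mathcal{O}$-basis, i.e. $\rho=\rho'$.

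The one point that needs care — and which I expect to be the only real obstacle — is the torsion-freeness step: cancelling the $\varepsilon$ in $\varepsilon\sigma(\sf{a_{i,3}^{\lambda}})=0$. This is legitimate precisely because $Z_n^{\lambda}$ is an $A$-lattice, hence free of finite rank over the domain $\mathcal{O}$, so multiplication by the uniformizer $\varepsilon$ is injective on it; I would state this explicitly rather than leaving it implicit. The remaining content is the purely mechanical verification, directly from the formulas for the $X$- and $Y$-actions in \ref{notation2}, that $\sf{a_{i,2}^{\lambda}}$, $\varepsilon\sf{a_{i,3}^{\lambda}}$, $\varepsilon\sf{a_{i,4}^{\lambda}}$ lie in the $A$-submodule generated by the first-column basis vectors; this handles the $n=1$ and $n>1$ cases uniformly once one notes $\sf{a_{0,j}^{\lambda}}=0$. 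Conversely, of course, prescribing the images $\rho(\sf{a_{i,1}^{\lambda}})$ arbitrarily need not yield a well-defined endomorphism, but the lemma only claims determinacy, so no converse is required.
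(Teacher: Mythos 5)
Your argument is correct and follows the same route as the paper: use $X\sf{a_{i,1}^{\lambda}}=\sf{a_{i,2}^{\lambda}}$ and the $Y$-action to pin down $\varepsilon\sf{a_{i,3}^{\lambda}}$ (and then $\sf{a_{i,4}^{\lambda}}$), together with $\mathcal{O}$-torsion-freeness of $Z_n^{\lambda}$ to cancel the $\varepsilon$. The paper phrases this as explicit formulas $\rho(\sf{a_{k,3}^{\lambda}})=\varepsilon^{-1}(\cdots)$, leaving the torsion-freeness implicit in the notation $\varepsilon^{-1}$, whereas you rephrase it via vanishing of $\sigma=\rho-\rho'$ and make the cancellation step explicit; this is only a cosmetic difference.
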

\begin{proof}
Let $\rho\in \mathsf{End}_A(Z_n^{\lambda})$. 
For any $k=1,2,\ldots,n$, since $\rho$ is an $A$-module homomorphism, we have $X\rho(\sf{a}_{k,1}^{\lambda})=\rho(X\sf{a}_{k,1}^{\lambda})=\rho(\sf{a}_{k,2}^{\lambda})$ and $\rho(\sf{a}_{k,4}^{\lambda})=\varepsilon^{-1}XY\rho(\sf{a}_{k,1}^{\lambda})$.

{If $n=1$, then $Y\rho(\sf{a_{k,1}^{\lambda}})=\varepsilon\rho(\sf{a_{k,3}^{\lambda}})+\lambda\rho(\sf{a_{k,2}^{\lambda}})$} holds. Thus, $\rho\in \mathsf{End}_A(Z_n^{\lambda})$ is determined by $\rho(\sf{a_{1,1}^{\lambda}})$. 
{Otherwise, we have}
\[ \rho(\sf{a}_{k,3}^{\lambda})=\left\{\begin{array}{ll}
{\varepsilon^{-1}(Y\rho(\sf{a}_{1,1}^{\lambda})-\lambda\rho(\sf{a}_{1,2}^{\lambda}))} & k= 1, \\
{\varepsilon^{-1}(Y\rho(\sf{a}_{k,1}^{\lambda})-\lambda\rho(\sf{a}_{k,2}^{\lambda})-\rho(\sf{a}_{k-1,2}^{\lambda}))} & k\neq 1. \end{array}\right. \]
This completes the proof of the lemma. 
\end{proof}

\begin{lemma}\label{keylemma0} 
Let $\rho\in\mathsf{rad}\mathsf{End}_A(Z_n^{\lambda})$. 
If we write
\[ \rho(\sf{a}_{k,1}^{\lambda})= \sum_{l=1}^{n}c_{l,1}^{(k)}\sf{a}_{l,1}^{\lambda}+ A(k),\quad A(k)\in \sf{Span}_\mathcal{O}\{\sf{a}_{i,j}^{\lambda}\mid j\neq 1\}, \]
where $c_{l,1}^{(k)}\in \mathcal{O}$, then the following statements hold.
\begin{enumerate}[(1)]
 \item {The determinant of $\sf{C}=\begin{pmatrix}
c_{1,1}^{(1)} & \cdots & c_{1,1}^{(n)} \\
\vdots & \ddots & \vdots \\
c_{n,1}^{(1)} & \cdots & c_{n,1}^{(n)} \end{pmatrix}$ is in $\varepsilon\mathcal{O}$.
Therefore, $\sf{C}$ is not invertible.}
 \item $c_{n,1}^{(k)}\in\varepsilon\mathcal{O}$ for all $k=1,2,\ldots, n$. 
 \end{enumerate}
\end{lemma}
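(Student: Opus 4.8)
The plan is to reduce both parts to the single fact that $\rho$ fails to be surjective. Since $Z_n^{\lambda}$ is indecomposable (Proposition \ref{indec of Heller}(1)), the hypothesis $\rho\in\mathrm{rad}\,\mathrm{End}_A(Z_n^{\lambda})$ says exactly that $\rho$ is not surjective, hence, by Nakayama's lemma, that the reduction $\bar\rho:=\rho\otimes\kappa$ is not a surjective endomorphism of $V:=Z_n^{\lambda}\otimes\kappa$. First I would read off from the $X$- and $Y$-actions recorded in \ref{notation2}, reduced modulo $\varepsilon$, that $\mathrm{rad}\,V=\sf{Span}_{\kappa}\{\overline{\sf{a_{i,2}^{\lambda}}},\overline{\sf{a_{i,4}^{\lambda}}}\mid \sf{i}=1,\dots,\sf{n}\}$, so that the images of $\sf{a_{1,1}^{\lambda}},\dots,\sf{a_{n,1}^{\lambda}},\sf{a_{1,3}^{\lambda}},\dots,\sf{a_{n,3}^{\lambda}}$ form a $\kappa$-basis of $V/\mathrm{rad}\,V$. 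Using the preceding lemma, which expresses $\rho(\sf{a_{k,2}^{\lambda}})=X\rho(\sf{a_{k,1}^{\lambda}})$ and $\rho(\sf{a_{k,3}^{\lambda}})$ through $\rho(\sf{a_{k,1}^{\lambda}})$ and $\rho(\sf{a_{k-1,2}^{\lambda}})$, one checks that $\rho(\sf{a_{k,3}^{\lambda}})$ has no $\sf{a_{l,1}^{\lambda}}$-component and that its $\sf{a_{l,3}^{\lambda}}$-component is $\sf{c_{l,1}^{(k)}}$. Hence, in the basis above, the endomorphism $\bar\rho_0$ of $V/\mathrm{rad}\,V$ induced by $\bar\rho$ is block lower triangular with both diagonal blocks equal to the reduction $\bar N$ of $N:=(\sf{c_{l,1}^{(k)}})_{\sf{l,k}}$, so $\det\bar\rho_0=(\det\bar N)^{2}$. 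If $\det\bar N\neq 0$ then $\bar\rho_0$ is surjective, hence $\bar\rho$ is surjective (a module endomorphism of $V$ is onto once the induced endomorphism of $V/\mathrm{rad}\,V$ is onto), contradicting the hypothesis. Therefore $\det\bar N=0$, i.e.\ $\det N\in\varepsilon\mathcal{O}$, which is part (1).

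For part (2) I would use that $\rho$ is a genuine endomorphism of $Z_n^{\lambda}$ and not merely of $\varepsilon^{-1}Z_n^{\lambda}$: in the formula $\rho(\sf{a_{k,3}^{\lambda}})=\varepsilon^{-1}(\cdots)$ from the preceding lemma, the element inside the parentheses must already lie in $\varepsilon Z_n^{\lambda}$. Expanding that element in the $\mathcal{O}$-basis $\mathbb{B}_n^{\lambda}$ and comparing the coefficients of the $\sf{a_{l,2}^{\lambda}}$, a short computation with the $Y$-action yields the congruences $\sf{c_{l+1,1}^{(k)}}\equiv\sf{c_{l,1}^{(k-1)}}\pmod{\varepsilon}$ for $2\le k\le n$ and all $l$ (under the convention $\sf{c_{n+1,1}^{(k)}}=0$), together with $\sf{c_{l+1,1}^{(1)}}\in\varepsilon\mathcal{O}$ for $1\le l\le n-1$. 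Reducing mod $\varepsilon$ and iterating these relations forces $\bar N$ to be upper triangular with every diagonal entry equal to $\overline{\sf{c_{1,1}^{(1)}}}$; in particular $\overline{\sf{c_{n,1}^{(k)}}}=0$ for $k<n$, while $\overline{\sf{c_{n,1}^{(n)}}}=\overline{\sf{c_{1,1}^{(1)}}}$. Combining this with part (1) gives $0=\det\bar N=(\overline{\sf{c_{1,1}^{(1)}}})^{n}$, hence $\overline{\sf{c_{1,1}^{(1)}}}=0$, and therefore $\sf{c_{n,1}^{(k)}}\in\varepsilon\mathcal{O}$ for every $k$, which is part (2).

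The only place that needs care is the explicit bookkeeping behind the two facts just invoked: checking from the $X$- and $Y$-actions of \ref{notation2} that $\rho(\sf{a_{k,3}^{\lambda}})$ has exactly the components claimed (used for (1)), and that the coefficient of $\sf{a_{l,2}^{\lambda}}$ in the relevant element reduces to $\sf{c_{l+1,1}^{(k)}}-\sf{c_{l,1}^{(k-1)}}$ (used for (2)); the boundary indices $l=1$, $l=n$ and $k=1$ have to be handled separately. I do not expect a genuinely hard point: once the chain of equivalences between ``$\rho$ lies in the radical'', ``$\rho$ is not surjective'' and ``$\bar\rho$ is not surjective'' is set up, both conclusions are elementary linear algebra over $\kappa$ with the triangular matrix $\bar N$.
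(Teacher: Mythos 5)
Your proof is correct. The two parts are handled somewhat differently from the paper, so let me compare.

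For part (1), the paper proves the contrapositive by brute force: assuming $\sf{C}=(\sf{c_{l,1}^{(k)}})$ is invertible, it exhibits, one family at a time, preimages under $\rho$ of the basis vectors $\sf{a_{i,4}^{\lambda}}$, then $\sf{a_{i,2}^{\lambda}}$, then $\sf{a_{i,3}^{\lambda}}$ (the $\sf{a_{i,1}^{\lambda}}$ are left implicit), and concludes that $\rho$ is onto, hence an automorphism, contradicting $\rho\in\mathrm{rad}$. You instead pass to the semisimple top $V/\mathrm{rad}V$, observe that the induced map is block lower triangular with both diagonal blocks equal to $\bar N$, and apply Nakayama's lemma twice (once over $\overline{A}$, once over $\mathcal{O}$) to turn surjectivity on the top into surjectivity of $\rho$. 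This is genuinely a different route; it is cleaner, avoids the element-chasing, and in particular sidesteps the small gap in the paper's write-up where surjectivity onto the $\sf{a_{i,1}^{\lambda}}$-components is never explicitly established. The reduction mod $\mathrm{rad}V$ is valid because, as you observe from the action formulas in \ref{notation2}, $\mathrm{rad}V$ is exactly $\sf{Span}_{\kappa}\{\overline{\sf{a_{i,2}^{\lambda}}},\overline{\sf{a_{i,4}^{\lambda}}}\}$, and your identification of the $\sf{a_{l,3}^{\lambda}}$-coefficient of $\rho(\sf{a_{k,3}^{\lambda}})$ as $\sf{c_{l,1}^{(k)}}$ (with no $\sf{a_{l,1}^{\lambda}}$-component) checks out against the formula $\varepsilon\rho(\sf{a_{k,3}^{\lambda}})=Y\rho(\sf{a_{k,1}^{\lambda}})-\lambda\rho(\sf{a_{k,2}^{\lambda}})-\rho(\sf{a_{k-1,2}^{\lambda}})$.

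For part (2) you are in substance doing what the paper does: extract the congruences $\sf{c_{l+1,1}^{(k)}}\equiv\sf{c_{l,1}^{(k-1)}}$ (and $\sf{c_{l+1,1}^{(1)}}\in\varepsilon\mathcal{O}$, $\sf{c_{n,1}^{(k-1)}}\in\varepsilon\mathcal{O}$) from the integrality of $\rho(\sf{a_{k,3}^{\lambda}})$, conclude $\bar N$ is upper triangular with constant diagonal, combine with (1) to kill the diagonal, and read off that the bottom row vanishes mod $\varepsilon$. The paper phrases the last step as ``some $\sf{c_{k,1}^{(k)}}\equiv 0$, hence all are by the shift relation''; your $(\overline{\sf{c_{1,1}^{(1)}}})^n=0$ is the same observation stated more directly. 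Both are fine.
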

\begin{proof}
(1) 
{Suppose that an endomorphism $\rho\in\mathsf{rad}\mathsf{End}_A(Z_n^{\lambda})$ is defined by
\begin{equation}\label{eqq1} 
\rho(\sf{a}_{k,1}^{\lambda})= \sum_{l=1}^{n}c_{l,1}^{(k)}\sf{a}_{l,1}^{\lambda}+ A(k), 
\end{equation}
where $c_{l,1}^{(k)}\in \mathcal{O}$ and $A(k)\in \sf{Span}_\mathcal{O}\{\sf{a}_{i,j}^{\lambda}\mid j\neq 1\}$.}
We show that if the matrix $\sf{C}$ is invertible, then $\rho$ is surjective.
As $XY\sf{a}_{l,1}^{\lambda}=\varepsilon \sf{a}_{l,4}^{\lambda}$ holds for all $l=1,\ldots,n$, we have 
\[ ( \rho(\sf{a}_{1,4}^{\lambda}),\ldots, \rho(\sf{a}_{n,4}^{\lambda}))=(\sf{a}_{1,4}^{\lambda},\ldots ,\sf{a}_{n,4}^{\lambda})\sf{C}. \]
Thus, $\sf{a}_{1,4}^{\lambda},\ldots ,\sf{a}_{n,4}^{\lambda}$ are contained in the image of $\rho$. 
{By multiplying both sides of (\ref{eqq1}) by $X$}, we have 
\[ \rho(\sf{a}_{k,2}^{\lambda})=\sum_{l=1}^{n}c_{l,1}^{(k)}\sf{a}_{l,2}^{\lambda}+XA(k). \]
Since $XA(k)$ belongs to $\sf{Span}_\O\{\sf{a}_{l,4}^{\lambda}\mid l=1,\ldots,n\}$ for each {$k=1,2,\ldots,n$}, there exists $x(k)\in \sf{Span}_{\O}\{\sf{a}_{l,4}^{\lambda}\mid l=1,\ldots,n\}$ such that $\rho(x(k))=XA(k)$. 
Hence, we have
\[ ( \rho(\sf{a}_{1,2}^{\lambda}-x(1)),\ldots, \rho(\sf{a}_{n,2}^{\lambda}-x(n)))=(\sf{a}_{1,2}^{\lambda},\ldots ,\sf{a}_{n,2}^{\lambda})\sf{C}. \]
Therefore, $\sf{a}_{1,2}^{\lambda},\ldots ,\sf{a}_{n,2}^{\lambda}$ belong to the image of $\rho$. 
Finally, we show that $\sf{a}_{1,3}^{\lambda},\ldots ,\sf{a}_{n,3}^{\lambda}$ belong to the image of $\rho$. 
{By multiplying both sides of (\ref{eqq1}) by $Y$}, we have
{
\[ \varepsilon \rho(\sf{a}_{k,3}^{\lambda})=\varepsilon\sum_{l=1}^{n}{c_{l,1}^{(k)}}\sf{a}_{l,3}^{\lambda}+\sum_{l=1}^{n}{c^{(k)}_{l,1}}(\lambda\sf{a}_{l,2}^{\lambda}+\sf{a}_{l-1,2}^{\lambda})+YA(k)-\rho(\lambda\sf{a}_{k,2}^\lambda{+}\sf{a}_{k-1,2}^\lambda) \]
Since $YA(k)$ belongs to $\sf{Span}_\O\{\sf{a}_{l,4}^{\lambda}\mid l=1,\ldots,n\}$ for each {$k=1,2,\ldots,n$}, there exists $y(k)\in \sf{Span}_{\O}\{\sf{a}_{l,2}^{\lambda}, \sf{a}_{l,4}^{\lambda}\mid l=1,\ldots,n\}$ such that
\begin{equation}\label{eqq2}
 \varepsilon\rho(\sf{a}_{k,3}^\lambda)=\varepsilon\sum_{l=1}^nc_{l,1}^{(k)}\sf{a}_{l,3}^\lambda+\rho(y(k)).
 \end{equation}
}
Since the restriction of $\rho$ to $\sf{Span}_{\mathcal{O}}\{\sf{a}^{\lambda}_{l,2}, \sf{a}^{\lambda}_{l,4}\mid l=1,\ldots,n\}$ is a bijection from $\sf{Span}_{\mathcal{O}}\{\sf{a}^{\lambda}_{l,2}, \sf{a}^{\lambda}_{l,4}\mid l=1,\ldots,n\}$ to itself, the equation (\ref{eqq2}) implies that {there exists $y'(k)\in Z_n^{\lambda}$ such that $y(k)=\varepsilon y'(k)$. Then, we have
\[ ( \rho(\sf{a}_{1,3}^{\lambda}-y'(1)),\ldots, \rho(\sf{a}_{n,3}^{\lambda}-y'(n)))=(\sf{a}_{1,3}^{\lambda},\ldots ,\sf{a}_{n,3}^{\lambda})\sf{C}. \]
}
This completes the proof of the statement (1).

(2) The statement for $n=1$ is clear by (1). 
In order to prove this statement for $n>1$, we compute ${\rho}(Y\sf{a}_{k,1}^{\lambda}-\lambda X\sf{a}_{k,1}^{\lambda}-X\sf{a}_{k-1,1}^{\lambda})$ in two ways. 
Since $Y\sf{a}_{k,1}^{\lambda}=\varepsilon\sf{a}_{k,3}^{\lambda}+\lambda\sf{a}_{k,2}+\sf{a}_{k-1,2}^{\lambda}$ and $\sf{a}_{k,2}^{\lambda}=X\sf{a}_{k,1}^{\lambda}$, we have
\begin{equation}\label{eqq3}
\rho(Y\sf{a}_{k,1}^{\lambda}-\lambda X\sf{a}_{k,1}^{\lambda}-X\sf{a}_{k-1,1}^{\lambda})=\varepsilon {\rho}(\sf{a}_{k,3}^{\lambda}). 
\end{equation}

{For each $k=1,2,\ldots, n$, let $c^{(k)}_{l,2}$, $c^{(k)}_{l,3}$, $c^{(k)}_{l,4}$ ($l=1,2,\ldots,n$) be elements of $\mathcal{O}$ such that}
\[
 A(k)=\sum_{l=1}^{n}(c_{l,2}^{(k)}\sf{a}^{\lambda}_{l,2}+c_{l,3}^{(k)}\sf{a}^{\lambda}_{l,3}+c_{l,4}^{(k)}\sf{a}^{\lambda}_{l,4}).
 \]
For $k>1$, the left-hand side of (\ref{eqq3}) is
{ 
\begin{align*} 
&\sum_{l=1}^{n-1}(c_{l+1,1}^{(k)}-c_{l,1}^{(k-1)})\sf{a}_{l,2}^{\lambda}-c_{n,1}^{(k-1)}\sf{a}_{n,2}^{\lambda} \\
&+\varepsilon \sum_{l=1}^{n}c_{l,1}^{(k)}\sf{a}_{l,3}^{\lambda}+\sum_{l=1}^{n-1}(\varepsilon c_{l,2}^{(k)}-2\lambda c_{l,3}^{(k)}-c_{l,3}^{(k-1)}-c_{l+1,3}^{(k)})\sf{a}_{l,4}^{\lambda}+(\varepsilon\sf{c_{n,2}^{(k)}}-2\lambda c_{n,3}^{(k)}-c_{n,3}^{(k-1)})\sf{a}_{n,4}^{\lambda}. 
\end{align*}
}
Thus, the coefficients 
{
\begin{equation}\label{Eq}
c_{l+1,1}^{(k)}-c_{l,1}^{(k-1)}\quad (l=1,\ldots,n-1,\ k=1,2,\ldots,n)
\end{equation}
and $c_{n,1}^{(k-1)}$ belong to $\varepsilon\mathcal{O}$. }
It implies that strictly {upper} triangular entries of the matrix $\sf{C}$ belong to $\varepsilon\O$.
{In particular, $c_{k,1}^{(k)}-c_{k-1,1}^{(k-1)}\in\varepsilon\mathcal{O}$ $(k=2,\ldots,n)$.}

On the other hand, by the statement (1), we have
\[ \mathsf{det}(\sf{C})\equiv c_{1,1}^{(1)}\cdots c_{n,1}^{(n)}+\sum_{e\neq \sigma\in S_n}c_{1,1}^{((\sigma(1))}\cdots c_{n,1}^{(\sigma(n))} \equiv 0\quad \mathrm{mod}\ \varepsilon \mathcal{O}, \]
where $S_n$ is the symmetric group of degree $n$ and $e$ is its identity element.
Hence, $c_{k,1}^{(k)}\equiv 0$ modulo $\varepsilon\O$ for some $k$. 
{Therefore, the assertion follows from (\ref{Eq})}.
\end{proof}

For each $n>1$, we define an endomorphism $\Phi_n^{\lambda}:Z_n^{\lambda}\to Z_{n}^{\lambda}$ by
\[ \sf{a}_{k,1}^{\lambda} \longmapsto \left\{\begin{array}{ll}
\sf{a}_{n,4}^{\lambda} &\text{if $k=n$}, \\
0 &\text{otherwise.}\end{array}\right.\]
Recall that the projective cover of $Z_n^{\lambda}$ is given by 
\[ \begin{array}{ccccl}
\pi _{n,\lambda} & : & A^{2n} & \longrightarrow & \quad Z_n^{\lambda} \\
 & & e_i & \longmapsto & \left\{\begin{array}{lll}
                                                 \sf{a}_{k,1}^{\lambda} & \text{if $i=2k-1$}, & k=1,2,\ldots ,n, \\
                                                 \sf{a}_{k,3}^{\lambda} & \text{if $i=2k$}, &  k=1,2,\ldots ,n.\\
                                              \end{array}\right. \end{array} \]
                                           
\begin{lemma}\label{lambda}
Let  $\Phi_n^{\lambda}$ be the endomorphism of $Z_n^{\lambda}$ as above. Then, the following statements hold.
\begin{enumerate}[(1)]
\item $\Phi_n^{\lambda}$ does not factor through $\pi_{n,\lambda}$.
\item For any $\rho\in\mathsf{radEnd}_A(Z_n^{\lambda})$, {$\Phi_n^{\lambda}\circ\rho$} factors through $\pi_{n,\lambda}$.
\end{enumerate}
\end{lemma}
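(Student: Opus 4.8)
The plan is to read Lemma~\ref{lambda} as the verification, for $\varphi=\Phi_n^{\lambda}$, of conditions (i) and (iii) of Proposition~\ref{AKM}(2): since $A$ is symmetric the Nakayama functor is the identity, so $\nu(\pi_{n,\lambda})=\pi_{n,\lambda}$, and together with the indecomposability of $\Ker(\pi_{n,\lambda})=\tau Z_n^{\lambda}\simeq Z_n^{-\lambda}$ (Propositions~\ref{notuse}(1) and~\ref{indec of Heller}(1)) this lemma will produce the almost split sequence ending at $Z_n^{\lambda}$. Two elementary facts about $\Phi_n^{\lambda}$ will be used repeatedly: $\Phi_n^{\lambda}(\sf{a_{n,1}^{\lambda}})=\sf{a_{n,4}^{\lambda}}$ while $\Phi_n^{\lambda}$ annihilates every other basis vector of $Z_n^{\lambda}$ (this follows from the defining formula of $\Phi_n^{\lambda}$ and the description of $\End_A(Z_n^{\lambda})$ in terms of the values on the $\sf{a_{k,1}^{\lambda}}$), and $X$ and $Y$ kill $\Im(\Phi_n^{\lambda})$.

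For (1), I would pass to $\overline{A}$. If $\Phi_n^{\lambda}=\pi_{n,\lambda}\circ g$ with $g\in\Hom_A(Z_n^{\lambda},A^{2n})$, then $-\otimes\kappa$ makes $\overline{\Phi_n^{\lambda}}$ factor through the projective $\overline{A}$-module $\overline{A}^{2n}$, hence $\overline{\Phi_n^{\lambda}}$ vanishes in $\underline{\sf{mod}}$-$\overline{A}$. Under the isomorphism $Z_n^{\lambda}\otimes\kappa\simeq M(\lambda)_n\oplus M(-\lambda)_n$ of Proposition~\ref{indec of Heller}(3), via the explicit $f_1^{\lambda,n},f_2^{\lambda,n}$, the reduction $\overline{\Phi_n^{\lambda}}$ becomes, up to a sign, the $\overline{A}$-map $M(\lambda)_n\to M(-\lambda)_n$ with $u_n\mapsto v_n$ and all other basis vectors mapped to $0$. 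To see that this map is not stably trivial, I would try to lift it through the projective cover $\pi_n^{-\lambda}\colon\overline{A}^n\to M(-\lambda)_n$ of Lemma~\ref{proj1}: a lift is an element $w\in\overline{A}^n$ with $\pi_n^{-\lambda}(w)=v_n$ and $(Y-\lambda X)w=0$, and writing $w=Xe_n+w_0$ with $w_0\in\Ker(\pi_n^{-\lambda})$ this reduces to solving $(Y-\lambda X)w_0=-XYe_n$; but from the $\kappa$-basis of $\Ker(\pi_n^{-\lambda})$ written down in the proof of Lemma~\ref{proj1} one computes that $(Y-\lambda X)$ maps $\Ker(\pi_n^{-\lambda})$ onto $\sf{Span}_{\kappa}\{XYe_1,\dots,XYe_{n-1}\}$, which excludes $XYe_n$. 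Hence no such $w_0$ exists, a contradiction. (Already the case $n=1$ shows one cannot dispense with tracking precisely which socle generator of $M(-\lambda)_n$ is hit.)

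For (2), the point is Lemma~\ref{keylemma0}(2). Writing $\rho(\sf{a_{k,1}^{\lambda}})=\sum_{l}\sf{c_{l,1}^{(k)}}\sf{a_{l,1}^{\lambda}}+A(\sf{k})$ with $A(\sf{k})$ in the span of the $\sf{a_{i,j}^{\lambda}}$ having $\sf{j}\neq\sf{1}$, the facts above give $\Phi_n^{\lambda}\rho(\sf{a_{k,1}^{\lambda}})=\sf{c_{n,1}^{(k)}}\sf{a_{n,4}^{\lambda}}$, and $\sf{c_{n,1}^{(k)}}\in\varepsilon\mathcal{O}$ by Lemma~\ref{keylemma0}(2). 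Since $\pi_{n,\lambda}(XYe_{2n-1})=XY\sf{a_{n,1}^{\lambda}}=\varepsilon\sf{a_{n,4}^{\lambda}}$, I would introduce the $A$-map $\tilde h\colon A^{2n}\to A^{2n}$ defined on the standard basis by $e_{2k-1}\mapsto(\varepsilon^{-1}\sf{c_{n,1}^{(k)}})\,XYe_{2n-1}$ and $e_{2k}\mapsto 0$. Because $X$ and $Y$ annihilate $XYe_{2n-1}$, the map $\tilde h$ kills every $Xe_i,Ye_i,XYe_i$ and every $e_{2k}$; since each vector of the $\mathcal{O}$-basis of $\Ker(\pi_{n,\lambda})=\tau Z_n^{\lambda}$ displayed in the proof of Proposition~\ref{notuse}(1) is an $\mathcal{O}$-combination of these, $\tilde h$ kills $\Ker(\pi_{n,\lambda})$ and hence descends to $h\colon Z_n^{\lambda}\to A^{2n}$. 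By construction $\pi_{n,\lambda}h$ and $\Phi_n^{\lambda}\rho$ agree on each $\sf{a_{k,1}^{\lambda}}$, and since $\varepsilon\sf{a_{k,3}^{\lambda}}$ lies in the $A$-submodule generated by the $\sf{a_{l,1}^{\lambda}}$ while $Z_n^{\lambda}$ is $\varepsilon$-torsion free, the two maps then agree on the remaining generators $\sf{a_{k,3}^{\lambda}}$, hence everywhere, so $\Phi_n^{\lambda}\rho=\pi_{n,\lambda}h$ factors through $\pi_{n,\lambda}$. The only subtle point in (2) is the well-definedness of $h$, which is why one appeals to the explicit basis of $\Ker(\pi_{n,\lambda})$ and not merely to $\Ker(\pi_{n,\lambda})\subseteq\mathrm{rad}(A^{2n})$; the genuinely hard step is the non-factorization in (1), whose core is the syzygy computation $(Y-\lambda X)\Ker(\pi_n^{-\lambda})=\sf{Span}_{\kappa}\{XYe_1,\dots,XYe_{n-1}\}\not\ni XYe_n$.
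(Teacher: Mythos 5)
Your treatment of part (2) is essentially the paper's argument in a different packaging: the paper defines $\psi\colon Z_n^{\lambda}\to A^{2n}$ directly on the generators $\sf{a_{k,1}^{\lambda}}$ and asserts well-definedness; you instead define $\tilde h$ on the free module $A^{2n}$ (where there is nothing to check) and verify that $\tilde h$ annihilates $\Ker(\pi_{n,\lambda})$ so that it descends. Both produce the same map, and your final observation — that agreement on $\sf{a_{k,1}^{\lambda}}$ plus $\varepsilon$-torsion-freeness of $Z_n^{\lambda}$ forces agreement on $\sf{a_{k,3}^{\lambda}}$ — is a clean way to finish. This part is fine.

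Part (1) is a genuinely different route from the paper's. The paper works integrally with the hypothetical factorization $\Phi_n^{\lambda}=\pi_{n,\lambda}\psi$, writes out the coefficients of $\psi_k(\sf{a_{i,1}^{\lambda}})$ in $\mathcal{O}$, and derives a chain of congruences that terminates in a contradiction. You instead reduce mod $\varepsilon$: if $\Phi_n^{\lambda}$ factored through $\pi_{n,\lambda}$ then $\overline{\Phi_n^{\lambda}}$ would factor through the projective $\overline{A}^{2n}$, and under the decomposition $Z_n^{\lambda}\otimes\kappa\simeq M(\lambda)_n\oplus M(-\lambda)_n$ you correctly identify $\overline{\Phi_n^{\lambda}}$ (up to sign) with the $\overline{A}$-map $\phi\colon M(\lambda)_n\to M(-\lambda)_n$, $u_n\mapsto v_n$, all other basis vectors $\mapsto 0$, so it suffices to show $\phi$ is not stably trivial, i.e.\ does not lift through $\pi_n^{-\lambda}\colon\overline{A}^n\to M(-\lambda)_n$. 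This reformulation is correct and in spirit nicer than a coefficient chase.

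However, your description of what a lift of $\phi$ through $\pi_n^{-\lambda}$ is has a gap. You write that a lift amounts to a single element $w\in\overline{A}^n$ with $\pi_n^{-\lambda}(w)=v_n$ and $(Y-\lambda X)w=0$. That is the $n=1$ situation only. For general $n$ a lift is an $\overline{A}$-homomorphism $\tilde\phi\colon M(\lambda)_n\to\overline{A}^n$, determined by $w_i:=\tilde\phi(u_i)$, and the relations in $M(\lambda)_n$ translate into the recursive system
\[
(Y-\lambda X)w_1=0,\qquad (Y-\lambda X)w_i=Xw_{i-1}\ \ (i\ge 2),\qquad XYw_i=0,
\]
together with $w_i\in\Ker(\pi_n^{-\lambda})$ for $i<n$ and $\pi_n^{-\lambda}(w_n)=v_n$. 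The constraint on $w_n$ is $(Y-\lambda X)w_n=Xw_{n-1}$, not $(Y-\lambda X)w_n=0$, and $Xw_{n-1}$ may \emph{a priori} have a nonzero $XYe_n$-component, so the computation $(Y-\lambda X)\Ker(\pi_n^{-\lambda})=\sf{Span}_{\kappa}\{XYe_1,\ldots,XYe_{n-1}\}$ alone does not give the contradiction. Your key computation is correct and is indeed the crucial ingredient, but one still has to propagate the constraint $(Y-\lambda X)w_1=0$ forward through the system: writing $w_k=\sum_i\alpha_{k,i}\bigl((Y+\lambda X)e_i-Xe_{i-1}\bigr)+\sum_i\beta_{k,i}XYe_i$, one has $(Y-\lambda X)w_k=-\sum_{i\ge 2}\alpha_{k,i}XYe_{i-1}$ and $Xw_k=\sum_i\alpha_{k,i}XYe_i$, so $(Y-\lambda X)w_1=0$ forces $\alpha_{1,i}=0$ for $i\ge 2$, then inductively $\alpha_{k,i}=0$ for $i>k$, hence $\alpha_{n-1,n}=0$, and only then does comparing $XYe_n$-coefficients in $(Y-\lambda X)w_n=Xw_{n-1}$ give $1=0$. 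With this bookkeeping added, your approach to (1) does go through and is a legitimate alternative to the paper's integral argument; as written, though, it only proves the $n=1$ case.
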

\begin{proof} 
(1) 
Suppose that $\Phi_n^{\lambda}$ factors through the map $\pi_{n,\lambda}$. 
Let ${\psi}:Z_n^{\lambda}\to A^{2n}$ such that $\Phi_n^{\lambda}={\pi_{n,\lambda}\circ\psi}$. 
Put 
\[ {\psi(\sf{a}_{i,1}^{\lambda})=\sum_{k=1}^{2n}\left\{a^{({i})}_{k,1}e_k+a^{({i})}_{k,2}Xe_k+a^{({i})}_{k,3}Ye_k+a^{({i})}_{k,4}XYe_k\right\}.} \]
By comparing coefficients {of $\sf{a}_{s,4}^\lambda$ in $\pi_{n,\lambda}\circ\psi(\sf{a}_{k,1}^{\lambda})$} with those in $\Phi_n^{\lambda}(\sf{a}_{k,1}^{\lambda})$, we have the following equations:
\begin{equation}\label{eq1}
\begin{array}{ll}
\varepsilon a^{{(i)}}_{2s-1,4}+a_{2s,2}^{{(i)}}-\lambda a_{2s,3}^{(i)}-a_{2s+2,3}^{{(i)}}=0 & \text{{$(s=1,2,\ldots,n-1)$}},
\end{array}
\end{equation}
\begin{equation}\label{eq2}
\varepsilon a^{{(i)}}_{2n-1,4}+a_{2n,2}^{{(i)}}-\lambda a_{2n,3}^{(i)}=\left\{\begin{array}{ll}
1 & \text{if ${i=n}$}, \\
0 & \text{otherwise}.
\end{array} \right.
\end{equation}
Since {$X\sf{a}_{i,1}^\lambda=\sf{a}_{i,2}^\lambda$ and} $Y\sf{a}_{i,1}^\lambda=\varepsilon \sf{a}_{i,3}^\lambda+\lambda\sf{a}_{i,2}^\lambda+\sf{a}_{i-1,2}^\lambda$ ($i=1,\ldots, n$), we have
\begin{equation}\label{eq3}
{\varepsilon \psi(\sf{a}_{i,3}^{\lambda})=\sum_{k=1}^{2n}\left\{-(\lambda a_{k,1}^{(i)}{+}a_{k,1}^{(i-1)})Xe_k+a_{k,1}^{(i)}Ye_k+(a_{k,2}^{(i)}-\lambda a_{k,3}^{{(i)}}-a_{k,3}^{{(i-1)}})XYe_k\right\}, }
\end{equation}
where $a_{k,3}^{\sf{(0)}}=0$, and $1\leq i\leq n$. 

In order to obtain a contradiction, we show that $a_{2n,2}^{(n)}-\lambda a_{2n,3}^{(n)}\in\varepsilon\mathcal{O}$. 
{By (\ref{eq3}), we have $(a_{2n,2}^{(i)}-\lambda a_{2n,3}^{{(i)}}-a_{2n,3}^{{(i-1)}}) \in\varepsilon\mathcal{O}$. 
Thus, it is enough to show that $a_{2n,3}^{{(n-1)}}$ is in $\varepsilon\mathcal{O}$.}
The equation (\ref{eq1}) implies that $a_{2n,3}^{{(n-1)}}\in\varepsilon\mathcal{O}$ if and only if $a_{2n-2,2}^{{(n-1)}}-\lambda a_{2n-2,3}^{(n-1)}\in\varepsilon\mathcal{O}$. 
By repeating this procedure, we deduce that the claim is equivalent to $a_{2,2}^{\sf{(1)}}-\lambda a_{2,3}^\sf{(1)}\in\varepsilon\mathcal{O}$.
However, $a_{2,2}^{{(1)}}-\lambda a_{2,3}^{(1)}\in\varepsilon\mathcal{O}$ follows from the equation (\ref{eq3}). 
Now, we obtain
\[ 1=\varepsilon a^{{(n)}}_{2n-1,4}+a_{2n,2}^{{(n)}}-\lambda a_{2n,3}^{(n)}\in\varepsilon\mathcal{O}, \]
a contradiction.

(2) Let $\rho\in\mathsf{radEnd}_A(Z_n^{\lambda})$. 
We put 
\[ \rho(\sf{a}_{k,1}^{\lambda})=\sum_{{i=1}}^{n}({c_{i,1}^{(k)}}\sf{a}_{i,1}^{\lambda}+{c_{i,2}^{(k)}}\sf{a}_{i,2}^{\lambda}+{c_{i,3}^{(k)}}\sf{a}_{i,3}^{\lambda}+{c_{i,4}^{(k)}}\sf{a}_{i,4}^{\lambda}). \]
Lemma \ref{keylemma0} yields that there exists ${d_{n,1}^{(k)}}\in\mathcal{O}$ such that ${\varepsilon d_{n,1}^{(k)}}={c_{n,1}^{(k)}}$ for each ${k}$.
We define an $A$-module homomorphism $\psi:Z_n^{\lambda}\to A^{2n}$ by {$\psi(\sf{a}_{k,1}^{\lambda})= d_{n,1}^{(k)}XYe_{2n-1}$}.
Then, it is easy to check that $\psi$ is well-defined and {$\Phi_n^{\lambda}\circ \rho(\sf{a}_{k,1}^{\lambda})=d_{n,1}^{(k)}\sf{a}_{n,4}^{\lambda}=\pi_{n,\lambda}\circ\psi(\sf{a}_{k,1}^{\lambda})$}.
\end{proof}

Summing up, we have obtained the following proposition.

\begin{proposition}\label{Middle E2}
Consider the following pull-back diagram:
$$\begin{xy}
(0,15)*[o]+{0}="01",(20,15)*[o]+{Z_n^{-\lambda}}="L",(40,15)*[o]+{E_n^{\lambda}}="E", (60,15)*[o]+{Z_n^{\lambda}}="M",(80,15)*[o]+{0}="02",
(0,0)*[o]+{0}="03",(20,0)*[o]+{Z_n^{-\lambda}}="L2",(40,0)*[o]+{A^{2n}}="nP", (60,0)*[o]+{Z_n^{\lambda}}="nM",(80,0)*[o]+{0}="04",
\ar "01";"L"
\ar "L";"E"
\ar "E";"M"
\ar "M";"02"
\ar "03";"L2"
\ar "L2";"nP"
\ar "nP";"nM"_{\pi_{n,\lambda}}
\ar "nM";"04"
\ar @{-}@<0.5mm>"L";"L2"
\ar @{-}@<-0.5mm>"L";"L2"
\ar "E";"nP"
\ar "M";"nM"^{\Phi_n^{\lambda}}
\end{xy}$$
Then, the upper exact sequence is the almost split sequence ending at {$Z_n^{\lambda}$}.
\end{proposition}
\begin{proof}
The statement follows from Proposition \ref{AKM} and Lemma \ref{lambda}.
\end{proof}

\subsection{The middle term of the almost split sequence ending at \mbox{\boldmath $Z_n^{\lambda}$}}

We denote by $E_n^{\lambda}$ the middle term of the almost split sequence ending at $Z_n^{\lambda}$. By Proposition \ref{Middle E2}, the $A$-lattice $E_n^{\lambda}$ is of the form
\[ E_n^{\lambda}=\{ {x+y}\in A^{2n}\oplus Z_n^{\lambda}\mid {x\in A^{2n}, y\in Z_n^\lambda},\ \pi_{n,\lambda}(x)=\Phi_n^{\lambda}(y) \}. \]
Then, an $\mathcal{O}$-basis of the $A$-lattice $E_n^{\lambda}$ is given as follows:
{If $n=1$, then}
{
\begin{align*}
E_1^\lambda = & \mathcal{O}(\varepsilon e_2+\lambda Xe_1-Ye_1)\oplus \mathcal{O}(\varepsilon Xe_2{+\lambda X e_2})\oplus \mathcal{O}(Ye_2-XYe_1)\oplus \mathcal{O}XYe_2 \\
  & \oplus (\sf{a}_{1,1}^\lambda+Xe_2)\oplus \mathcal{O}\sf{a}_{1,2}^\lambda\oplus \sf{a}_{1,3}^\lambda\oplus \sf{a}_{1,4}^\lambda.
\end{align*}
If $n>1$}
{, then
\begin{align*}
E_n^{\lambda} = & \bigoplus _{k=1}^{n}  \bigg(\mathcal{O}(\varepsilon e_{2k}+\lambda Xe_{2k-1}-Ye_{2k-1}+Xe_{2k-3}) \oplus\mathcal{O}(\varepsilon Xe_{2k}-XYe_{2k-1}) \\
&\quad\quad\quad   \oplus\mathcal{O}(Ye_{2k}+\lambda Xe_{2k}+Xe_{2k-2})\oplus\mathcal{O}(XYe_{2k})\bigg) \\
& \bigoplus_{k=1}^{{n-1}}\bigg(\mathcal{O}\sf{a}_{k,1}^{\lambda} \oplus \mathcal{O}\sf{a}_{k,2}^{\lambda} \oplus \mathcal{O}\sf{a}_{k,3}^{\lambda}\oplus \mathcal{O}\sf{a}_{k,4}^{\lambda}\bigg)\\
& \oplus \mathcal{O}(\sf{a}_{n,1}^\sf{\lambda}+Xe_{2n}) \oplus\mathcal{O}\sf{a}_{n,2}^{\lambda}\oplus \mathcal{O}\sf{a}_{n,3}^{\lambda}\oplus\mathcal{O}\sf{a}_{n,4}^{\lambda}, 
\end{align*}
}
{where we understand that $e_{0}=e_{-1}=0$.}
\begin{lemma}\label{E lambda}
The following statements hold.
\begin{enumerate}[(1)]
\item There is an isomorphism $E_n^{\lambda}\otimes\kappa\simeq M(\lambda)_{n-1}\oplus M(\lambda)_{n+1}\oplus M(-\lambda)_{n}^{\oplus 2}$
{, where $M(\lambda)_0=0$.}
\item We have an isomorphism $(\tau E_n^{\lambda})\otimes\kappa\simeq M(-\lambda)_{{n-1}}\oplus M(-\lambda)_{n+1}\oplus M(\lambda)_{n}^{\oplus 2}$
{, where $M(-\lambda)_0=0$.}
\item {For any $n\geq 1$,} $E_n^{\lambda}$ is a non-projective indecomposable $A$-lattice.
\end{enumerate}
\end{lemma}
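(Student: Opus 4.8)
The plan is to prove (1) by an explicit computation with the $\mathcal{O}$-basis of $E_n^{\lambda}$ displayed above, to deduce (2) from (1) together with Lemmas \ref{proj cover} and \ref{proj1}, and to prove the indecomposability in (3) by analysing $\mathrm{End}_A(E_n^{\lambda})$ in the spirit of the proofs of Proposition \ref{indec of Heller}(1)--(2). For (1): since $E_n^{\lambda}$ is free over $\mathcal{O}$, reducing the displayed basis modulo $\varepsilon$ gives a $\kappa$-basis of $E_n^{\lambda}\otimes\kappa$, on which the actions of $X$ and $Y$ are read off from those on $A^{2n}$ and on $Z_n^{\lambda}$, every term carrying a factor $\varepsilon$ dying. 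The first $4n$ basis vectors are a basis of the copy $\tau Z_n^{\lambda}=Z_n^{-\lambda}=\ker\pi_{n,\lambda}\hookrightarrow A^{2n}$ of Proposition \ref{notuse}(1), which by Proposition \ref{indec of Heller}(3) reduces to $M(-\lambda)_n\oplus M(\lambda)_n$ inside $E_n^{\lambda}\otimes\kappa$, with quotient $Z_n^{\lambda}\otimes\kappa\simeq M(\lambda)_n\oplus M(-\lambda)_n$. I would then exhibit explicit $\overline{A}$-submodules: two copies of $M(-\lambda)_n$ (spanned by the $\sf{a_{k,3}^{\lambda}},\sf{a_{k,4}^{\lambda}}$-type vectors of the sub and of the quotient, which glue only trivially), a copy of $M(\lambda)_{n-1}$, and a copy of $M(\lambda)_{n+1}$ whose ``top'' is the extra generator $\sf{a_{n,1}^{\lambda}}-Xe_{2n}$ created by the pullback; matching the $X$- and $Y$-actions against the pictures in \ref{notation} identifies each summand, and the count $2(n-1)+2(n+1)+2\cdot 2n=8n$ shows the sum is direct and fills $E_n^{\lambda}\otimes\kappa$. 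For $n=1$ one reads $M(\lambda)_0=0$ and the decomposition degenerates to $M(\lambda)_2\oplus M(-\lambda)_1^{\oplus 2}$.

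Part (2) follows quickly. By (1) the $\overline{A}$-module $E_n^{\lambda}\otimes\kappa$ has no projective summand, so $E_n^{\lambda}$ is not projective; hence Lemma \ref{proj cover} gives $(\tau E_n^{\lambda})\otimes\kappa\simeq\widetilde{\Omega}(E_n^{\lambda}\otimes\kappa)$. Since $\widetilde{\Omega}$ commutes with direct sums and, by Lemma \ref{proj1}, sends $M(\mu)_m$ to $M(-\mu)_m$ for $\mu\in\kappa$ (with $M(\lambda)_0=0$ when $n=1$), applying it to the decomposition of (1) gives (2).

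The main obstacle is the indecomposability in (3); I would prove it exactly as Proposition \ref{indec of Heller}(1)--(2) were proved. First compute the representing matrices of $X$ and $Y$ on $E_n^{\lambda}$ in the displayed $\mathcal{O}$-basis: they are block-triangular of size $8n$ with the same Jordan-type pattern as for $Z_n^{\lambda}$. Solving $MX=XM$ and $MY=YM$ determines the shape of a general $M=(m_{i,j})\in\mathrm{End}_A(E_n^{\lambda})$, and then one imposes $M=M^{2}$ on an idempotent $M$. As in the Heller-lattice case, the relevant ``diagonal'' entry is congruent to $0$ or to $1$ modulo $\varepsilon$; in the first case a descending induction forces every entry of $M$ to lie in $\varepsilon^{t}\mathcal{O}$ for all $t>0$, so $M=\mathbf{0}_{8n}$, and the second case follows by applying this to $\mathbf{1}_{8n}-M$. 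With the non-projectivity already noted, this proves (3). The delicate point is purely bookkeeping: the displayed basis of $E_n^{\lambda}$ is less symmetric than that of $Z_n^{\lambda}$ and the matrices are $8n\times 8n$, so identifying the right entry to start the divisibility argument from — and the right chain of entries of $M$ and $M^{2}$ to compare — takes more care than in Proposition \ref{indec of Heller}.
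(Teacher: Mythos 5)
For parts (1) and (2) your method coincides with the paper's: you exhibit explicit $\overline{A}$-submodule summands of $E_n^{\lambda}\otimes\kappa$ by reducing the displayed $\mathcal{O}$-basis modulo $\varepsilon$, and then obtain (2) by applying $\widetilde{\Omega}$ via Lemmas~\ref{proj cover} and~\ref{proj1}. (Your phrasing of where the two $M(-\lambda)_n$ copies live is slightly imprecise — one is spanned by the reductions of the first two columns of the kernel basis, not by $\sf{a}$-type vectors — but the underlying decomposition is the paper's.)

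For part (3) you take a genuinely different route from the paper, and this is where the proposal has a real gap. The paper does not analyse $\End_A(E_n^{\lambda})$ at all. It argues by contradiction using the Auslander--Reiten machinery: if $E_n^{\lambda}=E_1\oplus E_2$ with $E_1\otimes\kappa\simeq M(-\lambda)_n^{\oplus 2}$ and $E_2\otimes\kappa\simeq M(\lambda)_{n-1}\oplus M(\lambda)_{n+1}$ (the only rank-compatible split of the reduction), one checks $E_2$ is indecomposable and not a Heller lattice, so by Proposition~\ref{split} the almost split sequence ending at $E_2$ reduces to a split sequence over $\overline{A}$; but the first term $Z_n^{-\lambda}$ then reduces to a direct sum that contradicts Proposition~\ref{indec of Heller}(3). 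This is short and re-uses the very lemma that later drives Theorems~\ref{main1} and~\ref{main2}.

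Your plan instead is to redo the idempotent calculation of Proposition~\ref{indec of Heller}(1) on $8n\times 8n$ matrices. The step that is not justified is the assertion that ``the relevant diagonal entry is congruent to $0$ or to $1$ modulo $\varepsilon$,'' which is what lets the descending-divisibility induction get started. In the Heller-lattice case this followed because the commutation relations $MX=XM$, $MY=YM$ force all diagonal blocks of $M$ to carry a single scalar $m_{1,1}$ modulo $\varepsilon$. For $E_n^{\lambda}$ the reduction $E_n^{\lambda}\otimes\kappa$ has four indecomposable summands $M(\lambda)_{n+1}$, $M(\lambda)_{n-1}$, $M(-\lambda)_n^{\oplus 2}$, which are generically pairwise non-isomorphic; an $\overline{A}$-endomorphism is then allowed to be a non-scalar block idempotent, and nothing in your outline shows that the $A$-linear commutation relations on $E_n^{\lambda}$ rule this out. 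Establishing that the image of $\End_A(E_n^{\lambda})$ in $\End_{\overline{A}}(E_n^{\lambda}\otimes\kappa)$ contains no nontrivial idempotent is essentially what indecomposability asserts, so without an explicit chain of entry comparisons — which you flag as ``bookkeeping'' but do not produce — the argument has not been reduced to anything already known. You should either carry out that computation in full, or, more efficiently, switch to the paper's argument via Proposition~\ref{split}.
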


\begin{proof}
(1) We define $\overline{A}$-submodules of $E_n^{\lambda}\otimes\kappa$ as follows.
\begin{align*} 
&E(\lambda,n)_1:=\sf{Span}_{\kappa}\left\{\left.
\begin{array}{l}
(\varepsilon e_{2k}+\lambda Xe_{2k-1}-Ye_{2k-1}+Xe_{2k-3}), \\
(\varepsilon Xe_{2k}-XYe_{2k-1})
\end{array}
\right|\ \begin{array}{l}
{k=1,\ldots, n}
\end{array}\right\} \\
&E(\lambda,n)_2:=\sf{Span}_{\kappa}\left\{
\begin{array}{l}
\sf{a}_{k,3}^{\lambda},\ \sf{a}_{k,4}^{\lambda}
\end{array}
\left|\ \begin{array}{l}
k=1,\ldots, n\end{array}\right\}\right. \\
&E(\lambda,n)_3:=\sf{Span}_{\kappa}\left\{\left.
\begin{array}{l}
(Ye_2+\lambda Xe_2),\ (XYe_2),\\
{(Ye_{2k}+\lambda Xe_{2k}+Xe_{2k-2}+\sf{a}_{k-1,1}^{\lambda})},\\
{(XYe_{2k}+\sf{a}_{k-1,2}^{\lambda})},\\
{(\sf{a}_{n,1}^\sf{\lambda}+Xe_{2n}),\ \sf{a}_{n,2}^{\lambda}} \\
\end{array} \right|\ \begin{array}{l}
{k=2,\ldots, n}
\end{array}\right\}
\\
&E(\lambda,n)_4:=\sf{Span}_{\kappa}\left\{
\begin{array}{l}
\sf{a}_{k,1}^{\lambda},\ \sf{a}_{k,2}^{\lambda}
\end{array}
\left|\ \begin{array}{l}
k=1,\ldots, n-1\end{array}\right\}\right. 
\end{align*}
{Here, if $n=1$, then we understand that $E(\lambda,1)_4=0$.}
Then, {we observe that} $E_n^{{\lambda}}\otimes\kappa=E(\lambda,n)_1\oplus E(\lambda,n)_2\oplus E(\lambda,n)_3\oplus E(\lambda,n)_4$ and
\[ E(\lambda,n)_1\simeq  E(\lambda,n)_2\simeq M(-\lambda)_n,\quad E(\lambda,n)_3\simeq M(\lambda)_{n+1},\quad E(\lambda,n)_4\simeq M(\lambda)_{n-1}. \]

(2) This follows from Lemmas \ref{proj cover}, \ref{proj1} and the statement (1).

(3) Suppose that $E_n^{\lambda}$ is decomposable. We write $E_n^{\lambda}=E_1\oplus E_2$ with $E_1\neq 0 \neq E_2$ as $A$-lattices. 
Then, the ranks of the $A$-lattices $E_1$ and $E_2$ are divisible by four. The statement (1) implies that $E_1\otimes\kappa\simeq M(-\lambda)_n^{\oplus 2}$ and $E_2\otimes\kappa\simeq M(\lambda)_{n+1}\oplus M(\lambda)_{n-1}$.

{First, we show that $E_2$ is indecomposable. 
If $n=1$, then it is obvious. Assume that $n>1$ and $E_2=E_{2,1}\oplus E_{2,2}$ with $E_{2,1}\neq 0\neq E_{2,2}$ as $A$-lattices.
We may assume that $E_{2,1}\otimes \kappa\simeq M(\lambda)_{n+1}$ and $E_{2,2}\otimes\kappa\simeq M(\lambda)_{n-1}$.
Hence, $E_{2,1}$ and $E_{2,2}$ are indecomposable.
Let
\[ 0\longrightarrow \tau E_{2,2}\longrightarrow W \longrightarrow E_{2,2}\longrightarrow 0 \]
be the almost split sequence ending at $E_{2,2}$. 
Since $E_{2,2}$ is a direct summand of $E_{n}^\lambda$, the middle term $W$ has the Heller lattice $Z_n^{-\lambda}$ as a direct summand.
This contradicts the fact that the rank of $W$ is $4n-4$.}

Let $0\to \tau E_2\to Z_n^{-\lambda}\oplus {W'}\to E_2\to 0$ be the almost split sequence ending at $E_2$. 
By {Lemmas \ref{proj cover} and} \ref{proj1}, we have {$\tau E_2\otimes\kappa \simeq M(-\lambda)_{n+1}\oplus M(-\lambda)_{n-1}$.}
On the other hand, the induced sequence
\[ 0\to (\tau E_2\otimes\kappa) \to (Z_1^{-\lambda}\otimes\kappa)\oplus ({W'}\otimes\kappa)\to (E_2\otimes\kappa)\to 0 \]
splits, which {contradicts Proposition \ref{indec of Heller} (3).}
\end{proof}

\begin{corollary}\label{cor3}
For any $n>0$ and $\lambda\in\kappa$, the Heller component $\mathcal{CH}(Z_n^{\lambda})$ has no loops.
\end{corollary}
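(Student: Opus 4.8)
The plan is to assume that $\mathcal{CH}(Z_n^{\lambda})$ contains a loop and derive a contradiction from the fact that the middle term $E_n^{\lambda}$ of the almost split sequence ending at $Z_n^{\lambda}$ is indecomposable (Lemma \ref{E lambda}(3)). First I would record that $\mathcal{CH}(Z_n^{\lambda})$ is periodic: by Proposition \ref{notuse}(1) one has $\tau Z_n^{\lambda}\simeq Z_n^{-\lambda}$ and $\tau Z_n^{-\lambda}\simeq Z_n^{\lambda}$, hence $\tau^{2}Z_n^{\lambda}\simeq Z_n^{\lambda}$, so $Z_n^{\lambda}$ is a $\tau$-periodic vertex and therefore, as recalled after Theorem \ref{Ried}, every vertex of $\mathcal{CH}(Z_n^{\lambda})$ is periodic. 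Since $\Gamma_s(A)$ has infinitely many vertices (it already contains the infinitely many Heller lattices $Z_m$, $m\in\mathbb{Z}$, of Theorem \ref{nonperiodic Heller}), Proposition \ref{periodic_case} is applicable to the periodic component $\mathcal{CH}(Z_n^{\lambda})$.

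Suppose then that $\mathcal{CH}(Z_n^{\lambda})$ has a loop. By Proposition \ref{periodic_case}(1), $\mathcal{CH}(Z_n^{\lambda})\setminus\{\text{loops}\}\simeq\mathbb{Z}A_{\infty}/\langle\tau\rangle$ and every loop occurs on the boundary, which is the single vertex $V$, the mouth of the tube, with $\tau V=V$. Label the vertices of $\mathbb{Z}A_{\infty}/\langle\tau\rangle$ as $V=[1],[2],[3],\dots$ from the mouth inwards, so that in $\mathbb{Z}A_{\infty}/\langle\tau\rangle$ the sources of the arrows into $[k]$ are $[k-1]$ and $[k+1]$ for $k\ge 2$, and just $[2]$ for $k=1$. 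I would then locate $Z_n^{\lambda}$ among the $[k]$. If $Z_n^{\lambda}=[k]$ for some $k\ge 2$, the middle term of the almost split sequence ending at $Z_n^{\lambda}$ has the two non-isomorphic summands $[k-1]$ and $[k+1]$, hence is decomposable; but this middle term equals $E_n^{\lambda}$, which is indecomposable, a contradiction. If $Z_n^{\lambda}=V$, the loop at $V$ is an irreducible endomorphism of $Z_n^{\lambda}$, so by the valuation convention of Definition \ref{ARquiver} the lattice $Z_n^{\lambda}$ is a direct summand of $E_n^{\lambda}$; since $E_n^{\lambda}$ is indecomposable this forces $E_n^{\lambda}\simeq Z_n^{\lambda}$, which is impossible because $\rank_{\mathcal{O}}E_n^{\lambda}=8n$ by Lemma \ref{E lambda}(1) while $\rank_{\mathcal{O}}Z_n^{\lambda}=4n$ by Proposition \ref{indec of Heller}(3) and $n>0$. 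In either case we reach a contradiction, so $\mathcal{CH}(Z_n^{\lambda})$ has no loops.

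The only point that requires care is the subcase $Z_n^{\lambda}=V$: one must be sure that a loop at a boundary vertex genuinely forces that vertex to appear as a direct summand of the central term of the almost split sequence ending at it, which is exactly the content of the definition of the valuation $(a,b)$ on arrows of $\Gamma_s(A)$. Once this is granted the contradiction is immediate from a rank count (equivalently, from comparing the explicit $\mathcal{O}$-bases $\mathbb{B}_n^{\lambda}$ of $Z_n^{\lambda}$ and of $E_n^{\lambda}$), with no computation. I would also remark that when $\lambda\neq-\lambda$ in $\kappa$, that is, $\lambda\neq 0$ and $\mathsf{Char}(\kappa)\neq 2$, there is a shorter route: a component isomorphic to $\mathbb{Z}A_{\infty}/\langle\tau\rangle$ has every vertex fixed by $\tau$, which already contradicts $\tau Z_n^{\lambda}\simeq Z_n^{-\lambda}\not\simeq Z_n^{\lambda}$; the argument above, however, is uniform in $\lambda$ and covers the case $\lambda=0$ as well.
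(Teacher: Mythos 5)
Your argument is correct and follows the paper's intended line of reasoning: the corollary is stated immediately after Lemma \ref{E lambda} precisely because indecomposability of $E_n^{\lambda}$, combined with Proposition \ref{periodic_case}(1), rules out loops. One small remark on the boundary subcase: once you place $Z_n^{\lambda}$ at the mouth $V$ of $\mathbb{Z}A_\infty/\langle\tau\rangle$ with a loop, the middle term of the almost split sequence ending at $V$ already contains both $V$ (from the loop) and its non-loop neighbour $[2]$ (from the tube structure) as non-isomorphic direct summands, so it is decomposable and the contradiction with Lemma \ref{E lambda}(3) is immediate; the rank comparison $8n\neq 4n$ is a valid but unnecessary detour.
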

\begin{proof}
{By Lemma \ref{E lambda} (3), Heller lattices appear on the boundary of $\mathcal{CH}(Z_n^{\lambda})$.}
\end{proof}

\subsection{The Heller component containing \mbox{\boldmath $Z_n^{\lambda}$}}

In this subsection, we determine the shape of $\mathcal{CH}(Z_n^{\lambda})$. 

\begin{lemma}\label{infinitely many}
Let $\mathcal{C}$ be a component of stable Auslander--Reiten quiver of $A$. Then, $\mathcal{C}$ has infinitely many vertices.
\end{lemma}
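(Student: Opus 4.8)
The plan is to show that every component $\mathcal{C}$ of $\Gamma_s(A)$ is infinite by ruling out the possibility that it is a finite stable translation quiver. Suppose for contradiction that $\mathcal{C}$ has only finitely many vertices. Then $\mathcal{C}$ is periodic: since $\mathcal{C}$ is a stable translation quiver, the injective map $\tau$ acts as a permutation on the finite vertex set $\mathcal{C}_0$, so every vertex is $\tau$-periodic. In particular, for every indecomposable Heller lattice $Z$ appearing in $\mathcal{C}$ there is some $k>0$ with $\tau^k Z \simeq Z$.

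The key step is to use the Heller lattices $Z_m^{\lambda}$ together with the $\tau$-orbit formula $\tau Z_n^{\lambda}\simeq Z_n^{-\lambda}$ and $\tau Z_n^{\infty}\simeq Z_n^{\infty}$ from Proposition \ref{notuse}, and then to derive a contradiction from the rank behaviour of $\tau = \Omega$. Concretely, I would argue as follows. Each component of $\Gamma_s(A)$ contains at least one vertex; since $A = \mathcal{O}[X,Y]/(X^2,Y^2)$ is symmetric and $A\otimes\mathcal{K}$ is self-injective, Theorem \ref{AR} gives almost split sequences in $\sf{latt}^{(\natural)}$-$A$, and by Corollary (the statement $\tau\simeq\Omega$) the translation is realised by the syzygy functor. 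If $\mathcal{C}$ were finite, then taking any vertex $M\in\mathcal{C}$ and iterating the almost split sequence $0\to \tau M\to E_M\to M\to 0$ would force, by the $\tau$-periodicity of $M$, the rank sequence $(\mathrm{rank}\,\tau^i M)_{i\ge 0}$ to be bounded, hence periodic; but then the subadditive function $\mathcal{R}$ of \eqref{(2)} is bounded and additive on the tree class $\overline{T}$. By Theorem \ref{tree class}(4) an unbounded subadditive function forces $\overline{T}=A_\infty$, which is already impossible for a finite quiver; and a bounded additive function forces $\overline{T}$ to be a Euclidean or infinite Dynkin diagram by Theorem \ref{tree class}(3), again impossible for finitely many vertices. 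Thus the only remaining case is that $\overline{T}$ is a finite Dynkin diagram. I would eliminate this last case by a direct computation showing that the minimal projective presentations of the lattices in $\sf{latt}^{(\natural)}$-$A$ force $\mathrm{rank}$ to strictly increase along some infinite $\tau^{-1}$-orbit: for instance, from the explicit almost split sequences (Theorems \ref{nonperiodic Heller}, \ref{Middle E2}, and the dual sequence starting at a given lattice obtained via Proposition \ref{split}), the rank of $\tau^{-1}M$ exceeds that of $M$ for suitable starting $M$, contradicting periodicity.

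Alternatively — and this is the cleaner route I would actually adopt — I would invoke the fact, already established in the excerpt, that each Heller lattice $Z_m^{\lambda}$ lies in some component $\mathcal{CH}(Z_m^{\lambda})$, compute $\mathcal{D}(Z_m^{\lambda})$ and $\mathcal{D}(E_m^{\lambda})$ using Proposition \ref{indec of Heller} and Lemma \ref{E lambda}, and observe that $\mathcal{D}$ is a non-constant $\tau$-invariant function taking the value $2$ on each $Z_m^{\lambda}$; combined with Lemma \ref{subadd D} and Theorem \ref{tree class}, a finite tree class cannot support such a function consistently with the valued structure. More directly: were $\mathcal{CH}(Z_m^{\lambda})$ finite it would be of the form $\mathbb{Z}T/G$ with $T$ a finite Dynkin diagram and $G$ generated by a power of $\tau$, so $\mathbb{Z}T$ itself would be $\tau$-periodic — but $\mathbb{Z}T$ is never $\tau$-periodic for $T\ne \emptyset$, a contradiction with Theorem \ref{Ried}. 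Hence no component can be finite, and since there are non-projective indecomposable $A$-lattices in $\sf{latt}^{(\natural)}$-$A$ (e.g.\ the Heller lattices themselves, which are non-projective by Proposition \ref{indec of Heller}), we conclude that every component, in particular $\mathcal{C}$, has infinitely many vertices.

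The main obstacle is the finite-Dynkin case: ruling out $\overline{T}$ a finite Dynkin diagram is exactly the statement that $\mathcal{C}$ is infinite, so one cannot simply cite Theorem \ref{tree class}. The honest content is therefore showing that $\mathbb{Z}T$ admits no admissible group making the quotient finite, equivalently that $\tau$ has infinite order on $\mathbb{Z}T$ for $T$ a finite Dynkin diagram; this is a standard but essential input from Riedtmann's theorem and the combinatorics of $\mathbb{Z}T$, and everything else in the proof is bookkeeping with the $\tau$-orbits of Heller lattices already computed in Section 2.
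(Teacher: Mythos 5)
There is a genuine gap, and the gap is a false claim at the heart of the "cleaner route" you propose. You assert that $\mathbb{Z}T$ admits no admissible group $G$ for which $\mathbb{Z}T/G$ is finite, "equivalently that $\tau$ has infinite order on $\mathbb{Z}T$ for $T$ a finite Dynkin diagram." The second statement is true, but the claimed equivalence is not: $\tau$ having infinite order on $\mathbb{Z}T$ is perfectly compatible with the quotient $\mathbb{Z}T/\langle\tau^k\rangle$ being finite, and indeed when $T$ is a finite Dynkin diagram, $\mathbb{Z}T/\langle\tau^k\rangle$ \emph{is} a finite stable translation quiver for every $k\ge 1$. Such quotients really do occur as stable AR components of self-injective algebras of finite representation type. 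So Riedtmann's structure theorem by itself cannot rule out a finite component, and the reduction to "ruling out the finite Dynkin case" cannot be closed by the combinatorics of $\mathbb{Z}T$ alone. Similarly, in the first paragraph the step "bounded additive function forces $\overline{T}$ Euclidean or infinite Dynkin, again impossible for finitely many vertices" is not correct: Euclidean diagrams are finite, and finiteness of $\overline{T}$ does not contradict finiteness of $\mathcal{C}$.

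What is actually needed, and what the paper uses, is an Auslander-type rigidity result, not a purely combinatorial one: \cite[Proposition 1.26]{AKM} shows that if some component of $\Gamma_s(A)$ has only finitely many vertices, then it is the whole stable AR quiver (equivalently, there are only finitely many non-projective indecomposables in $\mathsf{latt}^{(\natural)}$-$A$). Combined with Theorem~\ref{nonperiodic Heller}, which exhibits a component of shape $\mathbb{Z}A_\infty$ (hence infinite), this immediately forces every component to be infinite. In other words, the argument is global (a finite component would make the whole category finite, contradicting the existence of one known infinite component), whereas your proposal tries to run a local argument within the hypothetical finite component; the local route cannot succeed without an additional input of the Auslander type, which you do not supply. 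The first paragraph's "direct computation" sketch (forcing rank to strictly increase along $\tau^{-1}$-orbits) would indeed be one way to supply such input, but it is left entirely unexecuted and is not obviously true for an arbitrary component $\mathcal{C}$, which need not contain any Heller lattice.
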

\begin{proof}
The assertion follows from \cite[Proposition 1.26]{AKM} and Theorem \ref{nonperiodic Heller}.
\end{proof}

\begin{theorem}\label{main1}
Let $\mathcal{O}$ be a complete discrete valuation ring, $\kappa$ its residue field and $A=\mathcal{O}[X,Y]/(X^{2},Y^{2})$.
Assume that $\kappa$ is algebraically closed, and $\lambda\neq\infty$.
\begin{enumerate}[(1)]
\item If the characteristic of $\kappa$ is $2$, then $\mathcal{CH}(Z_n^{\lambda})\simeq \mathbb{Z}A_{\infty}/\langle \tau\rangle$.
\item If the characteristic of $\kappa$ is not $2$, then 
\[ {
\mathcal{CH}(Z_{n}^{\lambda})\simeq \left\{\begin{array}{ll}
\mathbb{Z}A_{\infty}/\langle \tau \rangle & \text{if $\lambda=0$}, \\
\mathbb{Z}A_{\infty}/\langle \tau^2 \rangle & \text{if $\lambda\neq 0$}.
\end{array}\right. }\] 
\end{enumerate} 
Moreover, any Heller lattice $Z_{n}^{\lambda}$ appears on the boundary of $\mathcal{CH}(Z_{n}^{\lambda})$.
\end{theorem}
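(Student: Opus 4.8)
The plan is to combine Riedtmann's structure theorem (Theorem \ref{Ried}) with the subadditive-function machinery (Theorem \ref{tree class}, Proposition \ref{periodic_case} and Lemma \ref{subadd D}) to pin down the tree class, then use the explicit knowledge of $\tau$-periods to identify the admissible group. First I would record that $\mathcal{CH}(Z_n^\lambda)$ is a periodic component: by Proposition \ref{notuse}(1) we have $\tau Z_n^\lambda\simeq Z_n^{-\lambda}$, hence $\tau^2 Z_n^\lambda\simeq Z_n^\lambda$, so $Z_n^\lambda$ is a $\tau$-periodic vertex and therefore the whole component is periodic. By Lemma \ref{infinitely many} it has infinitely many vertices, and by Corollary \ref{cor3} it has no loops. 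So Proposition \ref{periodic_case}(2) applies: the tree class is an infinite Dynkin diagram, i.e.\ one of $A_\infty$, $A_\infty^\infty$, $B_\infty$, $C_\infty$ or $D_\infty$.

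Next I would use $\mathcal{D}$ to cut the list down to $A_\infty$. Since the component contains the Heller lattice $Z_n^\lambda$, the function $\mathcal{D}$ (the number of non-projective indecomposable summands of $-\otimes\kappa$) is defined on it, and by Lemma \ref{subadd D} it restricts to a subadditive function on the tree class provided $2\mathcal{D}(Z_n^\lambda)\ge\mathcal{D}(E_n^\lambda)$ for the unique Heller lattice in the component. Here Proposition \ref{indec of Heller}(3) gives $\mathcal{D}(Z_n^\lambda)=2$ (the two summands $M(\lambda)_n$ and $M(-\lambda)_n$), while Lemma \ref{E lambda}(1) gives $E_n^\lambda\otimes\kappa\simeq M(\lambda)_{n-1}\oplus M(\lambda)_{n+1}\oplus M(-\lambda)_n^{\oplus 2}$; when $n>1$ this has $\mathcal{D}=4$, and when $n=1$ the summand $M(\lambda)_0\cong M(0)$ — wait, one must be careful: $M(\lambda)_0$ is not among the modules, so for $n=1$ one has only three non-projective summands and $\mathcal{D}(E_1^\lambda)=3$. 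In all cases $2\mathcal{D}(Z_n^\lambda)=4\ge\mathcal{D}(E_n^\lambda)$, so $\mathcal{D}|_T$ is subadditive; moreover $\mathcal{D}$ is bounded (it is bounded by $2$ on the $\tau$-orbit of $Z_n^\lambda$, hence bounded on the component since $\mathcal{D}$ is $\tau$-invariant and, by the shape of $\mathbb{Z}T/G$ together with the subadditivity inequality propagating outward, its values on any vertex are controlled — more cleanly, a subadditive function on an infinite tree that fails to be additive forces tree class $A_\infty$ by Theorem \ref{tree class}(2), and a bounded one forces $A_\infty$ by Theorem \ref{tree class}(4)). Since $\mathcal{D}(Z_n^\lambda)=2$ but $\mathcal{D}$ of a neighbour of the Heller lattice along the tree would have to be at most $2$ and the Heller lattice has only one predecessor summand of type $M(-\lambda)_n$ appearing with multiplicity $2$... the cleanest route is: the equality $2\mathcal{D}(Z_n^\lambda)=\mathcal{D}(E_n^\lambda)$ holds iff $n>1$, fails iff $n=1$; when it fails, $\mathcal{D}|_T$ is subadditive but not additive, so Theorem \ref{tree class}(2) gives $\overline T=A_\infty$. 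When $n>1$ one instead argues directly that the valuation configuration near the Heller lattice (it lies at the end of the tree, with a single arrow to it, by Lemma \ref{E lambda} which shows the middle term has the summand $M(-\lambda)_n^{\oplus 2}=\tau Z_n^\lambda$'s reduction appearing with multiplicity forcing the valued arrow $\tau Z_n^\lambda\to Z_n^\lambda$ to carry valuation $(1,1)$ and $Z_n^\lambda$ to be a boundary vertex) rules out $B_\infty,C_\infty,D_\infty,A_\infty^\infty$, leaving $A_\infty$; this is the step I expect to require the most care, since it is where one genuinely uses the structure of $E_n^\lambda$ rather than just numerics.

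Finally, once $\overline T=A_\infty$, Theorem \ref{Ried} gives $\mathcal{CH}(Z_n^\lambda)\simeq\mathbb{Z}A_\infty/G$ for an admissible $G\subseteq\mathsf{Aut}_\tau(\mathbb{Z}A_\infty)$. The admissible subgroups of $\mathsf{Aut}_\tau(\mathbb{Z}A_\infty)$ are exactly the cyclic groups $\langle\tau^k\rangle$, $k\ge0$. Since the component is periodic, $G\ne 1$, so $G=\langle\tau^k\rangle$ with $k\ge 1$, and $k$ is the $\tau$-period of any vertex. For $\lambda=0$ or ($\mathsf{Char}\,\kappa=2$, so $-\lambda=\lambda$ for every $\lambda$), Proposition \ref{notuse}(1) gives $\tau Z_n^\lambda\simeq Z_n^{-\lambda}\simeq Z_n^\lambda$, so the period is $1$ and $\mathcal{CH}(Z_n^\lambda)\simeq\mathbb{Z}A_\infty/\langle\tau\rangle$. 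For $\mathsf{Char}\,\kappa\ne2$ and $\lambda\ne0,\infty$, we have $\tau Z_n^\lambda\simeq Z_n^{-\lambda}\not\simeq Z_n^\lambda$ (the reductions $M(\lambda)_n\oplus M(-\lambda)_n$ and $M(-\lambda)_n\oplus M(\lambda)_n$ agree, so one must instead distinguish $Z_n^\lambda$ and $Z_n^{-\lambda}$ as $A$-lattices — this is where one invokes the explicit matrix presentations in Notation \ref{notation2}, or simply observes that $\lambda\ne-\lambda$ forces the two lattices to be non-isomorphic because an isomorphism would descend to one of the distinct band parameters), while $\tau^2 Z_n^\lambda\simeq Z_n^\lambda$; hence the period is $2$ and $\mathcal{CH}(Z_n^\lambda)\simeq\mathbb{Z}A_\infty/\langle\tau^2\rangle$. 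The boundary statement — that $Z_n^\lambda$ sits at an end vertex of $A_\infty$ — follows from Lemma \ref{E lambda}: the middle term $E_n^\lambda$ of the almost split sequence ending at $Z_n^\lambda$ has only two indecomposable summands (namely $\tau Z_n^\lambda$ with multiplicity one and $E_n^\lambda$ itself is indecomposable by Lemma \ref{E lambda}(3), so in fact the middle term is indecomposable), which forces $Z_n^\lambda$ to have exactly one predecessor arrow in the component, i.e.\ it is a boundary vertex. I would present these period computations and the boundary argument after the tree-class identification, as they are the shortest part of the argument.
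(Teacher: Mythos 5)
Your setup matches the paper's: periodicity from Proposition~\ref{notuse}, infinitely many vertices from Lemma~\ref{infinitely many}, no loops from Corollary~\ref{cor3}, hence $\overline{T}\in\{A_\infty,B_\infty,C_\infty,D_\infty,A_\infty^\infty\}$ by Proposition~\ref{periodic_case}; and your period computation and boundary argument are sound (your flag that one must check $Z_n^\lambda\not\simeq Z_n^{-\lambda}$ for $\lambda\ne -\lambda$ is a legitimate point the paper glosses over). But your route to $\overline{T}=A_\infty$ via the function $\mathcal{D}$ has a genuine gap for $n>1$. When $n>1$ the equality $2\mathcal{D}(Z_n^\lambda)=\mathcal{D}(E_n^\lambda)=4$ holds, so by Lemma~\ref{subadd D} $\mathcal{D}|_T$ is \emph{additive}, and Theorem~\ref{tree class}(2) gives you nothing. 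Your fallback claim that ``$\mathcal{D}$ is bounded, hence $A_\infty$'' is wrong on two counts: Theorem~\ref{tree class}(4) says \emph{unbounded} forces $A_\infty$, and an additive function on $A_\infty$ is in fact unbounded ($\ell(i)=i\ell(1)$) whereas on $C_\infty$ and $D_\infty$ it is eventually constant. Indeed, the numerical data $\mathcal{D}(Z_n^\lambda)=2$, $\mathcal{D}(E_n^\lambda)=4$ at the boundary is \emph{consistent} with an additive function on $C_\infty$ (where $\ell(1)=\ell(2)/2$) and on $D_\infty$ (where $\ell(1)=\ell(1')=\ell(2)/2$), so additivity alone cannot rule these out. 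The ``valuation configuration near the Heller lattice'' that you gesture at does rule out $B_\infty$ and $C_\infty$ once you actually compute the valuation on the arrow $Z_n^{-\lambda}\to E_n^\lambda$ to be $(1,1)$ using rank bounds, but it does not touch $D_\infty$.

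The paper closes exactly this gap by going one layer deeper: it computes the middle term $F$ of the almost split sequence ending at $E_n^\lambda$, writes $F=Z_n^{-\lambda}\oplus F_n^\lambda$, and proves $F_n^\lambda$ is indecomposable (and not a Heller lattice) using Proposition~\ref{split}, namely the fact that for a non-Heller lattice $L$ the reduction mod $\varepsilon$ of the almost split sequence starting at $L$ splits. Having $F_n^\lambda$ indecomposable means $E_n^\lambda$ has exactly two predecessors in the component, which rules out $D_\infty$ (the branch vertex would need three) and, combined with the boundary position of $Z_n^\lambda$ and the trivial valuation $(1,1)$ on the boundary arrow, pins down $\overline{T}=A_\infty$. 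So your proposal would need to be supplemented with this second-layer indecomposability argument (or an equivalent computation showing $\mathcal{D}(F_n^\lambda)=6$, forcing unboundedness) to actually conclude; for $n=1$, where $\mathcal{D}(E_1^\lambda)=3<4$ and subadditivity is strict, your shortcut does work.
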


\begin{proof}
Lemma \ref{E lambda} implies that every Heller lattice $Z_{n}^{\lambda}$ appears on the boundary of $\mathcal{CH}(Z_{n}^{\lambda})(=\mathcal{CH}(Z_n^{-\lambda}))$. It follows from Proposition \ref{periodic_case} and Lemma \ref{infinitely many} that the tree class $\overline{T}$ of $\mathcal{CH}(Z_n^{\lambda})$ is one of $A_{\infty}$, $B_{\infty}$, $C_{\infty}$, $D_{\infty}$ or $A_{\infty}^{\infty}$. 

Let $F$ be the middle term of the almost split sequence ending at $E_n^{\lambda}$. Then, $F$ is the direct sum of $Z_{n}^{-\lambda}$ and an $A$-lattice $F_n^{\lambda}$. By {Propositions \ref{split}, \ref{indec of Heller} and Lemma \ref{E lambda},} we have 
\[ F_n^{\lambda}\otimes\kappa\simeq M(\lambda)_{n+1}\oplus M(\lambda)_{n-1}\oplus M(-\lambda)_{n+1}\oplus M(-\lambda)_{n-1}\oplus M(\lambda)_n\oplus M(-\lambda)_n. \]
{Note that $F_n^{\lambda}$ has at most two indecomposable direct summands since $\overline{T}$ is one of infinite Dynkin diagrams. 
By indecomposability of $E_n^{\lambda}$, we know that $\overline{T}$ is neither $B_{\infty}$ nor $A_{\infty}^{\infty}$.}

{Suppose that $\overline{T}=C_{\infty}$. Then, it implies from  Propositions \ref{split}, \ref{indec of Heller} and Lemma \ref{E lambda} that $F_n^\lambda=Z_n^\lambda\oplus \widetilde{F}_n^\lambda$, where 
\[   \widetilde{F}_n^{\lambda}\otimes\kappa\simeq M(\lambda)_{n+1}\oplus M(\lambda)_{n-1}\oplus M(-\lambda)_{n+1}\oplus M(-\lambda)_{n-1}. 
\]
On the other hand, the middle term of the almost split sequence ending at $\widetilde{F}_n^{\lambda}$ has $E_n^{-\lambda}$ as a direct summand. This contradicts Proposition \ref{split}.}

{Next, we suppose that $\overline{T}=D_{\infty}$.}
{In this case,} there is an indecomposable direct summand $W$ of $F_n^{\lambda}$ such that the almost split sequence ending at $W$ is of the form $0\to \tau W\to E_n^{-\lambda}\to W\to 0$. 
Then, the induced exact sequence 
\[ 0\to \tau W\otimes\kappa\to E_n^{-\lambda}\otimes\kappa\to W\otimes\kappa\to 0 \]
splits. However, this situation does not occur for any $W$. 

Therefore, $F^{\lambda}_n$ is an indecomposable
$A$-lattice, and $T=A_{\infty}$.
 \end{proof}

\section{The case \mbox{\boldmath $\lambda=\infty$}}
\subsection{The almost split sequence ending at \mbox{\boldmath $Z_{n}^{\infty}$}}

{This subsection studies} the almost split sequence ending at $Z_{n}^{\infty}$. We see that the following lemmas hold {as in the case that $\lambda\neq\infty$.}
 
\begin{lemma} An endomorphism $\rho\in\mathsf{End}_A(Z_n^{\infty})$ is determined by {$\rho(\sf{b}_{1,1}),\ldots,\rho(\sf{b}_{n,1})$.} 
\end{lemma}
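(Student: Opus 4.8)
The plan is to mimic the argument used for $Z_n^{\lambda}$ in the previous section. An endomorphism $\rho\in\mathrm{End}_A(Z_n^{\infty})$ is, by definition, an $\mathcal{O}$-linear map commuting with the actions of $X$ and $Y$. Since $\{\sf{b_{i,j}}\mid \sf{i=1,\ldots,n},\ \sf{j=1,2,3,4}\}$ is an $\mathcal{O}$-basis of $Z_n^{\infty}$, the map $\rho$ is determined by the $4n$ values $\rho(\sf{b_{i,j}})$. The claim is that the $n$ values $\rho(\sf{b_{1,1}}),\rho(\sf{b_{2,1}}),\ldots,\rho(\sf{b_{n,1}})$ already determine all of them (note the statement as printed reads $\rho(\sf{b_{1,n}})$, which should be $\rho(\sf{b_{n,1}})$ — the $\sf{i=1,\ldots,n}$, $\sf{j=1}$ basis vectors). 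So the task is to express each remaining basis vector's image through these using only the relations $X\rho=\rho X$, $Y\rho=\rho Y$ and the explicit formulas for the $X$- and $Y$-actions on $Z_n^{\infty}$ recorded in the Notation \ref{notation2}.

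First I would recover the images of the $\sf{j=2}$ and $\sf{j=4}$ basis vectors: from the action table, $X\sf{b_{i,1}}=\sf{b_{i,2}}$ for $\sf{i\neq 1}$ and $X\sf{b_{1,1}}=\varepsilon\sf{b_{1,2}}$; applying $\rho$ and using $X$-equivariance gives $\rho(\sf{b_{i,2}})=X\rho(\sf{b_{i,1}})$ for $\sf{i\neq 1}$, while $\rho(\sf{b_{1,2}})$ is only pinned down up to $\varepsilon$-torsion by $\varepsilon\rho(\sf{b_{1,2}})=X\rho(\sf{b_{1,1}})$; one then uses that $Z_n^{\infty}$ is $\mathcal{O}$-free (torsion-free), so $\varepsilon$-divisibility of $X\rho(\sf{b_{1,1}})$ forces a unique lift, hence $\rho(\sf{b_{1,2}})$ is determined as well. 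Similarly $XY\sf{b_{i,1}}=X\sf{b_{i,2}}$ yields $\rho(\sf{b_{i,4}})$ from $\rho(\sf{b_{i,2}})$. Next I would recover the $\sf{j=3}$ column: the action $Y\sf{b_{i,1}}=\varepsilon\sf{b_{i,3}}+\sf{b_{i+1,2}}$ for $\sf{i\neq n}$ and $Y\sf{b_{n,1}}=\sf{b_{n,3}}$ gives $\rho(\sf{b_{n,3}})=Y\rho(\sf{b_{n,1}})$ directly, and for $\sf{i<n}$ gives $\varepsilon\rho(\sf{b_{i,3}})=Y\rho(\sf{b_{i,1}})-\rho(\sf{b_{i+1,2}})$, the right-hand side of which is already known; again torsion-freeness of $Z_n^{\infty}$ yields a unique $\rho(\sf{b_{i,3}})$. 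Thus all $\rho(\sf{b_{i,j}})$ are determined by $\rho(\sf{b_{1,1}}),\ldots,\rho(\sf{b_{n,1}})$, which proves the lemma. The $n=1$ case is handled separately and more easily using the degenerate action formulas $Y\sf{b_{1,1}}=\sf{b_{1,3}}$, $X\sf{b_{1,1}}=\varepsilon\sf{b_{1,2}}$, $Y\sf{b_{1,2}}=\sf{b_{1,4}}$.

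The only real subtlety — and the one place an honest argument must be careful — is the division-by-$\varepsilon$ steps: when I write $\rho(\sf{b_{1,2}})=\varepsilon^{-1}X\rho(\sf{b_{1,1}})$ or $\rho(\sf{b_{i,3}})=\varepsilon^{-1}\big(Y\rho(\sf{b_{i,1}})-\rho(\sf{b_{i+1,2}})\big)$, I am implicitly asserting that the bracketed elements are divisible by $\varepsilon$ in $Z_n^{\infty}$. This holds because $\rho$ is a genuine endomorphism (so these elements lie in the image of $\varepsilon\cdot(-)$ by equivariance applied to the relations $X\sf{b_{1,1}}=\varepsilon\sf{b_{1,2}}$ etc.), and $Z_n^{\infty}$ is free over the discrete valuation ring $\mathcal{O}$, hence $\varepsilon$-torsion-free, so the divisor is unique. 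Once that is noted, the rest is a routine bookkeeping chase through the action tables and requires no further ideas; consequently the written proof can legitimately be left as ``straightforward'' or spelled out in a few lines, exactly as the analogous lemma for $Z_n^{\lambda}$ was.
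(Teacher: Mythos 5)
Your proposal is correct and follows essentially the same route as the paper's proof: apply $X$- and $Y$-equivariance to the explicit action tables to express each $\rho(\sf{b_{i,j}})$ with $\sf{j\neq 1}$ in terms of the $\rho(\sf{b_{i,1}})$, dividing by $\varepsilon$ where the action picks up a factor of $\varepsilon$; your extra remark that $\mathcal{O}$-freeness of $Z_n^{\infty}$ makes the division well-defined is a legitimate (if tacit in the paper) justification. One small slip: you wrote $XY\sf{b_{i,1}}=X\sf{b_{i,2}}$, but $X\sf{b_{i,2}}=X^{2}\sf{b_{i,1}}=0$; what you want is $XY\sf{b_{i,1}}=YX\sf{b_{i,1}}=Y\sf{b_{i,2}}$ (equal to $\varepsilon\sf{b_{i,4}}$ for $\sf{i\neq 1}$ and to $\sf{b_{1,4}}$ for $\sf{i=1}$), which is also how the paper recovers the $\sf{j=4}$ column.
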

\begin{proof} 
{Suppose that $n=1$. 
Since
\[ X\sf{b}_{1,1}=\varepsilon \sf{b}_{1,2}, \quad Y\sf{b}_{1,1}=\sf{b}_{1,3},\quad Y\sf{b}_{1,2}=\sf{b}_{1,4}, \]
any endomorphism $\rho\in\sf{End}_A(Z_1^\infty)$ is determined by $\rho(\sf{b}_{1,1})$.
}

{Next, we suppose that ${n}>1$. Let $\rho$ be an {endomorphism} of $Z_n^{\infty}$.} 
Since $\rho$ is an $A$-module homomorphism, we have $X\rho(\sf{b}_{1,1})=\rho(X\sf{b}_{1,1})=\varepsilon \rho(\sf{b}_{1,2})$, and hence $\rho(\sf{b}_{1,2})=\varepsilon ^{-1} X\rho(\sf{b}_{1,1})$ follows. 
For $k\neq 1$, we  have $X\rho(\sf{b}_{k,1})=\rho(\sf{b}_{k,2})$. 
Next for $1\leq k\leq n-1$, the equation 
\[ Y\rho(\sf{b}_{k,1})= \varepsilon \rho(\sf{b}_{k,3})+\rho(\sf{b}_{k+1,2}) \]
implies that $\rho(\sf{b}_{k,3})={\varepsilon ^{-1}(Y\rho(\sf{b}_{k,1})-\rho(\sf{b}_{k+1,2}))}$, and for $k=n$, we have $\rho(\sf{b}_{n,3})=Y\rho(\sf{b}_{n,1})$. 
Finally, {we have
\[
\rho(\sf{b}_{k,4}) = \left\{\begin{array}{ll}
X\rho(\sf{b}_{k,3}) & \text{if $k\neq n$,} \\
\varepsilon^{-1}X\rho(\sf{b}_{n,3}) & \text{if $k=n$}. 
\end{array}
\right.
\]
Therefore,  $\rho$ is determined by $\rho(\sf{b}_{1,1}),\ldots,\rho(\sf{b}_{n,1})$.}
\end{proof}

\begin{lemma}\label{keylemma} 
Let $\rho\in\mathsf{rad}\mathsf{End}_A(Z_n^{\infty})$. 
If we write
\[ \rho(\sf{b}_{k,1})= \sum_{l=1}^{n}{d_{l,1}^{(k)}}\sf{b}_{l,1}+ B(k), \] 
where ${d_{l,1}^{(k)}}\in \mathcal{O}$ {and $B(k)\in \sf{Span}_\mathcal{O}\{\sf{b}_{i,j}\mid j\neq 1\}$,} then the following statements hold.
\begin{enumerate}[(1)]
 \item {The determinant of $\sf{D}=\begin{pmatrix}
d_{1,1}^{(1)} & \cdots & d_{1,1}^{(n)} \\
\vdots & \ddots & \vdots \\
d_{n,1}^{(1)} & \cdots & d_{n,1}^{(n)} \end{pmatrix}$ is in $\varepsilon\mathcal{O}$.
Therefore, $\sf{D}$ is not invertible.}
 \item ${d_{n,1}^{(k)}}\in\varepsilon\mathcal{O}$ for all ${k=1,2,\ldots, n}$. 
 \end{enumerate}
\end{lemma}

\begin{proof} 
(1) We show that any {an endomorphism $\rho\in\mathsf{End}_A(Z_n^{\infty})$ such that the corresponding matrix $\sf{D}$ is invertible} is surjective. 
As {$XY\sf{b}_{l,1}=\varepsilon \sf{b}_{l,4}$} holds for ${l=1,\ldots, n}$, we have
\[ (\rho(\sf{b}_{1,4}), \ldots , \rho(\sf{b}_{n,4})) = (\sf{b}_{1,4},\ldots , \sf{b}_{n,4})\sf{D}. \]
Hence, $\sf{b}_{1,4},\ldots , \sf{b}_{n,4}$ {belong to} the image of $\rho$.

Assume that $n=1$.
{By multiplying both sides of $\rho(\sf{b}_{1,1})={d_{1,1}^{(1)}}\sf{b}_{1,1}+B(1)$ by $X$, we have}
\[ \varepsilon\rho(\sf{b}_{1,2})=\varepsilon {d_{1,1}^{(1)}}\sf{b}_{1,2}+XB(1). \]
{Since $XB(1)\in \varepsilon \mathcal{O}\sf{b}_{1,4}$, there exists $t\in\sf{Span}_{\mathcal{O}}\{\sf{b}_{1,4}\}$ such that
\[ d_{1,1}^{(1)}\sf{b}_{1,2}=\rho(\sf{b}_{1,2}-t). \]
As $d_{1,1}^{(1)}$ is invertible by our assumption, $\sf{b}_{1,2}$ belongs to the image of $\rho$.
By multiplying both sides of $\rho(\sf{b}_{1,1})={d_{1,1}^{(1)}}\sf{b}_{1,1}+B(1)$ by $Y$, we have
\[ d_{1,1}^{(1)}\sf{b}_{1,3}=\rho(\sf{b}_{1,3}-s), \]
for some $s\in\sf{Span}_\mathcal{O}\{\sf{b}_{1,4}\}$. Thus, $\sf{b}_{1,3}$ belongs to the image of $\rho$, and hence
$\rho$ is surjective. }

Next, we assume that $n>1$. We {recall} that
\[ X\rho(\sf{b}_{k,1})=\left\{\begin{array}{ll}
\varepsilon \rho (\sf{b}_{1,2}) & \text{if ${k=1}$},\\
\rho(\sf{b}_{k,2}) & \text{if ${k\neq 1}$,}\end{array}\right. \quad Y\rho(\sf{b}_{k,1})=\left\{\begin{array}{ll}
\rho(\varepsilon\sf{b}_{k,3}+\sf{b}_{k+1,2}) & \text{if $k\neq n$}, \\
\rho(\sf{b}_{n,3}) & \text{if ${k=n}$}. \end{array}\right. \]
{Thus, we have}
\[ X\left(\sum_{l=1}^{n}{d_{l,1}^{(k)}}\sf{b}_{l,1}+ B({k})\right)=\varepsilon {d_{1,1}^{(k)}}\sf{b}_{1,2}+\sum_{{l=2}}^{{n}}{d_{l,1}^{(k)}}\sf{b}_{l,2}+XB({k}), \]
\[ Y\left(\sum_{l=1}^{n}{d_{l,1}^{(k)}}\sf{b}_{l,1}+ B({k})\right)=\sum_{l=1}^{n-1}{d_{l,1}^{(k)}}(\varepsilon \sf{b}_{l,3}+\sf{b}_{l+1,2})+{d_{n,1}^{(k)}}\sf{b}_{n,3}+YB({k}). \]
{{Also} we observe} that $XB(\sf{k})$ and $YB({k})$ belong to $\sf{Span}_\O\{\sf{b}_{i,4}\mid i=1,\ldots,n\}$. 

{If $k=1$, then} the equality
\[ \varepsilon \rho(\sf{b}_{1,2}) = \varepsilon d_{1,1}^{(1)}{\sf{b}_{1,2}}+\sum_{{l=2}}^n{d_{l,1}^{(1)}}{\sf{b}_{l,2}}+XB(1) \]
implies that {$d_{l,1}^{(1)}\equiv XB(1)\equiv 0$ modulo $\varepsilon \mathcal{O}$ for all $l=2,3,\ldots n$.} 
Thus, there exists $x(1)\in Z_n^{\infty}$ such that $\varepsilon\rho(x(1))=XB(1)$. 
If $k>1$, then, for each $\sf{k}$, there exists $x(k)\in Z_n^{\infty}$ such that $\rho(x(k))=XB(k)$. 
Therefore, it is easy to see that 
\[ (\rho(\sf{b}_{1,2}-x(1)), \ldots , \rho(\sf{b}_{n,2}-x(n))) = (\sf{b}_{1,2},\ldots , \sf{b}_{n,2})\left( \begin{array}{cccc}
{d_{1,1}^{(1)}}& \varepsilon {d_{1,1}^{(2)}} & & \varepsilon {d_{1,1}^{(n)}}\\
\varepsilon ^{-1}{d_{2,1}^{(1)}}& {d_{2,1}^{(2)}} & & {d_{2,1}^{(n)}}\\
\vdots & \vdots & \cdots & \vdots \\
\varepsilon^{-1} {d_{n,1}^{(1)}} & {d_{n,1}^{(2)}} & & {d_{n,1}^{(n)}}\end{array}\right). \]
Since the determinant of the rightmost matrix in the above equation equals to $\det \sf{D}$, each element $\sf{b}_{l,2}$ {$(l=1,2,\ldots ,n)$} belongs to the image of $\rho$. 

{To show $\sf{b}_{1,3}, \sf{b}_{2,3},\ldots ,\sf{b}_{n,3}$ belong to the image of $\rho$, for each $k=1,2,\ldots ,n$, we take $y(k)\in Z_n^{\infty}$ such that
\[ \rho(y(k))=YB(k)+\sum_{l=1}^{n-1}{d_{l,1}^{(k)}}\sf{b}_{l+1,2}. \] 
}
Then, we have the following equations;
{
\begin{align} 
\rho(\sf{b}_{k+1,2}-y(k)) &= -\varepsilon\rho(\sf{b}_{k,3})+\sum_{l=1}^{n-1}\varepsilon {d_{l,1}^{(k)}}\sf{b}_{l,3}+{d_{n,1}^{(k)}}\sf{b}_{n,3} \quad  \text{(if $k\neq n$)}, \label{a} \\
\rho(\sf{b}_{n,3}-y(n)) & =\sum_{l=1}^{n-1}\varepsilon {d_{l,1}^{(n)}}\sf{b}_{l,3}+{d_{n,1}^{(n)}}\sf{b}_{n,3}. \label{b}
\end{align}
For each $k=1,2,\ldots ,n-1$, all cofficients of $\rho({\sf{b}_{k+1,2}}-y(k))$ belong to $\varepsilon\mathcal{O}$ since
$\rho(\sf{b}_{k+1,2}-y(k))\in\sf{Span}_\mathcal{O}\{\sf{b}_{i,2}, \sf{b}_{i,4} \mid i=1,\ldots,n\}$. 
Thus one can define $z(k)\in Z_{n}^{\infty}$ as follows;
\[ \rho(z(k))= \left\{\begin{array}{ll}
\varepsilon^{-1} \rho(\sf{b}_{k+1,2}-y(k)) & \text{if $k\neq n$}, \\
\rho(-y(n))& \text{if $k=n$}. 
\end{array}\right.\]
Then, the equation (\ref{a}) implies that
\[ d_{n,1}^{(k)}\sf{b}_{n,3} = \varepsilon\left(\rho(\sf{b}_{k,3})+\rho(z(k))-\sum_{l=1}^{n-1}d_{l,1}^{(k)}\sf{b}_{l,3}\right). \]
Hence, $d_{n,1}^{(k)}\in\varepsilon\mathcal{O}$ for $k=1,2,\ldots ,n-1$.
Thus, we have {by (\ref{a}) and (\ref{b})}
\[ (\rho(\sf{b}_{1,3}+z(1)),\ldots,\rho(\sf{b}_{n,3}+z(n)))=(\sf{b}_{1,3},\ldots,\sf{b}_{n,3})\left( \begin{array}{cccc}
{d_{1,1}^{(1)}}&  & {d_{1,1}^{(n-1)}} & \varepsilon {d_{1,1}^{(n)}} \\
\vdots & \cdots & \vdots & \vdots \\
{d_{n-1,1}^{(1)}} &  & {d_{n-1,1}^{(n-1)}} & \varepsilon {d_{n-1,1}^{(n)}} \\
\varepsilon ^{-1}{d_{n,1}^{(1)}}& &\varepsilon ^{-1}{d_{n,1}^{(n-1)}}  & {d_{n,1}^{(n)}}\end{array}\right). \]
}
Since the determinant of the rightmost matrix in the above equation equals to $\det \sf{D}$, each element $\sf{b}_{i,3}$ belongs to the image of $\rho$. 
Therefore, the $A$-morphism $\rho$ is surjective.

(2) The statement for $n=1$ is clear by (1). In order to prove this statement for $n>1$, we compute $\rho(Y\sf{b}_{k,1}-X\sf{b}_{k+1,1})$, for $k=1,2,\ldots ,n-1$, in two ways. 
Set $W(k)=YB(k)-XB(k+1)$. 
Since $Y\sf{b}_{k,1}=\varepsilon\sf{b}_{k,3}+\sf{b}_{k+1,2}$ and $\sf{b}_{k,2}=X\sf{b}_{k,1}$, we have
\[  \rho(Y\sf{b}_{k,1}-X\sf{b}_{k+1,1})=\varepsilon \rho(\sf{b}_{k,3}). \]
On the other hand, we have
\[ \rho(Y\sf{b}_{k,1}-X\sf{b}_{k+1,1})=-\varepsilon {d_{1,1}^{(k+1)}}\sf{b}_{1,2}+\sum_{l=2}^{n}({d_{l-1,1}^{(k)}}-{d_{l,1}^{(k+1)}})\sf{b}_{l,2}+\sum_{l=1}^{n-1}\varepsilon {d_{l,1}^{(k)}}\sf{b}_{l,3}+{d_{n,1}^{(k)}}\sf{b}_{n,3}+W({k}). \]
Thus, we have
\[ {{d_{l-1,1}^{(k)}}-\sf{d_{l,1}^{(k+1)}}\equiv {d_{n,1}^{(k)}}\equiv 0\ \mathrm{mod}\ \varepsilon\mathcal{O}\quad {l=2,\ldots,n},\  {k=1,\ldots, n-1}. }\]
This means the strictly {upper} entries of the matrix $\sf{D}$ belong to $\varepsilon \mathcal{O}$. By the statement (1),
\[ \sf{det}(\sf{D})\equiv {d_{1,1}^{(1)}}{d_{2,1}^{(2)}}\cdots {d_{n,1}^{(n)}} + \sum_{e\neq\sigma\in S_n}\mathrm{sgn}(\sigma){d_{1,1}^{(\sigma(1))}}\cdots {d_{n,1}^{(\sigma(n))}}\equiv {d_{1,1}^{(1)}}{d_{2,1}^{(2)}}\cdots {d_{n,1}^{(n)}}\quad \mathrm{mod}\varepsilon \mathcal{O}, \]
where $S_n$ is the symmetric group of degree $n$ and $e$ is its identity element. Now, the claim is clear.
\end{proof}

Recall that the projective cover of $Z_n^{\infty}$ is given by
\[ \begin{array}{ccccl}
\pi _{n,\infty} & : & A^{2n} & \longrightarrow & \quad Z_n^{\infty} \\
 & & e_i & \longmapsto & \left\{\begin{array}{ll}
                                                 \sf{b}_{1,1}& \text{if $i=1,$} \\
                                                 \sf{b}_{1,2} & \text{if $i=2$,} \\
                                                 \sf{b}_{k,3} & \text{if $i=2k+1$, ${k=1,2,\ldots ,n-1}$}, \\
                                                 \sf{b}_{k,1}& \text{if $i=2k$, ${k=2,3,\ldots ,n}$}. \\
                                                 \end{array}\right. \end{array} \]
Now, for each $n\geq 1$, we define an endomorphism $\Phi_n^{\infty}:Z_n^{\infty}\to Z_n^{\infty}$ by 
\[ \sf{b}_{k,1}\longmapsto \left\{\begin{array}{ll}
\sf{b}_{n,4} & \text{if ${k=n}$,} \\
0 & \text{otherwise.} \end{array}\right. \]

First, we construct the almost split sequence ending at $Z_1^{\infty}$ by using $\Phi_1^{\infty}$. 
\begin{lemma}\label{Z(1,inf)}
Let $\Phi_1^{\infty} :Z_{1}^{\infty}\to Z_{1}^{\infty}$ as above. Then, the following statements hold.
\begin{enumerate}[(1)]
\item $\Phi_1^{\infty}$ does not factor through $\pi_{1,\infty}$.
\item For any $\rho\in\mathsf{rad}\mathsf{End}_A(Z_{1}^{\infty})$, {$\Phi_1^{\infty}\circ\rho$} factors through $\pi_{1,\infty}$.
\end{enumerate}
\end{lemma}
\begin{proof} 
(1) Suppose that there exists {$\psi:Z_{1}^{\infty} \to A\oplus A$} such that $\Phi_1^{\infty}={\pi_{1,\infty}\circ\psi}$. 
{Put
\[ \psi(\sf{b}_{1,1})=a_1e_1+a_2Xe_1+a_3Ye_1+a_4XYe_1+b_1e_2+b_2Xe_2+b_3Ye_2+b_4XYe_2,
\]
where $a_i,b_i\in\mathcal{O}$ $(i=1,2,3,4)$.}
Then, we have
\[ \sf{b}_{1,4} = \Phi_1^{\infty}(\sf{b}_{1,1}) = {\pi_{1, \infty}\circ\psi(\sf{b}_{1,1}) = a_1\sf{b}_{1,1}+(\varepsilon a_2+b_1)\sf{b}_{1,2}+a_3\sf{b}_{1,3}+(\varepsilon a_4+b_3)\sf{b}_{1,4}. }\]
{Thus, $a_1=a_3=0$, $\varepsilon a_2+b_1=0$ and $\varepsilon a_4+b_3=1$.}
{On the other hand, by multiplying $X$ to $\psi(\sf{b_{1,1}})$, we have
\[ \varepsilon \psi(\sf{b_{1,2}})=X{\psi}(\sf{b_{1,1}})=-\varepsilon a_2Xe_2+(1-\varepsilon a_4)XYe_2, \]
a contradiction.}

(2) Let $\rho\in \mathsf{rad}\mathsf{End}_A(Z_{1}^{\infty})$. 
We write $\rho(\sf{b}_{1,1})=\alpha \sf{b}_{1,1}+B(1)$, where $\alpha\in\mathcal{O}$ and $B(1)\in \sf{Span}_{\mathcal{O}}\{\sf{b}_{1,2},\sf{b}_{1,3},\sf{b}_{1,4}\}$. 
By Lemma \ref{keylemma}, $\alpha=\varepsilon\alpha'$ for some $\alpha'\in\mathcal{O}$. 
Define an $A$-module homomorphism $\psi:Z_{1}^{\infty}\to A\oplus A$ by $\psi(\sf{b}_{1,1})=\alpha' XYe_1$. 
Since $\pi_{1,\infty}(\alpha ' XYe_1)=\alpha 'XY\sf{b}_{1,1}=\varepsilon \alpha'\sf{b}_{1,4}$, we have ${\Phi_1^{\infty}\circ\rho(\sf{b}_{1,1})}=\alpha \sf{b}_{1,4}={\pi_{1,\infty}\circ\psi(\sf{b}_{1,1})}$.
\end{proof}

From now on, we construct the almost split sequence ending at $Z_n^{\infty}$ for $n\geq 2$.

\begin{lemma}\label{Z(n,inf)}
Let $\Phi_n^{\infty} :Z_{n}^{\infty}\to Z_{n}^{\infty}$ as above. Then, the following statements hold.
\begin{enumerate}[(1)]
\item $\Phi_n^{\infty}$ does not factor through $\pi_{n,\infty}$.
\item For any $\rho\in\mathsf{rad}\mathsf{End}_A(Z_{n}^{\infty})$, {$\Phi_n^{\infty}\circ\rho$} factors through $\pi_{n,\infty}$.
\end{enumerate}
\end{lemma}
\begin{proof} 
(1) Suppose that there exists {$\psi: Z_n^{\infty}\to A^{2n}$ such that $\Phi_n^{\infty}=\pi_{n,\infty}\circ\psi$.} 
We put 
\begin{equation}\label{psi-exp}
{\psi(\sf{b}_{k,1})=\sum_{l=1}^{2n}\left(a_{l,1}^{(k)}e_l+a_{l,2}^{({k})}Xe_l+a_{l,3}^{({k})}Ye_l +a_{l,4}^{({k})}XYe_l\right).} 
\end{equation}
Then, we notice that, for all ${k=1,\ldots, n}$ and $l=1,\ldots ,2n$, $a_{l,1}^{(k)}$ belongs to $\varepsilon\mathcal{O}$ since $XY\sf{b_{k,1}}=\varepsilon\sf{b}_{k,4}$ for all ${k=1,\ldots, n}$. 
By comparing the coefficient of $\sf{b}_{n,4}$ in $\Phi_{n}^{\infty}(\sf{b}_{n,1})$ with that in ${\pi_{n, \infty}\circ\psi}(\sf{b}_{n,1})$,  we have $\varepsilon a_{2n,4}^{{(n)}}-a_{2n-1,3}^{{(n)}}=1.$ 
In order to obtain a contradiction we show that $a_{2n-1,3}^{{(n)}}\in\varepsilon\mathcal{O}$.
For ${s=1,\ldots, n}$ and  ${t=1,\ldots, n-1}$, by comparing the coefficient of $\sf{b}_{t,4}$ in $\Phi_{n}^{\infty}(\sf{b}_{s,1})$ with that in ${\pi_{n, \infty}\circ\psi}(\sf{b}_{s,1})$, we obtain the following equations:
\begin{align}
\varepsilon a_{1,4}^{({s})}+a_{2,3}^{{(s)}}+a_{3,2}^{{(s)}} = 0 &\quad {t=1},  \label{4.4.2}\\ 
-a_{2t-1,3}^{({s})}+\varepsilon a_{2t,4}^{{(s)}}+{a_{2t+1,2}^{({s})}} = 0 &\quad {t>1}. \label{4.4.3}
\end{align}
On the other hand, (\ref{psi-exp}) implies the following equations:
 \begin{align}
\psi(\sf{b}_{s,2}) & =  X\psi(\sf{b}_{s,1})  =  \sum_{l=1}^{2n}\left( a_{l,1}^{({s})}Xe_l+a_{l,3}^{{(s)}}XYe_l\right)  & {s\neq 1} \label{4.4.4} \\
\varepsilon\psi(\sf{b}_{{s,3}})+\psi(\sf{b}_{{s+1,2}})  & =  Y\psi(\sf{b}_{{s,1}})  = \sum_{l=1}^{2n}\left(  a_{l,1}^{({s})}Ye_l+ a_{l,2}^{{(s)}}XYe_l \right) & {s\neq n} \label{4.4.5}
\end{align}
In particular, it follows from {(\ref{4.4.4}) and (\ref{4.4.5}) that the equation 
\[ \varepsilon\psi(\sf{b}_{s,3})=\sum_{l=1}^{2n}\left( {-a_{l,1}^{{(s+1)}}}Xe_l+a_{l,1}^{(s)}Ye_l+\left(a_{l,2}^{(s)}-a_{l,3}^{(s+1)}\right)XYe_l\right) \] 
holds when $s\neq n$. 
This implies that $a_{2n-1,3}^{(n)}\in\varepsilon \mathcal{O}$ if and only if $a_{2n-1,2}^{(n-1)}\in\varepsilon\mathcal{O}$.
By (\ref{4.4.3}),  it is equivalent to $a_{2n-3,3}^{(n-1)}\in\varepsilon \mathcal{O}$.
By repeating this procedure, we deduce that $a_{2n-1,3}^{(n)}\in\varepsilon\mathcal{O}$ if and only if $a_{3,2}^{(1)}\in\varepsilon \mathcal{O}$. 
By using (\ref{4.4.2}), $a_{3,2}^{(1)}\in\varepsilon \mathcal{O}$ if and only if $a^{(1)}_{2,3}\in\varepsilon\mathcal{O}$.
Since
\[ \varepsilon \psi(\sf{b}_{1,2})=X\psi(\sf{b}_{1,1})=\sum_{l=1}^{2n}\left(a_{l,1}^{(1)}Xe_l+a_{l,3}^{(1)}XYe_l\right), \] 
we conclude that $a_{2,3}^{(1)}$ belongs to $\varepsilon\mathcal{O}$. }

(2) Let $\rho\in\mathsf{rad}\mathsf{End}_A(Z_{n}^{\infty})$. We put 
\[ \rho(\sf{b}_{k,1})= \sum_{l=1}^{n}{d_{l,1}^{(k)}}\sf{b}_{l,1}+ B({k}), \]
where $B({k})\in \sf{Span}_\mathcal{O}\{\sf{b}_{i,j} \mid j\neq 1\}$. 
By Lemma \ref{keylemma}, there are ${e_{n,1}^{(k)}}$ such that ${d_{n,1}^{(k)}}=\varepsilon {e_{n,1}^{(k)}}$. 
Define an $A$-module homomorphism {$\psi:Z_n^{\infty}\to A^{2n}$ as follows:  
\[ \psi(\sf{b}_{k,1}) = {e_{n,1}^{(k)}}XYe_{2n}. \]  
}
Then, it is easy to check that {$\Phi_n^{\infty}\circ\rho(\sf{b}_{k,1})={d_{n,1}^{(k)}}\sf{b}_{n,4}=\pi_{n,\infty}\circ\psi(\sf{b}_{k,1})$}.
\end{proof}

Summing up, we obtain the following proposition.

\begin{proposition}\label{Middle E}
Consider the following pull-back diagram:
$$\begin{xy}
(0,15)*[o]+{0}="01",(20,15)*[o]+{Z_n^{\infty}}="L",(40,15)*[o]+{E_n^{\infty}}="E", (60,15)*[o]+{Z_n^{\infty}}="M",(80,15)*[o]+{0}="02",
(0,0)*[o]+{0}="03",(20,0)*[o]+{Z_n^{\infty}}="L2",(40,0)*[o]+{A^{2n}}="nP", (60,0)*[o]+{Z_n^{\infty}}="nM",(80,0)*[o]+{0}="04",
\ar "01";"L"
\ar "L";"E"
\ar "E";"M"
\ar "M";"02"
\ar "03";"L2"
\ar "L2";"nP"
\ar "nP";"nM"_{\pi_{n,\infty}}
\ar "nM";"04"
\ar @{-}@<0.5mm>"L";"L2"
\ar @{-}@<-0.5mm>"L";"L2"
\ar "E";"nP"
\ar "M";"nM"^{\Phi_n^{\infty}}
\end{xy}$$
Then, the upper exact sequence is the almost split sequence ending at $Z_n^{\infty}$.
\end{proposition}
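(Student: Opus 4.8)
The plan is to deduce this immediately from the pull-back criterion for almost split sequences, Proposition \ref{AKM}, exactly as was done for $\lambda\neq\infty$ in Proposition \ref{Middle E2}. I would apply Proposition \ref{AKM} to the indecomposable lattice $M=Z_n^{\infty}$ in $\sf{latt}^{(\natural)}$-$A$, its projective cover $p=\pi_{n,\infty}:A^{2n}\to Z_n^{\infty}$, and the distinguished endomorphism $\varphi=\Phi_n^{\infty}\in\End_A(Z_n^{\infty})$. Since $A=\O[X,Y]/(X^2,Y^2)$ is symmetric, the Nakayama functor $\nu$ is isomorphic to the identity (so that $\tau\simeq\Omega$), and hence $\nu(p)$ is identified with $\pi_{n,\infty}$ and the pull-back diagram appearing in Proposition \ref{AKM} becomes precisely the one in the statement, with $\Ker(\nu(p))=\Ker(\pi_{n,\infty})$.

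It then remains only to verify conditions (i)--(iii) of Proposition \ref{AKM}. Condition (i), that $\Phi_n^{\infty}$ does not factor through $\pi_{n,\infty}$, is Lemma \ref{Z(1,inf)}(1) when $n=1$ and Lemma \ref{Z(n,inf)}(1) when $n\geq 2$; condition (iii), that $\Phi_n^{\infty}\rho$ factors through $\pi_{n,\infty}$ for every $\rho\in\mathrm{rad}\End_A(Z_n^{\infty})$, is likewise Lemma \ref{Z(1,inf)}(2), respectively Lemma \ref{Z(n,inf)}(2). For condition (ii) I would observe that, $A$ being symmetric, $\Ker(\pi_{n,\infty})=\Omega Z_n^{\infty}=\tau Z_n^{\infty}$, and that Proposition \ref{notuse}(2) provides an explicit isomorphism $\tau Z_n^{\infty}\simeq Z_n^{\infty}$; since $Z_n^{\infty}$ is indecomposable by Proposition \ref{indec of Heller}(2), so is $\Ker(\pi_{n,\infty})$. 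The final clause of Proposition \ref{AKM} then yields that the upper row of the pull-back is the almost split sequence ending at $Z_n^{\infty}$, and replacing $\Ker(\pi_{n,\infty})$ by the isomorphic lattice $Z_n^{\infty}$ displays it in the form of the diagram.

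Essentially all of the genuine work is already packaged into the two preceding lemmas, whose proofs rest on the somewhat delicate $\varepsilon$-adic bookkeeping carried out above (tracking how divisibility by $\varepsilon$ propagates through the entries of an endomorphism). The only new point needed here is the identification $\Ker(\pi_{n,\infty})\simeq Z_n^{\infty}$ for the indecomposability clause, and the main thing to be careful about is purely organizational: one must check that the map $\pi_{n,\infty}$ used in Lemmas \ref{Z(1,inf)} and \ref{Z(n,inf)} is literally the same projective cover whose kernel was computed in Proposition \ref{notuse}(2), so that all the ingredients are genuinely compatible. I do not anticipate any real obstacle beyond that.
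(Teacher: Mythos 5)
Your proposal is correct and follows the same route as the paper: the paper's proof is simply the one-line citation of Proposition \ref{AKM} together with Lemmas \ref{Z(1,inf)} and \ref{Z(n,inf)}. You have made the remaining implicit points explicit, namely that for symmetric $A$ one has $\nu\simeq\mathrm{id}$ so the Nakayama pull-back in Proposition \ref{AKM} reduces to the displayed one, and that condition (ii) — indecomposability of $\mathrm{Ker}(\pi_{n,\infty})$ — follows from Proposition \ref{notuse}(2) and Proposition \ref{indec of Heller}(2); this is precisely what the paper leaves to the reader.
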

\begin{proof}
The statement follows from Proposition \ref{AKM} and Lemmas \ref{Z(1,inf)} and \ref{Z(n,inf)}.
\end{proof}

\subsection{The middle term of the almost split sequence ending at \mbox{\boldmath $Z_{n}^{\infty}$}}

In this subsection, we study the middle term of the almost split sequence ending at $Z_n^{\infty}$, say $E_n^{\infty}$, and explain some properties of $E_n^{\infty}$.

\begin{lemma}\label{E(1,inf)}
\begin{enumerate}[(1)]
\item An $\mathcal{O}$-basis of $E_1^{\infty}$ is given by
\[\O(\varepsilon e_2-Xe_1)\oplus\O Xe_2\oplus\O (\varepsilon Ye_2-XYe_1) \oplus \O XYe_2 
\oplus\O(\sf{b}_{1,1}+Ye_2) \oplus \O \sf{b}_{1,2}\oplus\O\sf{b}_{1,3}\oplus \O \sf{b}_{1,4}.\]
\item There is an isomorphism $E_1^{\infty}\otimes \kappa\simeq M(\infty)_{1}^{\oplus 2}\oplus M(\infty)_{2}$.
\item We have an isomorphism $(\tau E_1^{\infty})\otimes\kappa\simeq M(\infty)_{1}^{\oplus 2}\oplus M(\infty)_{2}$. 
\item $E_1^{\infty}$ is a non-projective indecomposable $A$-lattice. 
\end{enumerate}
\end{lemma}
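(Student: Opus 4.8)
The plan is to treat the four assertions in order, starting from the pullback description $E_1^{\infty}=\{(x,y)\in A^2\oplus Z_1^{\infty}\mid \pi_{1,\infty}(x)=\Phi_1^{\infty}(y)\}$ furnished by Proposition \ref{Middle E}. For (1), note that $E_1^{\infty}$ sits in a short exact sequence $0\to\ker\pi_{1,\infty}\to E_1^{\infty}\to Z_1^{\infty}\to 0$ with surjection $(x,y)\mapsto y$, and an $\mathcal{O}$-basis of $\ker\pi_{1,\infty}$ is the one recorded in the proof of Proposition \ref{notuse}, namely $\varepsilon e_2-Xe_1$, $Xe_2$, $\varepsilon Ye_2-XYe_1$, $XYe_2$. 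It then remains to lift the $\mathcal{O}$-basis $\sf{b_{1,1}},\sf{b_{1,2}},\sf{b_{1,3}},\sf{b_{1,4}}$ of $Z_1^{\infty}$: since $\Phi_1^{\infty}$ annihilates $\sf{b_{1,2}},\sf{b_{1,3}},\sf{b_{1,4}}$, each $(0,\sf{b_{1,j}})$ ($j=2,3,4$) lies in $E_1^{\infty}$, and since $\Phi_1^{\infty}(\sf{b_{1,1}})=\sf{b_{1,4}}=Y\sf{b_{1,2}}=\pi_{1,\infty}(Ye_2)$, so does $(Ye_2,\sf{b_{1,1}})$. As the two outer terms are $\mathcal{O}$-free the sequence splits over $\mathcal{O}$, so these four lifts together with the four kernel generators form an $\mathcal{O}$-basis, which is exactly the list in (1) once one writes $\sf{b_{1,1}}+Ye_2$ for $(Ye_2,\sf{b_{1,1}})$ and $\sf{b_{1,j}}$ for $(0,\sf{b_{1,j}})$.

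For (2) I would reduce this basis modulo $\varepsilon$ and compute the $X$- and $Y$-actions on $E_1^{\infty}\otimes\kappa$ directly, using the $n=1$ formulas for $Z_1^{\infty}$ from \ref{notation2} together with the action on $A^2$. One finds that modulo $\varepsilon$, $X$ annihilates every basis class except that of $\sf{b_{1,1}}+Ye_2$, which it sends to the class of $XYe_2$, while $Y$ sends the classes of $\varepsilon e_2-Xe_1$, $Xe_2$, $\sf{b_{1,1}}+Ye_2$, $\sf{b_{1,2}}$ to the classes of $\varepsilon Ye_2-XYe_1$, $XYe_2$, $\sf{b_{1,3}}$, $\sf{b_{1,4}}$ respectively and kills the others. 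Hence the $\kappa$-spans of $\{\varepsilon e_2-Xe_1,\varepsilon Ye_2-XYe_1\}$ and of $\{\sf{b_{1,2}},\sf{b_{1,4}}\}$ are submodules isomorphic to $M(\infty)_1$, the $\kappa$-span of $\{Xe_2,XYe_2,\sf{b_{1,1}}+Ye_2,\sf{b_{1,3}}\}$ is a submodule isomorphic to $M(\infty)_2$, and since these exhaust the basis, $E_1^{\infty}\otimes\kappa\simeq M(\infty)_1^{\oplus 2}\oplus M(\infty)_2$. In particular $E_1^{\infty}\otimes\kappa$ has no summand isomorphic to $\overline{A}$, the unique indecomposable projective $\overline{A}$-module, so $E_1^{\infty}$ is not projective --- this is the first half of (4), and it makes Lemma \ref{proj cover} applicable, so $(\tau E_1^{\infty})\otimes\kappa\simeq\widetilde{\Omega}(E_1^{\infty}\otimes\kappa)$; applying Lemma \ref{proj1} ($\widetilde{\Omega}M(\infty)_n\simeq M(\infty)_n$) summandwise then gives (3).

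It remains to prove that $E_1^{\infty}$ is indecomposable. Suppose $E_1^{\infty}=E_1\oplus E_2$ with $E_1,E_2\neq 0$. Since $\sf{latt}^{(\natural)}$-$A$ is closed under summands and $A\otimes\mathcal{K}$ is local with a unique indecomposable projective of $\mathcal{K}$-dimension $4$, every nonzero summand of $E_1^{\infty}$ has rank a positive multiple of $4$; as the two ranks add to $8$ they both equal $4$, so that $E_1$ and $E_2$ are both indecomposable. By (2) and Krull--Schmidt over $\overline{A}$ we may assume $E_1\otimes\kappa\simeq M(\infty)_2$ and $E_2\otimes\kappa\simeq M(\infty)_1^{\oplus 2}$; then $E_1$ is non-projective (as $M(\infty)_2\not\simeq\overline{A}$) and is not a direct summand of any Heller lattice, because every Heller lattice of $A$ is indecomposable and the reduction of each indecomposable Heller lattice is one of $M(m-1)\oplus M(m)$, $M(\lambda)_n\oplus M(-\lambda)_n$, $M(\infty)_n^{\oplus 2}$ by Theorem \ref{nonperiodic Heller} and Proposition \ref{indec of Heller}, whereas $M(\infty)_2$ is indecomposable. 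By Proposition \ref{Middle E} together with $\tau Z_1^{\infty}\simeq Z_1^{\infty}$ (Proposition \ref{notuse}), the almost split sequence ending at $Z_1^{\infty}$ is $0\to Z_1^{\infty}\to E_1^{\infty}\to Z_1^{\infty}\to 0$, so its left-hand map has an irreducible component $Z_1^{\infty}\to E_1$, whence $Z_1^{\infty}$ is a direct summand of the middle term $F$ of the almost split sequence $0\to\tau E_1\to F\to E_1\to 0$. Now $\tau E_1\otimes\kappa\simeq\widetilde{\Omega}M(\infty)_2\simeq M(\infty)_2$ by Lemmas \ref{proj cover} and \ref{proj1}, and by Proposition \ref{split} the reduction of this sequence splits, so $F\otimes\kappa\simeq M(\infty)_2^{\oplus 2}$; but then $Z_1^{\infty}\otimes\kappa\simeq M(\infty)_1^{\oplus 2}$ would be a summand of $M(\infty)_2^{\oplus 2}$, impossible by Krull--Schmidt since $M(\infty)_1\not\simeq M(\infty)_2$. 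Hence $E_1^{\infty}$ is indecomposable.

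The main obstacle is the computation in (2): one must reduce modulo $\varepsilon$ a basis whose elements mix the $A^2$- and $Z_1^{\infty}$-components through the pullback (for instance $X(\sf{b_{1,1}}+Ye_2)=XYe_2+\varepsilon\sf{b_{1,2}}$), track the $X$- and $Y$-actions carefully, and then recognize the three string submodules correctly; with (2) in hand, (3) is a one-line consequence and (4) is the short Krull--Schmidt/Heller-lattice argument above, whose one delicate point is checking that the reduction $M(\infty)_2$ really prevents $E_1$ from being a direct summand of any Heller lattice.
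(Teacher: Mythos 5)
Your proposal is correct and follows essentially the same route as the paper: (1) is done by lifting the kernel basis plus suitable preimages through the pullback, (2) by the same three-submodule decomposition (your spans $\{\alpha_1,\alpha_3\}$, $\{\beta_2,\beta_4\}$, $\{\alpha_2,\alpha_4,\beta_1,\beta_3\}$ are exactly the paper's $E(\infty,1)_1$, $E(\infty,1)_2$, $E(\infty,1)_3$), (3) by combining Lemmas \ref{proj cover} and \ref{proj1}, and (4) by observing that the summand reducing to $M(\infty)_2$ is not a Heller lattice, so Proposition \ref{split} forces $Z_1^\infty\otimes\kappa\simeq M(\infty)_1^{\oplus 2}$ to be a summand of $M(\infty)_2^{\oplus 2}$, a Krull--Schmidt contradiction. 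The only difference is that you swap the labels $E_1$, $E_2$ and spell out the intermediate AR-quiver step (that an irreducible map $Z_1^\infty\to E_1$ yields $Z_1^\infty$ as a summand of the middle term of the AS sequence ending at $E_1$), which the paper leaves implicit.
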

\begin{proof} (1) Straightforward. 

(2) We put 
\begin{align*}
& E(\infty,1)_1:= \sf{Span}_\kappa\{(\varepsilon e_2-Xe_1), ({\varepsilon Ye_2}-XYe_1)\}, \\
& E(\infty,1)_2:=\sf{Span}_\kappa\{\sf{b}_{1,2}, \sf{b}_{1,4}\}, \\
& E(\infty,1)_3:=\sf{Span}_\kappa\{(Xe_2), (XYe_2), (\sf{b}_{1,1}+Ye_2),\sf{b_{1,3}} \}.
\end{align*}
Then, it is easy to check that $E(\infty,1)_1\simeq E(\infty,1)_2\simeq M(\infty)_1$ and $E(\infty,1)_3\simeq M(\infty)_2$.

(3) This follows from Lemmas \ref{proj cover}, \ref{proj1} and the statement (2).

(4) Suppose that $E_1^{\infty}$ is decomposable. We write $E_1^{\infty}=E_1\oplus E_2$ as $A$-lattices with $E_1\neq 0\neq E_2$. Then, the ranks of $E_1$ and $E_2$ are divisible by four. Thus, one can assume that $E_1\otimes\kappa\simeq M(\infty)_1^{\oplus 2}$, $E_2\otimes\kappa\simeq M(\infty)_2$, and $E_1$ and $E_2$ are indecomposable. Then, the $A$-lattice $E_2$ is not isomorphic to any Heller lattices by Theorem \ref{nonperiodic Heller} and Proposition \ref{indec of Heller}. Let $0\to\tau E_2 \to Z_1^{\infty} \oplus W \to E_2 \to 0$ be the almost split sequence ending at $E_2$.
By applying $-\otimes\kappa$, the induced sequence 
\[ 0\to\tau E_2\otimes\kappa \to Z_1^{\infty}\otimes\kappa \oplus W\otimes\kappa \to E_2\otimes\kappa \to 0 \]
splits, which {contradicts Proposition \ref{indec of Heller} (4)}. \end{proof}

By the definition of {$E_2^{\infty}$}, we have
\begin{align*}
E_2^{\infty} = &\O(\varepsilon e_2-Xe_1)\oplus\O(Xe_2)\oplus \O (Xe_3-Ye_2) \oplus \O(XYe_2) \\
                     & \oplus \O(\varepsilon e_3+Xe_4-Ye_1)\oplus\O(\varepsilon Xe_3-XY e_1) \oplus\O(\varepsilon Ye_3+XY e_4) \oplus\O(XYe_3)\\
                     & \oplus \O\sf{b}_{1,1}\oplus \O\sf{b}_{1,2} \oplus \O\sf{b}_{1,3}\oplus\O\sf{b}_{1,4}\\ 
                     & \oplus \O(\sf{b}_{2,1}-Ye_3)\oplus \O\sf{b}_{2,2} \oplus \O\sf{b}_{2,3}\oplus\O\sf{b}_{2,4}.\end{align*} 
                     
\begin{lemma}\label{E(2,inf)}
The following statements hold.
\begin{enumerate}[(1)]
\item There is an isomorphism $E_2^{\infty}\otimes\kappa\simeq \oplus M(\infty)_2^{\oplus 2}\oplus M(\infty)_1\oplus M(\infty)_3$.
\item We have an isomorphism $({\tau} E_2^{\infty})\otimes\kappa\simeq M(\infty)_2^{\oplus 2}\oplus M(\infty)_1\oplus M(\infty)_3$. 
\item $E_2^{\infty}$ is a non-projective indecomposable $A$-lattice.
\end{enumerate}
\end{lemma}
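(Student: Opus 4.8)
The plan for (1) is to mimic the proofs of Lemma~\ref{E(1,inf)}(2) and Lemma~\ref{E lambda}(1): I would exhibit four $\overline{A}$-submodules of $E_2^{\infty}\otimes\kappa$, each spanned by a suitable subset of the reductions of the displayed $\mathcal{O}$-basis of $E_2^{\infty}$ and closed under the reduced $X$- and $Y$-actions recorded in Notation~\ref{notation2}, isomorphic respectively to $M(\infty)_1$, $M(\infty)_3$ and two copies of $M(\infty)_2$, and whose direct sum is all of $E_2^{\infty}\otimes\kappa$; checking this is a routine matrix computation. Non-projectivity of $E_2^{\infty}$ is then immediate, since no $M(\infty)_k$ is isomorphic to the projective-injective $\overline{A}$. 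For (2), I would iterate Lemma~\ref{proj cover}: the component of $E_2^{\infty}$ in $\Gamma_s(A)$ is stable, so each $\tau^{i}E_2^{\infty}$ is non-projective and hence $\tau^{n}E_2^{\infty}\otimes\kappa\simeq\widetilde{\Omega}^{\,n}(E_2^{\infty}\otimes\kappa)$; by Lemma~\ref{proj1} the syzygy functor $\widetilde{\Omega}$ fixes each $M(\infty)_k$, so combining this with (1) gives the assertion.

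For (3) the strategy is the one used in Lemma~\ref{E lambda}(3) and Lemma~\ref{E(1,inf)}(4). Suppose $E_2^{\infty}=\bigoplus_jE_j$ is a nontrivial decomposition into indecomposables. Since $E_2^{\infty}\otimes\mathcal{K}$ is free over $A\otimes\mathcal{K}$, each $\rank(E_j)$ is divisible by $4$; using Krull--Schmidt on the reduction from (1), and that $M(\infty)_1$ and $M(\infty)_3$ each occur there with multiplicity one, a dimension count shows that either some indecomposable summand $E'$ satisfies $E'\otimes\kappa\simeq M(\infty)_1\oplus M(\infty)_3$, or $E_2^{\infty}=E_1\oplus E_2$ with $\rank(E_1)=4$, $E_1\otimes\kappa\simeq M(\infty)_2$, $\rank(E_2)=12$ and $E_2\otimes\kappa\simeq M(\infty)_2\oplus M(\infty)_1\oplus M(\infty)_3$. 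In the first case, $E'$ is not a direct summand of any Heller lattice (the Heller lattices of $A$ are the indecomposable $Z_m$ and $Z_n^{\lambda}$, whose reductions either involve a string module or have the form $M(\infty)_k^{\oplus 2}$), so Proposition~\ref{split} applies to the almost split sequence ending at $E'$. Since $E'$ is a direct summand of the middle term $E_2^{\infty}$ of the almost split sequence ending at $Z_2^{\infty}$ and $\tau Z_2^{\infty}\simeq Z_2^{\infty}$ (Proposition~\ref{notuse}(2)), the identity $x^{-}=(\tau x)^{+}$ for stable translation quivers forces $Z_2^{\infty}$ to be a direct summand of the middle term of the almost split sequence ending at $E'$; tensoring the (split) reduced sequence with $\kappa$ and using $Z_2^{\infty}\otimes\kappa\simeq M(\infty)_2^{\oplus 2}$ (Proposition~\ref{indec of Heller}(4)) together with $\tau E'\otimes\kappa\simeq\widetilde{\Omega}(M(\infty)_1\oplus M(\infty)_3)\simeq M(\infty)_1\oplus M(\infty)_3$ would then make $M(\infty)_2$ a summand of $M(\infty)_1^{\oplus 2}\oplus M(\infty)_3^{\oplus 2}$, which is absurd.

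It remains to exclude the decomposition $E_2^{\infty}=E_1\oplus E_2$ of rank $4+12$, which is the main obstacle. Here $E_2$ is again not a summand of any Heller lattice, so Proposition~\ref{split} applies to the almost split sequence ending at $E_2$; a rank count (with $\rank(\tau E_2)=\dim_\kappa(\tau E_2\otimes\kappa)=12$) together with Proposition~\ref{indec of Heller}(4) shows its middle term is $Z_2^{\infty}\oplus W$ with $W\otimes\kappa\simeq M(\infty)_1^{\oplus 2}\oplus M(\infty)_3^{\oplus 2}$. I would then decompose $W$ into indecomposables and repeat the argument of the previous paragraph with $\tau E_2$ (whose reduction contains $M(\infty)_2$) in place of $Z_2^{\infty}$: any indecomposable summand of $W$ that is not a Heller lattice forces, via Proposition~\ref{split} and Lemmas~\ref{proj cover} and \ref{proj1}, the module $M(\infty)_2$ to be a summand of a sum of copies of $M(\infty)_1$ and $M(\infty)_3$, a contradiction; so every indecomposable summand of $W$ is a Heller lattice, and the admissible reductions force $W\simeq Z_1^{\infty}\oplus Z_3^{\infty}$. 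Finally, $Z_1^{\infty}$ is a direct summand of the middle term of the almost split sequence ending at $E_2$, so by $x^{-}=(\tau x)^{+}$ and $\tau Z_1^{\infty}\simeq Z_1^{\infty}$ the lattice $\tau E_2$ is a direct summand of the middle term $E_1^{\infty}$ of the almost split sequence ending at $Z_1^{\infty}$; but $E_1^{\infty}$ is indecomposable by Lemma~\ref{E(1,inf)}(4), so $\tau E_2\simeq E_1^{\infty}$ and hence $12=\rank(\tau E_2)=\rank(E_1^{\infty})=8$, a contradiction. Therefore $E_2^{\infty}$ is indecomposable.

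The delicate point in (3) is that a direct appeal to Proposition~\ref{split} yields a contradiction only for summands whose reduction omits $M(\infty)_2$; the rank-$4$ summand in the surviving case reduces exactly to $M(\infty)_2$ and so is not excluded at that level, which is why the argument must pass to the rank-$12$ summand, analyse its almost split sequence, and be closed off only by the previously established indecomposability of $E_1^{\infty}$ and a rank comparison.
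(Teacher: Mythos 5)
Your proof of (1) and (2) follows the paper's route exactly: exhibit four explicit $\overline{A}$-submodules of $E_2^{\infty}\otimes\kappa$ (the paper names them $E(\infty,2)_1,\dots,E(\infty,2)_4$, isomorphic to $M(\infty)_2$, $M(\infty)_2$, $M(\infty)_3$, $M(\infty)_1$), and then apply Lemmas~\ref{proj cover} and \ref{proj1} for (2). No divergence there.

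For (3) you take a genuinely more careful route, and you are right to. The paper's proof simply says ``we may assume $E_1\otimes\kappa\simeq M(\infty)_2^{\oplus 2}$ and $E_2\otimes\kappa\simeq M(\infty)_1\oplus M(\infty)_3$'' and then runs the Proposition~\ref{split} argument. But as you observe, the rank-divisibility constraint ($\rank$ of each summand divisible by $4$) only forces $M(\infty)_1$ and $M(\infty)_3$ into the same indecomposable summand; it does \emph{not} force both copies of $M(\infty)_2$ out of it. The rank $4+12$ decomposition $E_1\otimes\kappa\simeq M(\infty)_2$, $E_2\otimes\kappa\simeq M(\infty)_2\oplus M(\infty)_1\oplus M(\infty)_3$ is consistent with all the constraints used in the paper, and for it the paper's computation yields $M(\infty)_2^{\oplus 2}\oplus(W\otimes\kappa)\simeq M(\infty)_2^{\oplus 2}\oplus M(\infty)_1^{\oplus 2}\oplus M(\infty)_3^{\oplus 2}$ --- no contradiction, only $W\otimes\kappa\simeq M(\infty)_1^{\oplus 2}\oplus M(\infty)_3^{\oplus 2}$. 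So the paper's argument, as written, does not close this case. Your second paragraph supplies the missing step: you analyse $W$, show via the same split-sequence trick that each indecomposable summand of $W$ must be a Heller lattice (otherwise $M(\infty)_2$ would be forced into a sum of $M(\infty)_1$'s and $M(\infty)_3$'s), conclude $W\simeq Z_1^{\infty}\oplus Z_3^{\infty}$, and then use $\tau Z_1^{\infty}\simeq Z_1^{\infty}$ together with the already-established indecomposability of $E_1^{\infty}$ (Lemma~\ref{E(1,inf)}(4)) and a rank count ($\rank(\tau E_2)=12\neq 8=\rank(E_1^{\infty})$) to reach the contradiction. I checked the translation-quiver bookkeeping ($W'\in E_2^{-}=(\tau E_2)^{+}$, hence $\tau E_2\in (W')^{-}$; and $Z_1^{\infty}\in E_2^{-}=(\tau E_2)^{+}$, hence $\tau E_2\in (Z_1^{\infty})^{-}$) and it is all correct. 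In short, your argument is correct and it repairs a real gap in the published proof of part (3); the extra price you pay is one additional pass through Proposition~\ref{split} and the earlier indecomposability result for $E_1^{\infty}$.
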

\begin{proof} (1) We put 
\begin{align*}
& E(\infty,2)_1:=\sf{Span}_{\kappa}\{(\varepsilon e_2-Xe_1), (\varepsilon Xe_3-XY e_1), (\varepsilon e_3+Xe_4-Ye_1), (\varepsilon Ye_3+XY e_4) \}, \\
& E(\infty,2)_2:=\sf{Span}_{\kappa}\{\sf{b}_{1,2}, \sf{b}_{1,3}, \sf{b}_{1,4}, \sf{b}_{2,4}\}, \\
& E(\infty,2)_3:=\sf{Span}_{\kappa}\{ (Xe_2), (Xe_3-Ye_2-\sf{b}_{1,1}), (XYe_2), (XYe_3-\sf{b}_{2,2}), (\sf{b}_{2,1}-Ye_3), \sf{b}_{2,3} \}\\
& E(\infty,2)_4:=\sf{Span}_{\kappa}\{ \sf{b}_{1,1}, \sf{b}_{2,2} \} 
\end{align*}
Then, it is easy to check that $E(\infty,2)_1\simeq E(\infty,2)_2\simeq M(\infty)_2$, $E(\infty,2)_3\simeq M(\infty)_3$ and $E(\infty,2)_4\simeq M(\infty)_1$.

(2) This follows from Lemmas \ref{proj cover}, \ref{proj1} and the statement (1).

(3) Suppose that $E_2^{\infty}$ is decomposable. We write $E_2^{\infty}\simeq E_1\oplus E_2$ as $A$-lattices with $E_1\neq 0\neq E_2$. Then, we may assume that $E_1\otimes\kappa\simeq M(\infty)_2^{\oplus 2}$ and $E_2\otimes\kappa\simeq M(\infty)_1\oplus M(\infty)_3$.
Note that the $A$-lattice $E_2$ is not isomorphic to any Heller lattices, and it is indecomposable. 
Let $0\to \tau E_2\to Z_2^{\infty}\oplus W \to E_2\to 0$ be the almost split sequence ending at $E_2$. 
{By applying $-\otimes\kappa$, the induced sequence 
\[  0\to \tau E_2\otimes\kappa\to (Z_2^{\infty}\otimes\kappa)\oplus (W\otimes\kappa)\to E_2\otimes\kappa \to 0 \]
splits, which contradicts Proposition \ref{indec of Heller} (4)}. 
\end{proof}

From now on, we assume that $n>2$. 
Then, an $\mathcal{O}$-basis of the $A$-lattice $E_n^{\infty}$ is given as follows:
\begin{align*}
E_n^{\infty} = &\ \mathcal{O}(\varepsilon e_2-Xe_1)\oplus \mathcal{O}(Xe_2)\oplus \mathcal{O}(Ye_2-Xe_3)\oplus \mathcal{O}(XYe_2) \\
& \oplus\mathcal{O}(\varepsilon e_3+X{e_4}-Ye_1) \oplus \mathcal{O}(\varepsilon Xe_3-XYe_1)\oplus\mathcal{O}(Ye_3+Xe_5)\oplus\mathcal{O}(XYe_3) \\
&\bigoplus _{k=1}^{n-3}\bigg(\mathcal{O}(\varepsilon e_{2k+3}+ Xe_{2k+4}-Ye_{2k+2}) \oplus\mathcal{O}(\varepsilon Xe_{2k+3}-XYe_{2k+2})\\
&\quad\quad\quad   \oplus\mathcal{O}(Ye_{2k+3}+Xe_{2k+5})\oplus\mathcal{O}(XYe_{2k+3})\bigg) \\
& \oplus\mathcal{O}(\varepsilon e_{2n-1}+Xe_{2n}-Ye_{2n-2})\oplus\mathcal{O}(\varepsilon Xe_{2n-1}-XYe_{2n-2}) \\
&\oplus\mathcal{O}(\varepsilon Ye_{2n-1}+XYe_{2n}) \oplus\mathcal{O}(XYe_{2n-1}) \\
& \bigoplus_{{k=1}}^{{n-1}}\bigg(\mathcal{O}\sf{b}_{k,1} \oplus \mathcal{O}\sf{b}_{k,2} \oplus {\mathcal{O}}\sf{b}_{k,3}\oplus \mathcal{O}\sf{b}_{k,4}\bigg)\\
& \oplus \mathcal{O}(\sf{b}_{n,1}-Y{e_{2n-1}}) \oplus\mathcal{O}\sf{b}_{n,2}\oplus \mathcal{O}\sf{b}_{n,3}\oplus\mathcal{O}\sf{b}_{n,4} 
\end{align*}
{Here, if $n=3$, then we understand that 
\[ \bigoplus _{k=1}^{0}\bigg(\mathcal{O}(\varepsilon e_{2k+3}+ Xe_{2k+4}-Ye_{2k+2}) \oplus\cdots\oplus\mathcal{O}(XYe_{2k+3})\bigg) =0. \]
}
\begin{lemma}\label{Middle E 1} {For $n> 2$,} the following statements hold.
\begin{enumerate}[(1)]
\item There is an isomorphism $E_n^{\infty}\otimes\kappa\simeq  M(\infty)_n^{\oplus 2}\oplus M(\infty)_{n+1}\oplus M(\infty)_{n-1}$.
\item We have an isomorphism $(\tau E_n^{\infty})\otimes\kappa\simeq M(\infty)_{n}^{\oplus 2}\oplus M(\infty)_{n-1}\oplus M(\infty)_{n+1}$. 
\item $E_n^{\infty}$ is a non-projective indecomposable $A$-lattice.
\end{enumerate}
\end{lemma}
\begin{proof}
(1) 
We define $\overline{A}$-submodules of $E_n^{\infty}\otimes\kappa$ as follows.
\begin{align*} 
&E(\infty,n)_1:=\sf{Span}_{\kappa}\left\{\left.
\begin{array}{l}
(\varepsilon e_2-Xe_1),\ (\varepsilon Xe_3-XYe_1)\\
{(\varepsilon e_3+X{e_4}-Ye_1)} \\
(\varepsilon e_{2k+1}+Xe_{2k+2}-Y{e_{2k}}), \\
(\varepsilon Xe_{2l+3}-XYe_{2l+2}),\\
(\varepsilon Ye_{2n-1}+XYe_{2n}) \end{array}
\right|\ \begin{array}{l}
{k=2,\ldots, n-1},\\
 l=1,\ldots, n-2\end{array}\right\} \\
\\
&E(\infty,n)_2:=\sf{Span}_{\kappa}\left\{
\begin{array}{l}
\sf{b}_{1,2},\ \sf{b}_{k,3},\ \sf{b}_{l,4}
\end{array}
\left|\ \begin{array}{l}
k=1,\ldots, n-1,\\
l=1,\ldots, n\end{array}\right\}\right. \\
&E(\infty,n)_3:=\sf{Span}_{\kappa}\left\{\left.
\begin{array}{l}
Xe_2,\ XYe_2, \\
(Ye_2-Xe_3{-\sf{b}_{1,1}}),\\
(Ye_{2k+1}+Xe_{2k+3}{-\sf{b}_{k+1,1}}),\\
 (XYe_{2l+1}-\sf{b}_{l+1,2}),\\
(\sf{b}_{n,1}-Ye_{2n-1}),\ \sf{b}_{n,3}\\
\end{array} \right|\ \begin{array}{l}
k=1,\ldots, n-2,\\
l=1,\ldots,n-1\\
\end{array}\right\}\\
&E(\infty,n)_4:=\sf{Span}_{\kappa}\left\{
\begin{array}{l}
\sf{b}_{s,1},\ \sf{b}_{t,2}
\end{array} \left|\ 
\begin{array}{l}
s=1,\ldots ,n-1,\\
t=2,\ldots ,n\end{array}\right\}\right.
\end{align*}
Then, it is easy to check that
\begin{align*}
& E_n^{\infty}\otimes\kappa=E(\infty,n)_1\oplus E(\infty,n)_2\oplus E(\infty,n)_3\oplus E(\infty,n)_4,\\
& E(\infty,n)_1\simeq  E(\infty,n)_2\simeq M(\infty)_n,\\
& E(\infty,n)_3\simeq M(\infty)_{n+1}, \\
& E(\infty,n)_4\simeq M(\infty)_{n-1}.
\end{align*}

(2) This follows from Lemmas \ref{proj cover}, \ref{proj1} and the statement (1).

(3)  We can prove the indecomposability of $E_n^{\lambda}$ by using {arguments similar to those in} the proof of the case $\lambda\neq \infty$.
\end{proof}

\begin{corollary}\label{cor2}
$\mathcal{CH}(Z_n^{\infty})\neq \mathcal{CH}(Z_m^{\infty})$ whenever $n\neq m$. Moreover, $\mathcal{CH}(Z_n^{\infty})$ has no loops.
\end{corollary}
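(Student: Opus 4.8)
The plan is to read everything off three facts that are already available: $Z_n^{\infty}$ is $\tau$-stable with $\tau Z_n^{\infty}\simeq Z_n^{\infty}$ (Proposition \ref{notuse}(2)); the almost split sequence ending at $Z_n^{\infty}$ has the \emph{indecomposable} middle term $E_n^{\infty}$ (Proposition \ref{Middle E} and Lemma \ref{Middle E 1}(3)); and $\mathrm{rank}\,Z_n^{\infty}=4n$, $\mathrm{rank}\,E_n^{\infty}=8n$, which come from Proposition \ref{indec of Heller}(4) and Lemma \ref{Middle E 1}(1). The first step is the observation that $Z_n^{\infty}$ has exactly one neighbour in the stable Auslander--Reiten quiver, namely $E_n^{\infty}$: arrows into $Z_n^{\infty}$ correspond to the direct summands of $E_n^{\infty}$, which is indecomposable, and since $\tau Z_n^{\infty}\simeq Z_n^{\infty}$ the almost split sequence starting at $Z_n^{\infty}$ again has middle term $E_n^{\infty}$, so arrows out of $Z_n^{\infty}$ also go only to $E_n^{\infty}$. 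In particular there is no loop at $Z_n^{\infty}$, because a loop would force $Z_n^{\infty}$ to be a direct summand of $E_n^{\infty}$, which is impossible as $\mathrm{rank}\,E_n^{\infty}=8n\neq 4n=\mathrm{rank}\,Z_n^{\infty}$.

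Next I would settle the ``no loops'' assertion. The component $\mathcal{CH}(Z_n^{\infty})$ is periodic because it contains the $\tau$-stable vertex $Z_n^{\infty}$, and it has infinitely many vertices by Lemma \ref{infinitely many}, so Proposition \ref{periodic_case} applies. If $\mathcal{CH}(Z_n^{\infty})$ had a loop, then by Proposition \ref{periodic_case}(1) we would have $\mathcal{CH}(Z_n^{\infty})\setminus\{\text{loops}\}\simeq \mathbb{Z}A_{\infty}/\langle\tau\rangle$ with the loop on the boundary, and the boundary of $\mathbb{Z}A_{\infty}/\langle\tau\rangle$ is the unique vertex having only one neighbour. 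By the first step that vertex must be $Z_n^{\infty}$, which carries no loop; this contradiction proves that $\mathcal{CH}(Z_n^{\infty})$ has no loops.

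For the distinctness statement, suppose $\mathcal{CH}(Z_n^{\infty})=\mathcal{CH}(Z_m^{\infty})=:\mathcal{C}$ with $n\neq m$. By the previous step $\mathcal{C}$ has no loops, so by Proposition \ref{periodic_case}(2) and Theorem \ref{Ried} we may write $\mathcal{C}\simeq\mathbb{Z}T/G$ with $\overline{T}$ an infinite Dynkin diagram and $G\subseteq\mathsf{Aut}_{\tau}(\mathbb{Z}T)$ admissible. Both $Z_n^{\infty}$ and $Z_m^{\infty}$ are $\tau$-stable and each has exactly one neighbour, namely $E_n^{\infty}$, resp. $E_m^{\infty}$, and $E_n^{\infty}\not\simeq E_m^{\infty}$ since their ranks are $8n$ and $8m$. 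I would then run through the possible $\overline{T}$. The diagram $A_{\infty}^{\infty}$ has no vertex with a single neighbour and is excluded. For $A_{\infty}$, $B_{\infty}$, $C_{\infty}$ the underlying graph has no nontrivial automorphism, so $\mathsf{Aut}_{\tau}(\mathbb{Z}T)=\langle\tau\rangle$, and the existence of a $\tau$-stable vertex (and freeness of the $\tau$-action on $\mathbb{Z}T$) forces $G=\langle\tau\rangle$; then $\mathbb{Z}T/G$ has a \emph{unique} one-neighbour vertex, so $Z_n^{\infty}=Z_m^{\infty}$, i.e. $n=m$. For $D_{\infty}$, $\mathsf{Aut}_{\tau}(\mathbb{Z}D_{\infty})$ is generated by $\tau$ and the swap $\sigma$ of the two fork tines; inspecting which admissible $G$ admit a $\tau$-stable one-neighbour vertex forces $G=\langle\tau\rangle$ or $G=\langle\tau,\sigma\rangle$. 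In the second case there is again a unique such vertex, so $n=m$; in the first case the two one-neighbour vertices $Z_n^{\infty},Z_m^{\infty}$ have the \emph{same} neighbour, namely the image of the fork vertex, whence $E_n^{\infty}\simeq E_m^{\infty}$, contradicting $8n\neq8m$. In every case $n=m$, a contradiction, which completes the proof.

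The routine ingredients (the two rank computations and the basis descriptions of $E_n^{\infty}$) are already in place in Lemma \ref{Middle E 1} and Proposition \ref{indec of Heller}, so the only genuine work lies in the last paragraph, and within it the $D_{\infty}$ case is the main obstacle: one has to check carefully that no admissible subgroup of $\mathsf{Aut}_{\tau}(\mathbb{Z}D_{\infty})$ which identifies the two tine-orbits can fix a tine-vertex unless it already contains $\langle\tau\rangle$. This is elementary once one writes $\mathsf{Aut}_{\tau}(\mathbb{Z}D_{\infty})\cong\mathbb{Z}\times(\mathbb{Z}/2)$ explicitly and uses that $\tau$ acts freely on $\mathbb{Z}D_{\infty}$, but it is the one spot where the bookkeeping needs to be done with some care.
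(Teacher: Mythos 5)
Your proposal is correct. The paper offers no explicit proof of this corollary --- it is stated as an immediate consequence of Lemmas \ref{E(1,inf)}, \ref{E(2,inf)} and \ref{Middle E 1} --- so the fair comparison is whether your argument is a sound way of filling the gap, and it is. The no-loop argument (indecomposability of $E_n^{\infty}$ plus the rank mismatch $8n\neq 4n$ forces $Z_n^{\infty}$ to be a loop-free one-neighbour vertex, and Proposition \ref{periodic_case}(1) then rules out a loop anywhere) is exactly the reasoning that the placement of the corollary suggests, and it mirrors the (also unproved) Corollary \ref{cor3}.

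For the distinctness you are doing genuinely more work than the paper implicitly asks for: since Theorem \ref{main2} (which uses only the no-loop half of the corollary) will identify $\mathcal{CH}(Z_n^{\infty})\simeq\mathbb{Z}A_{\infty}/\langle\tau\rangle$ with $Z_n^{\infty}$ the unique boundary vertex, distinctness then drops out in one line. You instead prove it at the earlier stage by running through the possible tree classes, which is self-contained and valid. One small inaccuracy in the $D_\infty$ case: $G=\langle\tau,\sigma\rangle$ is in fact \emph{not} admissible, since $\sigma$ interchanges the two tine vertices $(k,1)$ and $(k,1')$, which lie in a common $x^{-}$ (namely $x$ the fork vertex at an adjacent level), violating the admissibility condition. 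So that branch is vacuous. This does not harm your argument, since you show that branch would also give $n=m$, but it does mean that once one observes that a $\tau$-fixed tine vertex forces $\tau\in G$ and that admissibility forbids $\sigma\in G$, the only possibility in the $D_\infty$ case is $G=\langle\tau\rangle$, where the two boundary vertices share the neighbour $E_n^{\infty}=E_m^{\infty}$, contradicting the ranks. In short: correct proof, more explicit and slightly longer than what the paper intends, with a harmless slip about which subgroups of $\mathsf{Aut}_\tau(\mathbb{Z}D_\infty)$ are admissible.
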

\begin{proof}
{By Lemmas \ref{E(1,inf)}, \ref{E(2,inf)} and \ref{Middle E 1} imply that every Heller lattice $Z_{n}^{\infty}$ appears on the boundary of $\mathcal{CH}(Z_{n}^{\infty})$.}
\end{proof}

\subsection{The Heller component containing \mbox{\boldmath $Z_n^{\infty}$}}\label{HCn}

\begin{theorem}\label{main2}
Let $\mathcal{O}$ be a complete discrete valuation ring, $\kappa$ its residue field and $A=\mathcal{O}[X,Y]/(X^{2},Y^{2})$.
Assume that $\kappa$ is algebraically closed. Then, $\mathcal{CH}(Z_{n}^{\infty})\simeq \mathbb{Z}A_{\infty}/\langle \tau \rangle$. Moreover, the Heller lattice $Z_{n}^{\infty}$ appears on the boundary of $\mathcal{CH}(Z_{n}^{\infty})$.
\end{theorem}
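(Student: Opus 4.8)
The plan is to follow the proof of Theorem~\ref{main1} with only one structural change: here $\tau Z_n^\infty\simeq Z_n^\infty$ by Proposition~\ref{notuse}(2), rather than $\tau Z_n^\lambda\simeq Z_n^{-\lambda}$, so $Z_n^\infty$ is $\tau$-fixed and the admissible quotient will be by $\langle\tau\rangle$ instead of $\langle\tau^2\rangle$. First I would observe that $\mathcal{CH}(Z_n^\infty)$ is periodic (it contains the $\tau$-fixed vertex $Z_n^\infty$), has no loops by Corollary~\ref{cor2}, and has infinitely many vertices by Lemma~\ref{infinitely many}; hence Proposition~\ref{periodic_case}(2) applies and the tree class $\overline{T}$ of $\mathcal{CH}(Z_n^\infty)$ is one of $A_\infty$, $B_\infty$, $C_\infty$, $D_\infty$, $A_\infty^\infty$. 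Next, by Lemma~\ref{Middle E 1}(3) the middle term $E_n^\infty$ of the almost split sequence ending at $Z_n^\infty$ (Proposition~\ref{Middle E}) is indecomposable, so $Z_n^\infty$ has a single neighbour $E_n^\infty$ in $\mathcal{CH}(Z_n^\infty)$, i.e.\ $Z_n^\infty$ is a leaf of $\overline{T}$. In particular $Z_n^\infty$ lies on the boundary of $\mathcal{CH}(Z_n^\infty)$, which settles the last assertion, and $\overline{T}\neq A_\infty^\infty$.

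Then I would analyse the layer beyond $E_n^\infty$, exactly as in Theorem~\ref{main1}. Let $F$ be the middle term of the almost split sequence ending at $E_n^\infty$. From $\tau Z_n^\infty\simeq Z_n^\infty$ and $(Z_n^\infty)^-=\{E_n^\infty\}$ one gets $(Z_n^\infty)^+=\{E_n^\infty\}$, so $Z_n^\infty$ is a direct summand of $F$; write $F=Z_n^\infty\oplus F_n^\infty$. By Lemma~\ref{Middle E 1}(1),(2) both $E_n^\infty$ and $\tau E_n^\infty$ reduce to $M(\infty)_n^{\oplus2}\oplus M(\infty)_{n-1}\oplus M(\infty)_{n+1}$, which is the reduction of no Heller lattice (compare Theorem~\ref{nonperiodic Heller}(1) and Proposition~\ref{indec of Heller}(3),(4)); hence $\tau E_n^\infty$ is not a direct summand of a Heller lattice, and Proposition~\ref{split} applied to $0\to\tau E_n^\infty\to F\to E_n^\infty\to0$ gives
\[
F\otimes\kappa \;\simeq\; (\tau E_n^\infty\otimes\kappa)\oplus(E_n^\infty\otimes\kappa) \;\simeq\; M(\infty)_n^{\oplus4}\oplus M(\infty)_{n-1}^{\oplus2}\oplus M(\infty)_{n+1}^{\oplus2}.
\]
Cancelling $Z_n^\infty\otimes\kappa\simeq M(\infty)_n^{\oplus2}$ (Proposition~\ref{indec of Heller}(4)) yields $F_n^\infty\otimes\kappa\simeq M(\infty)_n^{\oplus2}\oplus M(\infty)_{n-1}^{\oplus2}\oplus M(\infty)_{n+1}^{\oplus2}$.

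The crux, and the step I expect to be the main obstacle, is to prove that $F_n^\infty$ is indecomposable; I would carry this out by the case analysis used for Theorem~\ref{main1}. Suppose $F_n^\infty$ decomposes. Every $A$-lattice in $\mathsf{latt}^{(\natural)}$-$A$ has rank divisible by $4$ (its scalar extension is a projective $A\otimes\mathcal{K}$-module, and $A\otimes\mathcal{K}$ has a unique indecomposable projective, of $\mathcal{K}$-dimension $4$), so there is an indecomposable summand $W$ of $F_n^\infty$; since $W$ occurs in $F$ there is an arrow $W\to E_n^\infty$, hence an arrow $\tau E_n^\infty\to W$, and a rank count as in Theorem~\ref{main1} forces the almost split sequence ending at $W$ to be $0\to\tau W\to\tau E_n^\infty\to W\to0$ with $\operatorname{rank}(W)=4n$ and $W\otimes\kappa\simeq M(\infty)_n^{\oplus2}$. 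If $W$ is not a Heller lattice, Proposition~\ref{split} forces $0\to\tau W\otimes\kappa\to\tau E_n^\infty\otimes\kappa\to W\otimes\kappa\to0$ to split; but $\tau W\otimes\kappa\simeq M(\infty)_n^{\oplus2}$ by Lemmas~\ref{proj cover} and~\ref{proj1}, whereas $\tau E_n^\infty\otimes\kappa$ has summands $M(\infty)_{n-1},M(\infty)_{n+1}$, a contradiction. If $W$ is a Heller lattice then $W\simeq Z_n^\infty$, and $F_n^\infty/W$ is a non-Heller indecomposable $A$-lattice (the indecomposability being shown as in Theorem~\ref{main1}) with $(F_n^\infty/W)\otimes\kappa\simeq M(\infty)_{n-1}^{\oplus2}\oplus M(\infty)_{n+1}^{\oplus2}$; the almost split sequence ending at $F_n^\infty/W$ involves $\tau E_n^\infty$ in its middle term and, by Proposition~\ref{split}, splits after $-\otimes\kappa$, which again contradicts the presence of $M(\infty)_n$ in $\tau E_n^\infty\otimes\kappa$ (treating $n$ odd and $n$ even separately as needed). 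Hence $F_n^\infty$ is indecomposable. Now the middle term of the almost split sequence ending at $E_n^\infty$ is $Z_n^\infty\oplus F_n^\infty$ with $Z_n^\infty$ occurring once, and the almost split sequence starting at $Z_n^\infty$ coincides with the one ending at $Z_n^\infty$ (both are determined by the first term $Z_n^\infty$, since $\tau Z_n^\infty=Z_n^\infty$), whose middle term $E_n^\infty$ is indecomposable; therefore the edge between $Z_n^\infty$ and $E_n^\infty$ has the trivial valuation $(1,1)$, and together with the indecomposability of $F_n^\infty$ this excludes $B_\infty$, $C_\infty$ and $D_\infty$, so $\overline{T}=A_\infty$. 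Finally $\mathsf{Aut}(\mathbb{Z}A_\infty)=\langle\tau\rangle$, as $A_\infty$ admits no non-trivial graph automorphism, and its non-trivial admissible subgroups are the $\langle\tau^m\rangle$ with $m\ge1$; since $Z_n^\infty$ is $\tau$-fixed we must have $m=1$, so $\mathcal{CH}(Z_n^\infty)\simeq\mathbb{Z}A_\infty/\langle\tau\rangle$.
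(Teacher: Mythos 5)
Your proof is correct and follows essentially the same approach as the paper: identify $Z_n^\infty$ as a boundary vertex via the indecomposability of $E_n^\infty$, restrict the tree class using Proposition \ref{periodic_case}, then rule out all but $A_\infty$ by showing the middle term $F$ of the almost split sequence ending at $E_n^\infty$ decomposes as $Z_n^\infty\oplus F_n^\infty$ with $F_n^\infty$ indecomposable, via the same split/non-Heller dichotomy from Proposition \ref{split}. The only differences are cosmetic refinements in your favour: you consistently write $\tau E_n^\infty$ where the paper's proof writes $E_n^\infty$ in the almost split sequence ending at $W$ (harmless in the paper since only the $\kappa$-reduction is used, and Lemma \ref{Middle E 1}(2) shows the reductions coincide, but your phrasing is the precise one), and you spell out the final step identifying the admissible group as $\langle\tau\rangle$ from the $\tau$-fixedness of $Z_n^\infty$, which the paper leaves implicit.
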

\begin{proof}

It follows from Proposition \ref{periodic_case} and Lemma \ref{infinitely many} that the tree class $\overline{T}$ of $\mathcal{CH}(Z_n^{\infty})$ is one of $A_{\infty}$, $B_{\infty}$, $C_{\infty}$, $D_{\infty}$ or $A_{\infty}^{\infty}$.  

Let $F$ be the middle term of the almost split sequence ending at $E_n^{\infty}$. 
Then, $F$ is the direct sum of $Z_{n}^{\infty}$ and an $A$-lattice $F_n^{\infty}$. 
By Propositions \ref{split}, {\ref{indec of Heller} and  Lemmas \ref{E(1,inf)}, \ref{E(2,inf)} and \ref{Middle E 1}}, we have 
\[ F_n^{\infty}\otimes\kappa\simeq M(\infty)_{n+1}^{\oplus 2}\oplus M(\infty)_{n-1}^{\oplus 2}\oplus M(\infty)_n^{\oplus 2}, \]
{where $M(\infty)_0=0$.
Note that $F_n^\infty$ has at most two indecomposable direct summands since $\overline{T}$ is one of infinite Dynkin diagrams.
By indecomposablitity of $E_n^\infty$, we know that $\overline{T}$ is neither $B_\infty$ nor $A_\infty^\infty$.}

{Suppose that $\overline{T}=C_\infty$.
Then, it implies from Propositions \ref{split}, {\ref{indec of Heller} and  Lemmas \ref{E(1,inf)}, \ref{E(2,inf)} and \ref{Middle E 1}} that $F_n^\infty=Z_n^\infty\oplus \widetilde{F}_n^\infty$, where
\[ \widetilde{F}_n^\infty \simeq M(\infty)_{n-1}^{\oplus 2}\oplus M(\infty)_{n+1}^{\oplus 2}. \] 
On the other hand, the middle term of the almost split {sequence} ending at $\widetilde{F}_n^\infty$ has $E_n^\infty$ as a direct summand.
This contradicts Proposition \ref{split}.}

{Next, we suppose that $\overline{T}=D_\infty$. In this case, }
there is an indecomposable direct summand $W$ of $F_n^{\infty}$ such that the almost split sequence ending at $W$ is of the form $0\to \tau W\to E_n^{\infty}\to W\to 0$. 
Then, the induced exact sequence 
\[ 0\to \tau W\otimes\kappa\to E_n^{\infty}\otimes\kappa\to W\otimes\kappa\to 0 \]
splits. However, this situation does not occur for any $W$. 

Therefore, $F^{\infty}_n$ is an indecomposable
$A$-lattice, and $T=A_{\infty}$.
\end{proof}

\bibliographystyle{amsalpha}

\end{document}